\newtheorem{thm}{Theorem}[section]  
\newtheorem{prop}{Proposition}[section]  
\newtheorem{lem}{Lemma}[section]  
\newtheorem{remark}{Remark}[section]
\def\Id{{\rm Id}\,}
\def\dN{\delta\!\wt N}
\def\d{\partial}
\def\ddj{\dot \Delta_j}
\newcommand{\with}{\quad\hbox{with}\quad}
\newcommand{\andf}{\quad\hbox{and}\quad}
\def\wh{\widehat}
\def\wt{\widetilde}
\newcommand\C{\mathbb{C}}
\newcommand\R{\mathbb{R}}
\renewcommand\S{\mathbb{S}}
\newcommand\Z{\mathbb{Z}}
\newcommand{\N}{\mathbb{N}}
\newcommand{\ep}{\varepsilon}
\DeclareMathOperator{\Supp}{Supp}
\renewcommand{\Re}{\,{\rm{Re}}\,}
\renewcommand{\Im}{\,{\rm{Im}}\,}
\def\sb{\smallbreak}
\def\mb{\medbreak}
\def\divx{\, \hbox{\rm div}_x\,  }
\renewcommand{\div}{\mbox{\rm div}\;\!}
\def\cA{{\mathcal A}}
\def\cC{{\mathcal C}}
\def\cD{{\mathcal D}}
\def\cF{{\mathcal F}}
\def\cH{{\mathcal H}}
\def\cL{{\mathcal L}}
\def\cM{{\mathcal M}}
\def\cO{{\mathcal O}}
\def\cP{{\mathcal P}}
\def\cS{{\mathcal S}}
\def\cZ{{\mathcal Z}}
\newcommand{\Int}{\displaystyle \int}
\def\d{\partial}
\def\ep{\varepsilon}
\def\eps{\varepsilon}
\begin{document}
\title[Partially dissipative systems]{Partially dissipative  systems in the critical regularity setting, 
and strong relaxation limit}
\date\today

\subjclass{35Q35; 76N10}
\keywords{Hyperbolic systems, critical regularity, relaxation limit, partially dissipative}

\author[R. Danchin]{Rapha\"el Danchin}
\address[R. Danchin]{Univ Paris Est Creteil, CNRS, LAMA, F-94010 Creteil, France Univ Gustave Eiffel, LAMA, F-77447 Marne-la-Vallée, France}
\email{danchin@univ-paris12.fr}

 {\begin{center}
\parbox{14.5cm}
{\begin{abstract} 
 Many physical phenomena may be modelled by  first order hyperbolic equations with degenerate dissipative or diffusive terms. 
 This is the case for example in  gas dynamics, where the mass is conserved during the evolution, but the momentum balance includes a diffusion (viscosity) or damping (relaxation) term,  or, in numerical simulations, of  conservation laws by relaxation schemes.
  
	Such so-called \emph{partially dissipative systems} have been first pointed out 
	by S.K.~Godunov in a short note in Russian  in 1961. 
	Much later, in 1984,  S.~Kawashima highlighted in his PhD thesis  a simple criterion ensuring
	the existence of  global  strong solutions in the vicinity of a linearly stable constant state. 
	This criterion has  been revisited in a number of  research works. In particular, K. Beauchard and E. Zuazua  proposed in 2010 an explicit method for constructing  a Lyapunov functional allowing to refine Kawashima's results and  to establish global existence results in some 
	situations that were not covered before.  

	These notes originate essentially  from  the PhD thesis of 
T. Crin-Barat  that was initially motivated by an earlier observation of the author in a Chapter 
of the handbook coedited by Y. Giga  and A. Novotn\'y.  
 Our main aim is to adapt  the method of Beauchard and Zuazua to a class of symmetrizable quasilinear hyperbolic systems (containing the compressible Euler equations), in a critical regularity setting that allows to keep track of the dependence with respect to e.g. the relaxation parameter.
 Compared to Beauchard and Zuazua's work, we  exhibit a `damped mode'  that will have a key role
 in the construction of global  solutions with critical regularity,   in the proof of optimal time-decay 
estimates and, last but not least, in the study of  the strong relaxation  limit.
For simplicity, we here focus on a simple class of partially dissipative systems, 
but the  overall strategy is rather flexible,  and adaptable to much more involved situations. 
 \end{abstract}}\end{center}}
\maketitle


\section*{Introduction}

An important recent mathematical literature has been devoted to the study of first order systems of conservation laws. 
These systems that come into play in the description  of  a number 
phenomena  in mechanics, physics or engineering  typically read 
\begin{equation}\label{eq:CL}\d_t f^0(V) + \sum_{k=1}^d \d_{x_k} (f^k(V))=0\end{equation}
where   the vector-fields $f^k,$ $k=0,\cdots,d$ are defined on some open subset  $\cO$ of $\R^n,$
and the unknown $V$ depends on the time variable $t\in\R_+\triangleq [0,\infty)$ and on the space variable $x\in\R^d.$ 

Under rather general conditions,  for example  whenever \eqref{eq:CL} is Friedrichs-symmetrizable, 
it is well known that   for any $\bar V$ in $\cO$ and initial data $V_0:\R^d\to \cO$ such that
$V_0-\bar V$ belongs to some Sobolev space $H^s(\R^d)$ with $s>1+d/2,$
then \eqref{eq:CL} supplemented with initial data $V_0$ admits
a unique classical solution $V$ on some time interval $[-T,T],$
 satisfying $(V-\bar V)\in\cC_b([-T,T]; H^s(\R^d))$
 (the reader may find  the detailed statement and the proof  
in e.g. \cite[Chap. 10]{BS}). At the same time, for most systems of the above type, 
 smooth solutions (even small ones)  blow-up after finite time. 
\smallbreak
In many  physical systems however, friction or  diffusion  phenomena (through e.g. thermal conduction or viscosity) 
cannot be neglected. Typically, they act   on  some components of the unknown, while
other components remain unaffected.  An informative example is  gas dynamics
where  the  mass is conserved  (as well as the entropy  in the isentropic case).  
In order to have an  accurate description  corresponding to these situations, 
it is thus suitable to add in  \eqref{eq:CL}   zero (friction) or second (diffusion) order terms that
act on a part of the unknown  but, possibly, not on all components. 
The resulting  class of systems is named, depending on the authors and on the context, 
hyperbolic-parabolic, partially diffusive or partially dissipative. It has been extensively 
studied  since the pioneering work by S. Kawashima in his PhD thesis \cite{Ka84}. 
 One of the main issues is to find as weak as possible conditions ensuring the existence of global solutions 
 close to  constant states,  to describe their long time asymptotics and, where applicable, 
to study  the convergence to some limit system.
 \smallbreak
Rather than  writing out now the class of systems that enter in our study,  let us give a simple example from 
 multi-dimensional gas dynamics. 
 In he barotropic and isothermal case, the governing equations then read: 
\begin{equation}\label{eq:euler}
\left\{\begin{aligned}&\d_t\varrho +\divx(\varrho v)=0\quad &\hbox{in}\quad \R_+\times\R^d,\\
&\d_t(\varrho v)+\divx(\varrho v\otimes v) +\nabla_x P = A(\varrho,v)\quad &\hbox{in}\quad \R_+\times\R^d.
\end{aligned}\right.\end{equation}
Above, $\varrho=\varrho(t,x)\in\R_+$ stands for the density of the gas, and $v=v(t,x)\in\R^d,$ for the velocity. 
The pressure $P=P(\varrho)$ is a given function of the density. A  typical example is the \emph{isentropic
pressure law}   $P(\varrho)=a\varrho^\gamma$ with $a>0$ and $\gamma>1$.  The first equation corresponds to the mass conservation
and the second one, to the momentum balance. 
We assume that the fluid domain is the whole space which, somehow, means 
that boundary effects are neglected. This is a fundamental assumption for our analysis, that strongly relies on Fourier methods.  
\medbreak
Regarding $A,$ the usual assumptions   are: \begin{itemize}
\item[---] either $A$ is identically zero: then \eqref{eq:euler} is the \emph{barotropic compressible Euler equations} that 
 is known to be Friedrichs-symmetrizable (again, refer e.g. to \cite[Chap. 10]{BS}) and thus enters
in the class considered in \eqref{eq:CL};
\item[---]  or $A(\varrho,v)= \mathfrak f\, \varrho v$ for some $\mathfrak f>0$ (this is the so-called  \emph{damped barotropic compressible Euler equations}, also 
named \emph{Euler equations with relaxation parameter $\ep$} if $\mathfrak f=\ep^{-1}$);
\item[---]  or $A(\varrho,v)=  \divx(\mu(\varrho)(\nabla_x u+\! {}^t\nabla_x u)) + \nabla_x(\lambda(\varrho)\divx u)$ for 
some smooth functions $\lambda$ and $\mu$ satisfying $\mu>0$ and $\lambda+2\mu>0$ 
(then, \eqref{eq:euler} is the \emph{barotropic compressible Navier-Stokes equations}). 
\end{itemize}
 \smallbreak
It is by now well understood that in the first situation (neither viscosity nor damping),  smooth initial data
 generate a local-in-time solution that  is likely to blow up after  finite time 
 (see e.g. \cite{Alinhac,S86})  
whereas in the second and third situations, 
small and sufficiently smooth perturbations of a constant density state
\begin{equation}\label{eq:constantstate} (\bar\varrho,0)\with\bar\varrho>0\andf  P'(\bar\varrho)>0
\end{equation} 
 produce global strong solutions that are defined
for all positive times. 
\smallbreak
\medbreak
The good diffusive properties of the barotropic compressible Navier-Stokes equations in the whole space $\R^3$
(and, more generally, of  the full non-isothermal polytropic system)  have been first observed by A. Matsumura and T. Nishida 
at  the end of the 70ies. In  \cite{MN}, they   established the
global existence  of strong solutions for $H^3(\R^3)$  perturbations of   any  constant  state of type \eqref{eq:constantstate}
(see \cite{D1} for a version of this result in the broader setting of `critical Besov spaces').  
An important achievement in the study of general first  order partially dissipative symmetric hyperbolic
 systems having  both terms of order $0$ and $2$  has been made 
by S. Kawashima in 1984, in his PhD thesis \cite{Ka84}. There, he exhibited a rather simple sufficient condition 
that is nowadays called \emph{the (SK) (meaning Shizuta-Kawashima) condition} 
for  global existence of strong solutions  in the neighborhood of linearly stable
constant solutions.  In the case where there is only  a $0$-order  partially dissipative term, 
Condition (SK) exactly  says that for the linearized system,  \emph{the intersection between the kernel of the 
$0$-order term and the set of all eigenvectors of the symmetric first order term 
is reduced to $\{0\}.$} 

A  bit later, S.~Shizuta and S. Kawashima in \cite{SK}  observed
that Condition (SK) is  equivalent to the fact that,  in the Fourier space,  the real parts of all eigenvalues of 
the matrix of the linearized system about the reference  solution are strictly 
negative and also to the existence of a  \emph{compensating function}. That compensating function comes into play 
for working out a functional  that is equivalent to a Sobolev norm  of high order  
and allows to recover the optimal dissipative properties of the system. 
In the same paper, the authors pointed out that, if in addition of being in a Sobolev
space $H^s(\R^d)$ with large enough $s,$ the discrepancy of the initial data to the 
reference constant solution $\bar V$ belongs to some Lebesgue space $L^p(\R^d)$ with $p\in[1,2],$ 
then the global solution $V$ converges to $\bar V$ in $L^2(\R^d)$ with the same decay 
rate as for the heat equation, namely  $(1+t)^{-\frac d2(\frac1p-\frac12)},$ when 
$t$ goes to infinity.  Since then, more decay estimates have 
been proved under various assumptions in e.g. \cite{BHN,XX,XK3}. 
\smallbreak
A number of more accurate results have been obtained since then for specific systems. 
For instance, T. Sideris \emph{et al} \cite{STW} considered the 
three-dimensional compressible Euler equations with damping
and Y. Zeng \cite{Zeng} studied  a particular class of $4\times4$ nonlinear hyperbolic system with relaxation. 
General  partially ($0$-order) dissipative systems  have  been investigated by 
S. Kawashima and W.-A. Yong in \cite{KY1,KY2} and by  W.-A. Yong in \cite{Yong}, 
and adapted to second order partially diffusive operators by V. Giovangigli et al in \cite{GM,GY}. 
Recent works on general partially dissipative systems
in the  so-called critical functional framework (that will be recalled later in this text)
have been performed by J. Xu and S. Kawashima \cite{XK1,XK2,XK3}. 

It has also been observed by several authors that  Condition (SK) is not necessary for the existence of global 
strong solutions. For instance, in \cite{QW}, P. Qu and Y. Wang 
established a global existence result in the case where exactly one eigenvector violates Condition (SK).
In this respect, one  can also mention  the paper by R. Bianchini and R. Natalini \cite{BN}
that uses nonresonant bilinear forms, and 
 the recent work \cite{BCBT}  dedicated 
to the mathematical study of a model of mixture of compressible fluids. 
 \smallbreak
The strength of Shizuta and Kawashima's approach is that it does not require 
to compute explicitly the Green function of the linearized system under consideration. 
Although doing this calculation   for the damped barotropic Euler equations    presented above is not an issue,
computing the Green kernel associated to the corresponding linearized system in the   
nonisothermal case  is already more involved, and it soon becomes impossible for more cumbersome systems
 (like e.g. systems related to  the description of plasma or radiative phenomena, see e.g  \cite{DD}).
As said before, having  a `compensating function'  at hand allows to construct an energy functional  that encodes 
the dissipative properties of the system.  In  Shizuta and Kawashima's work however,  this functional 
is not so explicit, that makes difficult, if not impossible, to track 
the dependency of the solution with respect to the parameters of the system, when applicable. 
Another   limitation  is that it only provides estimates on the whole solution, without supplying more 
accurate informations on the part of the solution which is expected to 
experience a better dissipation. 

In \cite{BZ},  K. Beauchard and E. Zuazua  took advantage of techniques 
that originate from  Kalman  control theory for linear ODEs so as to construct explicit 
Lyapunov functionals   for general partially dissipative systems of order $1.$  
They also pointed out the  connection between Condition (SK) and the Kalman criterion for observability 
in the theory of linear ODEs  (this was also noticed  by D. Serre in  his unpublished lecture notes
\cite{Serre}).
To some extent, Beauchard and Zuazua's approach 
 may be interpreted in the broader framework of hypo-ellipticity as presented by  L. H\"ormander in \cite{Ho}  or, much more recently, by C. Villani   in \cite{Villani}. To keep these notes as elementary and short  as possible, we refrain from 
 looking deeper into this direction, though.

 Although  it is not mentioned in the  construction of a  Lyapunov functional,
 Beauchard and Zuazua's  approach  provides for free compensating functions. Furthermore, 
 the  construction is elementary  (it suffices to compute at most $n$ powers of matrices) and  
easily  localizable in the Fourier space. Hence,   at the linear level, keeping  track   
of the different behavior of the low and of the high frequencies of the solution is obvious. 
Their method further allows  to handle some systems that do not satisfy Condition (SK)
(but we shall not investigate this interesting point is these notes).  
\bigbreak
The present lecture notes aim at familiarizing the reader with the Beauchard-Zuazua approach  and recent 
updates that originate  from the thesis of T. Crin-Barat
and were published in \cite{CBD1,CBD2,CBD3}.    
As our aim  is not to provide the reader with an exhaustive theory of partially dissipative 
  hyperbolic systems but rather to present 
   a clear   road map allowing him to tackle efficiently the study of systems of this type, 
    we shall focus on the following  `academic' class of partially dissipative hyperbolic systems: 
\begin{equation}
\partial_t V + \sum_{k=1}^dA^k(V){\partial_k V}=\eps^{-1} H(V). \label{GEQSYMeps}
\end{equation}
Above,  the (smooth) functions 
$A^k$  ($k=1,\cdots,d$)  and $H$ are defined  on some   open subset   $\mathcal{O}$ of $ \mathbb{R}^n,$
and  have range in the set of  $n\times n$ real symmetric matrices,  and in $\R^n,$ respectively. 
The  unknown $V=V(t,x)$ depends on the time variable 
$t\in \mathbb{R}_+$ and  on the space variable $x\in\mathbb{R}^d$  ($d\geq1$).
We fix a constant solution $\bar V\in\mathcal{O}$  of \eqref{GEQSYMeps} (hence $H(\bar V)=0$). 
The system is supplemented with  initial data $V_0\in\mathcal O$ at time $t=0,$
that are sufficiently close to   $\bar V.$  Finally, the relaxation parameter $\eps$ is 
a given positive parameter that, except in Section \ref{s:relax}, is  taken equal to $1.$  

A basic example of a physical system in the above class is 
 the compressible Euler equations with isentropic
pressure law $P(\varrho)= a\varrho^\gamma,$  if rewritten in terms of the (renormalized) sound speed 
$$c\triangleq \frac{(\gamma A)^\frac{1}{2}}{\wt\gamma}(\varrho)^{\wt\gamma}\with\wt\gamma\triangleq \frac{\gamma-1}2\cdotp$$ 
Indeed, the pair $(c,v)$ then satisfies:
\begin{equation}\label{eq:eulerc}\left\{\begin{aligned}&\d_tc +v\cdot\nabla c+\wt\gamma c\div v=0\quad &\hbox{in}\quad \R_+\times\R^d,\\
&\d_tv+v\cdot\nabla v +\wt\gamma c\nabla c +\ep^{-1}v=0 &\hbox{in}\quad \R_+\times\R^d.
\end{aligned}\right.\end{equation}
 Under the so-called Condition (SK) (presented in  the next section) that is satisfied in particular by \eqref{eq:eulerc}, we shall  
 prove the existence of global strong solutions with `critical regularity'   for \eqref {GEQSYMeps} in the neighborhood
of any constant solution $\bar V$ (see Theorems \ref{ThmGlobal} and \ref{Thmd2}).
Then, we shall   obtain the strong convergence to $\bar V$  in the long time asymptotics  with  
explicit  decay rates (Theorem \ref{Thm:decay}). 
In Section \ref{s:relax}, we shall  investigate the strong relaxation limit, that is 
the  convergence of the solutions  of  \eqref {GEQSYMeps} to some limit system.  
Let us shortly explain what we mean in  the simple case of the compressible Euler equations. 
Making the following `diffusive' rescaling: 
$$(\varrho, v)(t,x)= (\wt\varrho,\ep\wt v)(\ep  t,x),$$
we see that the pair $(\wt\varrho,\wt v)$ satisfies:
$$\left\{\begin{aligned}&\d_\tau\wt\varrho +\div(\wt\varrho \wt v)=0\quad &\hbox{in}\quad \R_+\times\R^d,\\
&\ep^2\d_\tau(\wt \varrho  \wt v)+\ep\div(\wt \varrho \wt v\otimes \wt v) +\nabla(P(\wt\varrho)) +\wt\varrho\wt v =0\quad &\hbox{in}\quad \R_+\times\R^d.
\end{aligned}\right.$$
Hence, formally, if $\wt\varrho$ and $\wt v$ tend to some functions $N$ and $w,$ then the  second equation above yields
$$ \nabla(P(N))+ N w=0$$
which, plugged in the  mass conservation equation leads to  the so-called porous media equation:
\begin{equation}\label{eq:PM0}
\d_\tau N-\Delta(P(N))=0.\end{equation}
The rigorous justification of the convergence of the density to a solution of \eqref{eq:PM0}
  has been first carried out  by  S. Junca and M. Rascle \cite{JR} 
in the  one-dimensional case where specific techniques may be used.  In the multi-dimensional case, 
the weak convergence and  the strong convergence on bounded subsets of $\R^d$   have been proved by 
J.-F. Coulombel and C. Lin in \cite{LC}, and by Z. Wang and J. Xu in \cite{XW}.  
Results in the same spirit for a class of partially dissipative hyperbolic systems have been 
obtained by Y.-J. Peng and V. Wasiolek in \cite{PW}. 
The approach that is proposed in the present lecture notes allows to get the \emph{strong 
convergence in the whole space  with explicit convergence rates for suitable norms} when the relaxation parameter tends to zero
not only for  the Euler equations, but also for a  class
of partially hyperbolic systems (see Theorem \ref{Thm3}).  
\medbreak
It should be noted that,  at the linear level,  the method that has been 
originally proposed by K. Beauchard and E. Zuazua in \cite{BZ} 
works exactly the same for partial differential operators of any order (and, more generally, for homogeneous Fourier multipliers) provided one of them 
is skew-symmetric and the other one, nonnegative.  
We will enrich this method by exhibiting   a `damped mode' for low 
frequencies, first introduced in \cite{CBD1} and \cite{CBD2} to the best of our knowledge.  
 This the key  to an optimal  treatment of the low frequencies of the solution  in a critical framework.
With almost no additional effort, assuming a bit more integrability on the initial data (expressed in terms of negative Besov spaces like
in  the work \cite{XK3} by J. Xu and S. Kawashima), and 
arguing essentially as in the paper by Y. Guo and Y. Wang \cite{GW}, 
we  will derive optimal time decay estimates, pointing out better decay
for the high frequencies of the solution and for the damped mode. 
It turns out that adopting a critical  approach with different levels of regularity 
for low and high frequencies also  allows to keep track of  the relaxation parameter $\eps$ 
just by suitable space/time rescaling. This  substantially simplifies  the study of the strong relaxation limit. 
Here again, having a damped mode at hand plays an essential role.
\smallbreak
Except for our linear analysis, we here concentrate on  first order hyperbolic symmetric systems with a partial dissipation
term of order $0.$  The class that is considered contains the isentropic 
Euler equations with relaxation.   We  expect the whole strategy  modified accordingly 
to be adaptable  to hyperbolic-parabolic systems,  to  operators of any  order 
and to more complex situations where the partially dissipative terms
have mixed orders (see recent examples in \cite{DD} and \cite{BCBT}).
It would also be of interest to study to what extent it may be adapted 
  to   situations where   pseudo-differential operators depending on the space variable come into play. 
  Since we used mostly Fourier analysis in our investigations, most of our results can be adapted 
   to periodic boundary conditions in one or several directions, leading to the same  statements 
  in the first three sections (the strong relaxation limit studied in Section \ref{s:relax} may be different 
  since the rescaling we used there changes the size of the periodic box). 
    Handling   `physical' boundaries requires completely different tools, and we have no opinion on whether
  similar results are true or not.  
 \medbreak
The rest of  these notes unfolds as follows. 
In the next section, we  present Beauchard and Zuazua's approach for linear partially dissipative hyperbolic
systems with operators of any orders. 
This enables us to deduce quite easily global-in-time  a priori estimates in
`hybrid' Besov spaces with different  regularity exponents   for low and high frequencies. We also exhibit a damped mode, the low frequencies of which satisfy better decay estimates
and point out that, under additional structure conditions on the system, 
it is possible to use without much effort an  $L^p$ functional framework for the low frequencies. 
The following sections  focus on   the nonlinear system \eqref{GEQSYMeps}. 
In Section \ref{s:2}, we prove global-in-time results while time decay estimates are established in Section
\ref{s:decay}. In Section \ref{s:relax}, we prove strong convergence results when the relaxation parameter $\eps$ 
tends to $0$ for partially dissipative systems having the same structure as the isentropic compressible
Euler equations  with damping. A few technical results are recalled or proved in Appendix.

  \bigbreak\noindent{\bf Acknowledgements.}
  The author  is indebted to the anonymous referees for their 
 relevant remarks and  suggestions that contributed to improve 
substantially the organization of these notes. The author has been partially supported by the ANR project INFAMIE (ANR-15-CE40-0011).

 
 \section{The linear analysis}

To better understand the difference between the  three model situations corresponding to System \eqref{eq:euler}, 
 having first  a look  at the linearized equations about  $(\bar\varrho,0)$ is very informative.  
After suitable renormalization, the system to be considered reads:
\begin{equation}\label{eq:lineareuler}
\left\{\begin{aligned}&\d_ta +\div u=0\quad &\hbox{in}\quad \R_+\times\R^d,\\
&\d_tu+\nabla a +\kappa(-\Delta)^{\frac\beta2}u =0\quad &\hbox{in}\quad \R_+\times\R^d.
\end{aligned}\right.\end{equation}
The above  cases correspond to  $(\kappa,\beta)=(0,0),$ 
$(\kappa,\beta)=(\mathfrak f,0)$ with $\mathfrak f>0$ or $(\kappa,\beta)=(\mu(\bar\varrho),2)$ (in the special situation $\lambda(\bar\varrho)+\mu(\bar\varrho)=0$
the general case being similar), respectively. 

If  $\kappa=0$ then  System \eqref{eq:lineareuler} is purely first order hyperbolic and 
no  diffusion or dissipative  phenomenon is expected whatsoever since 
all Sobolev norms are constant in time.  In the multi-dimensional case, 
 \emph{dispersive} phenomena of wave equation type do exist, 
 but they concern only the density and the potential part of the velocity (they will not be discussed here). 

Let us focus on the case $\kappa>0$ and $\beta\not=1$ (not necessarily equal to $0$ or $2$). 
After suitable rescaling, one can then suppose that   $\kappa=1.$ 
In the Fourier variable $\xi$  corresponding to the physical variable  $x$, the  above system \eqref{eq:lineareuler} rewrites
\begin{equation}\label{eq:au} 
\frac d{dt}\begin{pmatrix}\wh a\\\wh u\end{pmatrix} +\begin{pmatrix} 0&i\xi\\ i{}^t\!\xi&|\xi|^\beta\end{pmatrix}
\begin{pmatrix}\wh a\\\wh u\end{pmatrix}=\begin{pmatrix}0\\0\end{pmatrix},\qquad\xi\in\R^d.\end{equation}
In order to have some insight on the long-time behavior of the solution $(a,u),$ let us 
look  at the eigenvalues of the $(d+1)\times(d+1)$  matrix of System \eqref{eq:au}. 
The eigenvalue $|\xi|^\beta$  appears with multiplicity $d-1$ (this corresponds to the `incompressible' part of 
the velocity field). The remaining  two eigenvalues $\lambda^\pm(\xi)$  capture
the coupling between $a$ and the `compressible' part of $u,$   and may be computed by  considering
the following  $2\times2$ reduced system satisfied by $a$ and $\upsilon\triangleq (-\Delta)^{-1/2}\div v,$ namely,
if $\kappa=1,$
$$\left\{\begin{aligned}&\d_ta +(-\Delta)^{1/2}\upsilon=0\quad &\hbox{in}\quad \R_+\times\R^d,\\
&\d_t\upsilon-(-\Delta)^{1/2}a+(-\Delta)^{\frac\beta2}\upsilon =0\quad &\hbox{in}\quad \R_+\times\R^d.
\end{aligned}\right.$$
The corresponding matrix in the Fourier space reads
$\begin{pmatrix} 0&|\xi|\\ -|\xi|&|\xi|^\beta\end{pmatrix}\cdotp$
\medbreak
\noindent Two different situations occur depending on whether $\beta$ is smaller or greater than $1$:
\begin{itemize}
\item The `dissipative' situation $\beta<1$: 
$$\begin{array}{ll}
\lambda^\pm(\xi)=\frac{|\xi|^\beta}2\Bigl(1\pm\sqrt{1-4|\xi|^{2(1-\beta)}}\Bigr)\quad&\hbox{if}\quad  |\xi|^{1-\beta}<1/2,\\[1ex]
\lambda^\pm(\xi)=\frac{|\xi|^\beta}2\Bigl(1\pm i\sqrt{4|\xi|^{2(1-\beta)}-1}\Bigr)\quad&\hbox{if}\quad  |\xi|^{1-\beta}>1/2.
\end{array}$$
Observe that for $\xi\to0,$ we have $ \lambda^+(\xi)\sim |\xi|^\beta$ (parabolic behavior similar to that 
of $(-\Delta)^{\beta/2}$) while $\lambda^-(\xi)\sim |\xi|^{2-\beta}$ (parabolic behavior  of type $(-\Delta)^{1-\beta/2}$).
\item The `diffusive' situation $\beta>1$: 
$$\begin{array}{ll}
\lambda^\pm(\xi)=\frac{|\xi|^\beta}2\Bigl(1\pm i\sqrt{4|\xi|^{2(1-\beta)}-1}\Bigr)\quad&\hbox{if}\quad  |\xi|^{\beta-1}<1/2,\\[1ex]
\lambda^\pm(\xi)=\frac{|\xi|^\beta}2\Bigl(1\pm \sqrt{1-4|\xi|^{2(1-\beta)}}\Bigr)\quad&\hbox{if}\quad  |\xi|^{\beta-1}>1/2.
\end{array}$$
For $\xi\to\infty,$ we have $ \lambda^+(\xi)\sim |\xi|^\beta$ (parabolic behavior like for $(-\Delta)^{\beta/2}$) while $\lambda^-(\xi)\sim |\xi|^{2-\beta}$ (parabolic behavior similar to that of~$(-\Delta)^{1-\beta/2}$).
\end{itemize}

At the linear level, the damped Euler equations and the compressible Navier-Stokes equations correspond to the 
dissipative and diffusive situations, respectively. 
We observe that, in the two cases, the whole solution decays to $0$ with a decay rate that depends on $|\xi|$ 
although there is no damping term in the linearized mass equation. 
Note however that, depending on whether $\beta<1$ or $\beta>1,$
 the behavior of the low and high frequencies of the solution  is exchanged. 
\medbreak
Let us revert to our model system \eqref{GEQSYMeps}   with $\varepsilon=1$ for simplicity,  namely 
\begin{equation}
\partial_t V + \sum_{k=1}^dA^k(V){\partial_k V}=H(V). \label{GEQSYM}
\end{equation}
Let us  fix a constant solution $\bar V$ of \eqref{GEQSYM} (that is, 
$\bar V\in\mathcal{O}$ satisfies $H(\bar V)=0$) and  make the following 
structure assumptions on the system: 
\begin{enumerate}
\item[{(\bf H1)}]  For all $V\in\cO,$ the matrices $A^k(V)$ are real symmetric;\smallbreak
\item[{(\bf H2)}]  The spectrum of  $DH(\bar V)$ is included in the set $\{z\in\C\: : \: \Re z\leq 0\}.$ 
\end{enumerate}
\smallbreak
In  the case $H\equiv0$ (no dissipation at all)   smooth solutions, even small ones,  may  blow up after finite
time. At the exact opposite, if  the spectrum of $DH(\bar V)$ is included in the set $\{z\in\C\: : \: \Re z<0\}$
then it is not difficult to show that small perturbations of $\bar V$ in the Sobolev space $H^s$ with $s>1+d/2$  generate global strong solutions that tend exponentially 
fast to $\bar V$ when time goes to infinity.
 We here address the intermediate situation where some eigenvalues of $DH(\bar V)$  vanish. 
 For expository purpose, we assume that $H$ is linear and has the block structure:
 \begin{equation}\label{eq:H}
  H(V)=\begin{pmatrix} 0\\ -L_2(V_2-\bar V)\end{pmatrix} \with V=\begin{pmatrix} V_1\\ V_2\end{pmatrix}
  \end{equation}
  where $V_1\in\R^{n_1},$ $V_2\in\R^{n_2}$ (with $n_1+n_2=n$)  and $L_2:\R^{n_2}\to\R^{n_2}$ is linear invertible and 
 such that $L_2+{}^t\!L_2$ is definite positive.
Additional structure assumptions  on $L_2$ and on the matrices $A^k$
will be specified later on.

   \subsection{Reduction of the problem}
 
Denoting $Z\triangleq V-\bar V$ and $LZ\triangleq -H(\bar V +Z)$ (with $H$ as in \eqref{eq:H}), 
 the system for $Z$ reads
\begin{equation}\label{eq:Z}\d_tZ +\sum_{k=1}^d A^k(\bar V+Z)\d_k Z + LZ=0,\end{equation}
and the corresponding linearized system is thus 
   \begin{equation}\label{eq:Zlinear}\d_tZ+\sum_{k=1}^d\bar A^k \d_kZ + LZ=F \with  \bar A^k\triangleq  A^k(\bar V)\ \hbox{ for }\ k=1,\cdots,d.\end{equation}

In the Fourier space, the above system recasts in
 $$\d_t\wh Z+i\sum_{k=1}^d\bar A^k \xi_k\wh Z + L\wh Z=\wh F.$$
The symmetry of the matrices $\bar A^j$ ensures that for all $\xi\in\R^d,$
the matrix 
\begin{equation}\label{eq:Axi}
A(\xi)\triangleq i\sum_{k=1}^d\bar A^k \xi_k\end{equation}
 is skew Hermitian, while the symmetric part of $L$ is nonnegative. 
 Denoting by $A(D)$ (resp. $B(D)$) the Fourier multiplier  of symbol\footnote{Throughout the text, 
 we agree that $A(D)\triangleq \cF^{-1} A\cF, $ where $\cF$ stands for the Fourier transform
 with respect to the variable $x.$} 
  $A$ (resp. $L$), 
 System \eqref{eq:Zlinear} rewrites 
\begin{equation}\label{eq:general}\d_t Z+A(D)Z+B(D)Z=F.\end{equation}
The analysis we present below is valid in the more general situation  where:
\begin{itemize}
\item $A(D)$ is a homogeneous (matrix-valued)  Fourier multiplier of degree $\alpha$ that satisfies
\begin{equation}\label{eq:skew} \Re\bigl( A(\omega)\eta\cdot\eta\bigr) =0\quad\hbox{for all }\ \omega\in\S^{d-1}\andf
\eta\in\C^n,\end{equation}
where $\cdot$ designates the Hermitian scalar product in $\C^n,$
\item $B(D)$ is an homogeneous (matrix-valued) Fourier multiplier of degree $\beta,$ such that, for some positive real number $\kappa,$ \smallbreak
\begin{equation}\label{eq:positive} \Re\bigl( B(\omega)\eta\cdot\eta\bigr) \geq\kappa |B(\omega)\eta|^2\quad\hbox{for all }\ \omega\in\S^{d-1}\andf\eta\in\C^n.\end{equation}
\end{itemize}
As a first  example, if one considers the linearized damped compressible Euler equations
about $(\varrho,v)=(1,0)$  in the case $P'(1)=1,$   namely
\begin{equation}\label{eq:leuler}
\left\{\begin{aligned}&\d_ta +\div u=f\quad &\hbox{in}\quad \R_+\times\R^d,\\
&\d_tu +\nabla a +\mathfrak f \,u = g\quad &\hbox{in}\quad \R_+\times\R^d,
\end{aligned}\right.\end{equation}
then we have $n_1=1,$ $n_2=d,$ and  the Fourier multipliers $A$ and $B$ read: 
$$
A(\xi)=i\begin{pmatrix}0&\xi\\ {}^t\!\xi&0\end{pmatrix}\andf B(\xi)=\mathfrak f\begin{pmatrix}0&0\\
0&I_d\end{pmatrix}\cdotp$$
They are of order $1$ and $0,$ respectively. Clearly, \eqref{eq:skew} holds true, as well as \eqref{eq:positive} with $\kappa=\mathfrak f^{-1}.$
\medbreak
As a second   example,  consider the linearized  compressible Navier-Stokes  equations
about $(\varrho,v)=(1,0).$  Denoting   $\bar\mu\triangleq \mu(\bar\varrho)$ and $\bar\lambda\triangleq \lambda(\bar\varrho),$
they  read
\begin{equation}\label{eq:lns}
\left\{\begin{aligned}&\d_ta +\div u=f\quad &\hbox{in}\quad \R_+\times\R^d,\\
&\d_tu +\nabla a -\mu\Delta u -(\lambda+\mu)\div u = g\quad &\hbox{in}\quad \R_+\times\R^d.
\end{aligned}\right.\end{equation}
We still have $n_1=1,$ $n_2=d,$  but  the Fourier multipliers $A$ and $B$ now read: 
$$
A(\xi)=i\begin{pmatrix}0&\xi\\ {}^t\!\xi&0\end{pmatrix}\andf B(\xi)=\begin{pmatrix}0&0\\
0& \bar\mu |\xi|^2\,I_d +(\bar\lambda+\bar\mu)\xi\otimes \xi\end{pmatrix}\cdotp$$
They are of order $1$ and $2,$ respectively.
Both properties  \eqref{eq:skew} and  \eqref{eq:positive} hold true (with $\kappa$ depending on $\bar\lambda$ and $\bar\mu$)
provided $\bar\mu>0$ and $\bar\lambda+2\bar \mu>0.$ 
\medbreak
System \eqref{eq:general}  may be solved by means of Duhamel's formula:
\begin{equation}\label{eq:duhamel}Z(t)=T(t)Z(0)+\int_0^t T(t-\tau) F(\tau)\,d\tau,\end{equation}
where $(T(t))_{t\geq0}$ stands for the semi-group associated to operator $-(A+B)(D).$ 
\medbreak
The value of $T(t)$ may be computed by going into the Fourier space. Indeed,  denote by $\wh Z$
the Fourier transform of $Z$ with respect to $x,$ and by $\xi$ the corresponding Fourier variable. 
Then, in the case $F=0,$ System \eqref{eq:general} rewrites:
$$\d_t\wh Z + E(\xi) \wh Z=0\with E(\xi)\triangleq  A(\xi)+B(\xi).$$
Hence $\wh Z(t,\xi)=\exp(-E(\xi) t) \wh Z_0(\xi).$ In other words, we have $T(t)= \exp(-E(D)t).$ 
\medbreak
In what follows we set for all $\omega\in\S^{d-1}$  and $\rho>0,$
\begin{equation}\label{eq:def}
A(\xi)=\rho^{\alpha}A_\omega\quad\hbox{and}\quad
B(\xi)=\kappa^{-1}\rho^{\beta}B_\omega\quad\hbox{with }\ \xi=\rho\omega.
\end{equation}
With this notation, we have 
\begin{equation}
\wh Z(t,\xi)= \wh Z(0,\xi)\exp\biggl(-\frac{t\rho^\beta}\kappa
\bigl(\kappa\rho^{\alpha-\beta}A_\omega+B_\omega\bigr)\biggr)\cdotp
\end{equation}
Making the change of variable    $\tau\triangleq (t\rho^\beta)/\kappa$ and $r\triangleq \kappa\rho^{\alpha-\beta},$
we discover that   $z(\tau)\triangleq  \wh Z(t,\xi)$  is the solution to 
\begin{equation}\label{eq:z}z'+ E_{r,\omega}z=0
\with E_{r,\omega}\triangleq r A_\omega+B_\omega\end{equation}
since  we have 
$$z(\tau)=z(0)\exp\Bigl(-\tau E_{r,\omega}\Bigr)\cdotp$$
Hence,  the case $\alpha=1,$  $\beta=0$ and $\kappa=1,$
  is generic at the linear level. 


\subsection{Derivation of a Lyapunov functional}

The long time behavior of $z$ is closely connected to the signs of the real  part of the eigenvalues
of  the matrix  $E_{r,\omega}$ defined in \eqref{eq:z}.
The  method proposed by K. Beauchard and E. Zuazua in \cite{BZ}
(see also \cite{D2,D3}), that is inspired by   Kalman's control theory for linear ODEs
 supplies a simple way for constructing an explicit  
 Lyapunov functional  and a dissipation term altogether
\emph{without computing the eigenvalues}.   
  \smallbreak
To explain the construction, fix some $r>0$ and $\omega\in\S^{d-1},$ and consider the ODE  \eqref{eq:z} satisfied by $z.$ 
Combining the  assumptions \eqref{eq:skew} and \eqref{eq:positive} with the renormalization 
\eqref{eq:def} ensures  that  \begin{equation}\label{eq:antiellipt}
\Re((A_\omega\eta)\cdot \eta)=0\ \hbox{ and }\ 
\Re((B_\omega\eta)\cdot\eta)\geq |B_\omega\eta|^2
\quad\hbox{for all }\ (\omega,\eta)\in\S^{d-1}\times\C^n.
\end{equation}
Hence, taking the Hermitian product in $\C^n$ of \eqref{eq:z} with $z$ and keeping the real part yields
\begin{equation}\label{eq:z2}\frac12 \frac d{dt}|z|^2 +|B_\omega z|^2 \leq0.\end{equation}
If $B_\omega$ has rank strictly smaller than $n,$ then the above inequality 
does not ensure decay of all the components of $z$ (even though this decay exists
whenever $r>0$ and $\omega\in\S^{d-1}$ are such that the real parts of all the eigenvalues of 
the matrix $E_{r,\omega}$ are positive). 
To recover the decay (if any) for the `missing components' 
of the solution, one can start with the identity
$$(B_\omega z)'+(r B_\omega  A_\omega +B_\omega^2)z=0.$$
Hence,  taking the Hermitian product with $BAz$   (we drop the index  $\omega$ for better readability), we obtain
$$Bz'\cdotp BAz + r|BA z|^2 +B^2z\cdot BA z=0.$$
Similarly, we have
$$Bz\cdotp BA z'+ rBz\cdotp BA^2z+ Bz\cdotp B^2 Az=0,$$
whence
$$\frac d{dt}(Bz\cdot BAz) +r|BAz|^2 +B^2z\cdotp BAz + Bz\cdotp B^2Az= -r Bz\cdotp BA^2z.$$
Remembering \eqref{eq:z2} and  using  several times  the obvious inequality 
$$2\Re (a\cdotp b)\leq  K|a|^2 + K^{-1}|b|^2$$
with suitable values of $K,$   we discover  that one can find 
some $\varepsilon_1$ (that can be taken arbitrarily small) such that 
\begin{multline}\label{eq:z1}\frac d{dt}\Bigl(|z|^2 +\varepsilon_1\min (r,r^{-1}) \Re (Bz\cdot BAz)\Bigr)+|Bz|^2
+\varepsilon_1\min(1,r^2) |BA z|^2 \\\leq C\varepsilon_1\min(1,r^2) |BA^2z|^2.\end{multline}
\medbreak 
In the case $BA^2\not=0,$ we need (at least) one more relation to handle the term in the right-hand side.
For that, one can start from the equation
$$(BAz)'+(r BA^2 +BAB)z=0$$ and take the Hermitian scalar product with $BA^2z,$ adding  up the resulting identity 
multiplied by a small enough $\varepsilon_2$ to \eqref{eq:z1}, then iterate the procedure. 
The fundamental observation 
of Beauchard and Zuazua in \cite{BZ} is that Cayley-Hamilton theorem ensures the existence  of 
complex numbers $c_0,\cdots,c_{n-1}$ so that 
$$A^n= \sum_{k=0}^{n-1} c_k A^k.$$
Consequently, one can end the process after at most $n$ steps.
 In the end,  we get positive parameters $\ep_0=1$ and $\ep_1,\cdots,\ep_{n-1}$ 
(that are defined inductively and  can be taken arbitrarily small) such that for all $\omega\in\S^{d-1}$ and $r>0,$
 we have 
\begin{multline}\label{eq:Lomega}
\frac d{dt} L_{r,\omega}(z)+\frac{\min(1,r^2)}2\sum_{\ell=0}^{n-1} \ep_\ell|B_\omega A_\omega^\ell z|^2\leq0
\\\with L_{r,\omega}(z)\triangleq |z|^2+\min(r,r^{-1})\sum_{\ell=1}^{n-1}  \ep_\ell\Re(B_\omega A_\omega^{\ell-1}z\cdotp B_\omega A_\omega^\ell z)\end{multline}
and, additionally, 
\begin{equation}\label{eq:equivL}
\frac12|z|^2\leq  L_{r,\omega}(z)\leq 2|z|^2.\end{equation}
Consequently, denoting $\displaystyle N_\omega\triangleq \inf\:\Bigl\{\sum_{\ell=0}^{n-1} \ep_\ell |B_\omega A_\omega^\ell x|^2\, ,\, x\in \S^{d-1}\Bigr\},$
we conclude from \eqref{eq:Lomega} and \eqref{eq:equivL} that 
\begin{equation}\label{eq:Lromega}
L_{r,\omega}(\tau)\leq e^{-\frac14\min(1,r^2)N_\omega \tau} L_{r,\omega}(0)\qquad \omega\in\S^{d-1},\quad r>0.\end{equation}
In the particular case where 
\begin{equation}\label{eq:Nomega}
N_\omega>0\quad\hbox{for all}\quad \omega\in\S^{d-1},\end{equation}
 (the only situation  
that will be considered in these notes) then $N_\omega$ is actually bounded away from zero 
owing to the compactness of the sphere.  Hence, \eqref{eq:Lromega} implies that 
there exists a positive constant $c$ such that
for all $r>0$ and $\omega\in\S^{d-1},$ we have
$$L_{r,\omega}(\tau)\leq e^{-2c\min(1,r^2)\tau}L_{r,\omega}(0),\qquad \tau\geq0.$$
Then, using once more \eqref{eq:equivL} and reverting to the original unknown $\wh Z,$ we conclude  that 
\begin{equation}\label{eq:whZ}|\wh Z(t,\xi)|\leq 2 e^{-c\min(\kappa^{-1}|\xi|^\beta,\kappa|\xi|^{2\alpha-\beta})t}
|\wh Z_0(\xi)|.\end{equation}
In other words,  if \eqref{eq:Nomega} holds then:
\begin{itemize}
\item  either  $\alpha>\beta,$  and  we are in a partially dissipative regime
similar to that of  linearized compressible Euler equations, 
\item or  $\alpha<\beta,$ and  we are in a partially diffusive regime analogous to that
of the linearized compressible Navier-Stokes equations. 
\end{itemize}
\smallbreak
It has been pointed out in \cite{BZ} that \eqref{eq:Nomega} is equivalent 
to the  Shizuta-Kawashima condition.
The following lemma  stresses   the link  between those two conditions, 
the strict dissipativity of System \eqref{eq:Z} and Kalman's condition for observability.  
\begin{lem}\label{l:SK}  Let $A$ and $B$ be two $n\times n$ complex valued matrices.  Assume that $A$ is skew-symmetric 
in the meaning of \eqref{eq:skew} and that $B$ is  nonnegative in the sense of \eqref{eq:positive}. 
The following  properties are equivalent:
\begin{enumerate} 
\item  For all positive   $\ep_0,\cdots,\ep_{n-1},$ we have 
$\displaystyle\sum_{\ell=0}^{n-1}\ep_\ell|BA^\ell\eta|^2>0$   for all $\eta\in\S^{n-1}.$ \mb
\item  We have the Kalman rank property, 
namely  the $n^2\times n$
 matrix $\left(\begin{smallmatrix} B\\BA\\\dots\\ BA^{n-1}\end{smallmatrix}\right)$ has rank  equal to~$n.$\mb
\item The (SK) condition holds true, namely the  intersection between $\ker B$ and the linear space of all eigenvectors of $A$ 
is reduced to $\{0\}.$\sb
\item All eigenvalues of $A+B$ have positive real parts. 
\end{enumerate}
\end{lem}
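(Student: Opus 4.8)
My plan is to establish the equivalences along the chain $(1)\Leftrightarrow(2)\Leftrightarrow(3)\Leftrightarrow(4)$, exploiting two structural facts that the hypotheses provide for free: assumption \eqref{eq:skew} says precisely that $A$ is skew-Hermitian, so that $A+A^{*}=0$ and every eigenvalue of $A$ is purely imaginary; assumption \eqref{eq:positive} says that the Hermitian part of $B$ is nonnegative, with the sharp quantitative bound $\Re(B\eta\cdot\eta)\geq\kappa|B\eta|^{2}$.

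First I would dispatch $(1)\Leftrightarrow(2)$, which is immediate. Since the weights $\ep_{\ell}$ are strictly positive, for a fixed $\eta$ one has $\sum_{\ell=0}^{n-1}\ep_{\ell}|BA^{\ell}\eta|^{2}=0$ if and only if $BA^{\ell}\eta=0$ for every $\ell\in\{0,\dots,n-1\}$; in particular the validity of $(1)$ does not depend on the choice of weights, and $(1)$ just asserts that $\bigcap_{\ell=0}^{n-1}\ker(BA^{\ell})=\{0\}$, which is exactly the triviality of the kernel of the stacked matrix in $(2)$, i.e. its having rank $n$.

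Next I would treat $(2)\Leftrightarrow(3)$. One direction is easy: if some $\eta\neq0$ lies in $\ker B$ and satisfies $A\eta=\lambda\eta$, then $BA^{\ell}\eta=\lambda^{\ell}B\eta=0$ for all $\ell$, so $(2)$ fails. For the converse I would set $\mathcal N\triangleq\bigcap_{\ell=0}^{n-1}\ker(BA^{\ell})$ and show that, if $\mathcal N\neq\{0\}$, it contains an eigenvector of $A$. This is where the one genuinely non-formal idea enters: Cayley-Hamilton furnishes $c_{0},\dots,c_{n-1}$ with $A^{n}=\sum_{k=0}^{n-1}c_{k}A^{k}$, so that membership in $\mathcal N$ already forces $BA^{\ell}\eta=0$ for all $\ell\geq0$; consequently $\mathcal N$ is $A$-invariant, and being a nonzero $A$-invariant complex subspace it carries an eigenvector of $A$, which then lies in $\ker B$ since $\mathcal N\subset\ker B$ — contradicting $(3)$.

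Finally, for $(3)\Leftrightarrow(4)$ I would reuse the energy identity behind \eqref{eq:z2}: pairing $(A+B)\eta=\lambda\eta$ (for a unit eigenvector $\eta$) with $\eta$ in the Hermitian product and taking real parts gives $\Re\lambda=\Re(B\eta\cdot\eta)\geq\kappa|B\eta|^{2}\geq0$; if $\Re\lambda=0$ then $B\eta=0$, so $\eta$ is an eigenvector of $A$ lying in $\ker B$, which $(3)$ forbids, whence $\Re\lambda>0$ and $(4)$ holds. Conversely, if $(3)$ fails, any eigenvector of $A$ in $\ker B$ is automatically an eigenvector of $A+B$ with purely imaginary eigenvalue, contradicting $(4)$. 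I expect the only step requiring a moment's care to be the forward direction $(2)\Rightarrow(3)$: one must remember to close the family $\{\ker(BA^{\ell})\}$ under all powers of $A$ before asserting $A$-invariance of $\mathcal N$, and then invoke the algebraic closedness of $\C$; the remaining implications are one-line consequences of the skew-Hermitian and nonnegativity structure.
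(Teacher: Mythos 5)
Your proof is correct, and it is genuinely more self-contained than the paper's. The paper disposes of the equivalence $(1)\Leftrightarrow(2)\Leftrightarrow(3)$ by calling it ``basic linear algebra'' with a pointer to \cite{BZ}, and obtains the equivalence with $(4)$ by specializing the Lyapunov inequality \eqref{eq:Lromega} to $A_\omega=A,$ $B_\omega=B,$ $r=1$: positivity of $N_\omega$ forces exponential decay of $e^{-t(A+B)},$ hence positive real parts for the spectrum of $A+B.$ You instead give the full algebraic argument: the Cayley--Hamilton step showing that $\bigcap_{\ell=0}^{n-1}\ker(BA^\ell)$ is $A$-invariant (and hence, over $\C,$ contains an eigenvector of $A$ if nonzero) is exactly the mechanism underlying the Kalman/(SK) equivalence that the paper leaves to the reference, and your direct eigenvector computation for $(3)\Leftrightarrow(4)$ --- pairing $(A+B)\eta=\lambda\eta$ with $\eta,$ using $\Re(A\eta\cdot\eta)=0$ and the sharp form $\Re(B\eta\cdot\eta)\geq\kappa|B\eta|^2$ to conclude $\Re\lambda=0\Rightarrow B\eta=0$ --- avoids the Lyapunov machinery altogether and, in particular, handles the direction $(4)\Rightarrow(3)$ more transparently than the paper's one-line appeal to \eqref{eq:Lromega}. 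The trade-off is that the paper's route exhibits the quantitative decay rate as a byproduct, whereas yours establishes only the spectral statement; for the lemma as stated, your argument is complete and arguably preferable.
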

\begin{proof} The equivalence between the first three items is basic linear algebra (see details 
in e.g. \cite{BZ}), while  Inequality \eqref{eq:Lromega} (with $A_\omega=A,$  $B_\omega=B$ and $r=1$)
ensures equivalence with the last item. \end{proof}
\smallbreak
As an example, let us again consider the linearized compressible Euler equations \eqref{eq:leuler}. 
As said before,  \eqref{eq:skew} and \eqref{eq:positive} are satisfied with  
$\alpha=1,$ $\beta=0,$ $\kappa={\mathfrak f}^{-1}.$ Furthermore, we have 
$$A_\omega=i\begin{pmatrix} 0&\omega\\{}^t\!\omega&0\end{pmatrix}\andf 
B_\omega=\begin{pmatrix}0&0\\0&I_d\end{pmatrix},\quad\hbox{so that}\quad
B_\omega A_\omega=i\begin{pmatrix} 0&0\\{}^t\!\omega&0\end{pmatrix}
\cdotp$$
Hence   $\left(\begin{smallmatrix} B_\omega \\ B_\omega A_\omega\end{smallmatrix}\right)$ has
rank $d+1$ and Kalman rank condition is thus satisfied, which gives  eventually
$$|\wh Z(t,\xi)|\leq 2e^{-c\min({\mathfrak f},{\mathfrak f}^{-1}|\xi|^2)t}|\wh Z_0(\xi)|\quad\hbox{for all }\ \xi\in\R^d\andf
t\geq0.$$
Since we do not need higher powers of $A_\omega$ to ensure the Kalman rank condition, one can suspect  that 
one can restrict the sum in the definition of the Lyapunov function $L_{r,\omega}$ to only  one term  ($\ell=1$). 
Now, the reader may observe by direct computation that $B_\omega A^2_\omega = A^2_\omega B_\omega.$
Hence 
$$-B_\omega z\cdotp B_\omega A^2_\omega z = - B_\omega z\cdotp  A^2_\omega B_\omega z \leq C |B_\omega z|^2$$
and taking $\eps_1$ sufficiently small in \eqref{eq:z1}  allows to  just have  
$$\frac d{dt}\Bigl(|z|^2 +\varepsilon_1\min (r,r^{-1}) \Re (B_\omega z\cdot B_\omega A_\omega z)\Bigr)+|B_\omega z|^2
+\varepsilon_1\min(1,r^2) |B_\omega A_\omega z|^2 \leq0.$$
One can be more explicit :  since $z=(\wh a, \wh u)$ and $\xi =r\omega,$  
 we have    
$$
\min(r,r^{-1}) B_\omega z\cdotp B_\omega A_\omega z=   \min (1,|\xi|^{-2}) (\wh u \cdotp (i\xi)\wh a)
=   \min (1,|\xi|^{-2}) (\wh u \cdotp (\wh Da)).$$
Hence,  we conclude that the Lyapunov functional is of the form 
$$\cL(\xi) = |\wh a|^2 + |\wh u|^2 + \eps_1   \min (1,|\xi|^{-2}) \Re  (\wh u \cdotp (\wh Da)).$$
Combining with Fourier-Plancherel theorem, one can conclude  that in order to recover the full dissipative properties of the linearized  compressible Euler equations,  it suffices to  consider  the functional 
$$\|a\|_{L^2}^2 +\| u\|_{L^2}^2 + \eps_1\int_{\R^d} u\cdotp (\Id-\Delta)^{-1}\nabla a \,dx$$
with suitably small $\eps_1$ or, rather, spectrally localized versions of it.  
\medbreak
Similar computations are valid for the linearized compressible Navier-Stokes equations \eqref{eq:lns}. The  reader
may find more details in \cite{D1}.

\subsection{Derivation of a priori estimates} 

Let us assume from now on that $\kappa=1,$ $\alpha=1$ and $\beta=1$ in \eqref{eq:general}
(since  the general case $\alpha\not=\beta$ reduces to that one). 
Recall Duhamel's formula \eqref{eq:duhamel}. Combining with \eqref{eq:whZ}, we get
$$|\wh Z(t,\xi)|\leq 2\biggl(e^{-c\min(1,|\xi|^2)t}|\wh Z_0(\xi)| +\int_0^t e^{-c\min(1,|\xi|^2)(t-\tau)}|\wh F(\tau,\xi)|\,d\tau\biggr)\cdotp$$
Clearly, if one wants  to get  optimal estimates then low and high frequencies
have to be treated differently.  
To proceed, we shall actually  use a more accurate decomposition of the Fourier space, 
namely  a dyadic homogeneous Littlewood-Paley decomposition $(\ddj)_{j\in\Z}$ 
defined by $\ddj\triangleq \varphi(2^{-j}D).$  Here,  $\varphi$ is a smooth nonnegative function
on $\R^d,$ supported in (say) the annulus $\{\xi\in\R^d,\:  3/4\leq|\xi|\leq 8/3\}$ and satisfying 
$$\sum_{j\in\Z}\varphi(2^{-j}\xi)=1,\qquad \xi\not=0.$$
By construction, $\ddj$ is a localization operator in the vicinity of frequencies of magnitude $2^j.$ 
Since $\ddj$ commutes with any Fourier multiplier,  each $Z_j\triangleq\ddj Z$ satisfies \eqref{eq:general} with source term $F_j\triangleq\ddj F$ and 
initial data $Z_{0,j}\triangleq\ddj Z_0.$ Therefore,  we have
$$Z_j(t)=T(t)Z_{0,j}+\int_0^t T(t-\tau) F_j(\tau)\,d\tau,$$
whence, as $|\xi|\simeq2^j$ on $\Supp\wh Z_j,$ we have (changing slightly $c$ if needed), 
$$|\wh Z_j(t,\xi)|\leq 2\biggl(e^{-c\min(1,2^{2j})t}|\wh Z_{j,0}(\xi)| 
+\int_0^t e^{-c\min(1,2^{2j})(t-\tau)}|\wh F_j(\tau,\xi)|\,d\tau\biggr)\cdotp$$
Consequently, after taking the $L^2(\R^d)$ norm of both sides, then using Minkowski 
inequality and Fourier-Plancherel theorem, we end up with:
$$\|Z_j(t)\|_{L^2}\leq 2\biggl( e^{-c\min(1,2^{2j})t}\|Z_{0,j}\|_{L^2}+\int_0^te^{-c\min(1,2^{2j})(t-\tau)}
\|F_j(\tau)\|_{L^2}\,d\tau\biggr),$$
whence
\begin{equation}\label{eq:Zj}\|Z_j(t)\|_{L^2}+c\min(1,2^{2j})\int_0^t\|Z_j\|_{L^2}\,d\tau
\leq 2\biggl( \|Z_{0,j}\|_{L^2}+\int_0^t\|F_j(\tau)\|_{L^2}\,d\tau\biggr)\cdotp\end{equation}
At this stage, two important observations are in order. First,  note that
$$\|Z\|_{\dot H^s}\simeq \biggl(\sum_{j\in\Z} 2^{2js}\|Z_j\|_{L^2}^2\biggr)^{1/2}.$$
Hence, in order to get a Sobolev estimate of $Z,$  it suffices  to  multiply 
\eqref{eq:Zj} by $2^{js}$ then to perform an $\ell^2$-summation on $j\in\Z.$ 
However, the second term of \eqref{eq:Zj} will not exactly give an estimate 
in some  space $L^1(0,t;\dot H^{\sigma})$   since the time integration has
been performed \emph{before} the summation with respect to $j$:
 one ends up in one of the Chemin-Lerner (or `tilde') spaces that have been introduced 
in \cite{ChL}. They  turn out to be delicate to manipulate  and 
not adapted to the critical regularity setting we have in mind. 

The second observation  is that, owing to the factor $\min(1,2^{2j}),$ in order  to track as much information as possible, it 
is suitable  to work with \emph{different} 
regularity exponents for low and high frequencies. 

Putting the two observations together, this motivates us to multiply \eqref{eq:Zj} 
by $2^{js}$ with a different value of the `regularity exponent' $s$ for negative and positive $j$'s, 
then to perform  an $\ell^1$ summation with respect to $j.$ 
The advantage of  $\ell^1$ summation  -- that corresponds to Besov norms \emph{with last index $1$} --
is that one can freely exchange time integration
and summation on $j.$ 
Taking  into account  the possible difference of regularity between the low and high frequencies 
 leads us to   introduce for all pair $(s,s')\in\R^2$ the \emph{hybrid Besov space} $\wt B^{s,s'}_{2,1},$ 
 that is   the set of all tempered distributions $z$ such that 
\begin{equation}\label{eq:hybrid}
\|z\|_{\wt B^{s,s'}_{2,1}} \triangleq  \sum_{j<0} 2^{js}\|\ddj z\|_{L^2} + \sum_{j\geq0} 2^{js'}\|\ddj z\|_{L^2}<\infty\andf
\lim_{j\to-\infty} \|\chi(2^{-j}D)z\|_{L^\infty} =0.\end{equation}
Above, $\chi$ stands for a compactly supported smooth function on $\R^d$ such that $\chi(0)=1,$
and the condition on $\chi(2^{-j}D)z$ implies that $z$ has to tend to $0$ at $\infty$ in 
the sense of tempered distributions\footnote{This is a way to rule out polynomials from 
homogeneous Besov spaces, otherwise one would have to work modulo 
polynomials which is not suitable when studying PDEs.}. Classical homogenous Besov spaces correspond to $s=s'$ and will be denoted by $\dot B^s_{2,1}.$
\smallbreak
In what follows, it will be sometimes convenient to use 
the following notation  for all $\sigma\in\R$:
$$\|z\|^\ell_{\dot B^\sigma_{2,1}}\triangleq \sum_{j<0} 2^{j\sigma}\|\ddj z\|_{L^2}\andf
\|z\|^h_{\dot B^\sigma_{2,1}}\triangleq \sum_{j\geq0} 2^{j\sigma}\|\ddj z\|_{L^2}.$$
\smallbreak
Even though most of the functions we shall consider here will have range in the set of  vectors or even matrices, 
we shall keep the same notation for Besov spaces pertaining to this case. 
\medbreak
Now, multiplying \eqref{eq:Zj} by $2^{js}$ (resp. $2^{js'}$) for $j\leq1$ (resp. $j\geq0$) and summing up on $j\leq1$ (resp.  $j\geq0$) leads to
\begin{eqnarray}\label{eq:Zlf} \|Z(t)\|_{\dot B^s_{2,1}}^\ell + \int_0^t\|Z\|_{\dot B^{s+2}_{2,1}}^\ell\,d\tau
&\!\!\!\leq\!\!\!&  2\biggl(\|Z_0\|_{\dot B^s_{2,1}}^\ell + \int_0^t\|F\|_{\dot B^{s}_{2,1}}^\ell\,d\tau\biggr),\\\label{eq:Zhf}
 \|Z(t)\|_{\dot B^{s'}_{2,1}}^h + \int_0^t\|Z\|_{\dot B^{s'}_{2,1}}^h\,d\tau
&\!\!\!\lesssim\!\!\!& 2\biggl(\|Z_0\|_{\dot B^{s'}_{2,1}}^h + \int_0^t\|F\|_{\dot B^{s'}_{2,1}}^h\,d\tau\biggr)\cdotp\end{eqnarray}
Hence, putting together those two inequalities yields 
\begin{equation}\label{eq:Ztotal}
 \|Z(t)\|_{\wt B^{s,s'}_{2,1}} + \int_0^t\bigl(\|Z\|_{\dot B^{s+2}_{2,1}}^\ell
 +\|Z\|_{\dot B^{s'}_{2,1}}^h\bigr)\,d\tau \leq 2\biggl(  \|Z_0\|_{\wt B^{s,s'}_{2,1}}
 +\int_0^t \|F\|_{\wt B^{s,s'}_{2,1}}\,d\tau\biggr)\cdotp\end{equation}
Since a part of the solution experiences direct dissipation, one can 
suspect  the low frequency integrability we get in this way to be not  optimal. 
Recovering better integrability  for a part of the solution is the goal of the next subsection.


\subsection{The damped mode}

Assume that the system has an orthogonal block structure, that is independent of the frequency, namely
$${\rm rank} B_\omega \overset\perp \oplus {\rm Ker} B_\omega=\C^n\quad\hbox{for all }\ \omega\in\S^{n-1},$$
with $M\triangleq {\rm Ker} B_\omega$ independent of $\omega.$ 
\medbreak
Denote by $\cP$ the orthogonal projector onto $M^\perp$ and set 
\begin{equation}\label{eq:W0} W\triangleq \cP(A+B)(D)Z.\end{equation}
Since $\cP$ and $B$ commute, we  get the following equation for $W$:
$$\d_tW+B(D)W=\cP(A+B)(D)F-\cP A(D)(A+B)(D)Z.$$
Because
$$\cP A(D) B(D)Z= \cP A(D) \cP B(D) Z=\cP A(D) W- (\cP A(D))^2Z,$$
this may be rewritten: 
\begin{equation}\label{eq:W} 
\d_tW+B(D)W=\cP(A+B)(D)F -\cP A(D) W +(\cP A(D))^2Z -\cP A^2(D)Z.
\end{equation} 
As $A(D)$ and $B(D)$ are of order $1$ and $0,$ respectively, 
 multipliers of orders  $1$ and $2,$  act  on $W$ and
$Z$ in the right-hand side. Hence the low frequencies of the corresponding terms are expected to be negligible
compared to the left-hand side of~\eqref{eq:W}. 
\medbreak
To make this heuristics rigorous,  let us look at the equation for  $W_j\triangleq \ddj W,$  namely
\begin{equation}\label{eq:Wj} 
\d_tW_j+B(D)W_j=\cP(A+B)(D)F_j-\cP A(D) W_j+(\cP A(D))^2Z_j -\cP A^2(D) Z_j.
\end{equation} 
Taking the Hermitian scalar product in $\C^n$ with $W_j,$ using \eqref{eq:positive}, the fact that
$B(D)$  is $0$-order and that $A(D)$ is $1$-st order yields
$$
\frac12\frac d{dt}|\wh W_j|^2+|\wh W_j|^2\leq C\Bigl((1+|\xi|) |\wh F_j|+|\xi||\wh W_j|+|\xi|^2|\wh Z_j|\Bigr)|\wh W_j|.
$$
Hence, integrating on $\R^d$ and taking advantage of the Fourier-Plancherel theorem yields:
$$
\frac12\frac d{dt}\|W_j\|_{L^2}^2+ \|W_j\|_{L^2}^2 \leq C\|W_j\|_{L^2}\Bigl((1+2^j)\|F_j\|_{L^2} + 2^j\|W_j\|_{L^2}
 + 2^{2j}\|Z_j\|_{L^2}\Bigr)$$
from which we eventually get for all $t\geq0$ and $j\in\Z,$ owing to Lemma \ref{SimpliCarre},
$$\displaylines{\|W_j(t)\|_{L^2}+\int_0^t\|W_j\|_{L^2}\,d\tau\leq \|W_j(0)\|_{L^2}
\hfill\cr\hfill+ C(1+2^j)\int_0^t\|F_j\|_{L^2}\,d\tau 
 + C2^{j}\int_0^t\|W_j\|_{L^2}\,d\tau + C2^{2j}\int_0^t\|Z_j\|_{L^2}\,d\tau.}$$
Therefore, if we multiply by $2^{js}$ and sum up on $j\leq j_0$ with $j_0$ chosen 
so that $C2^{j_0}\leq 1/2,$ then we end up with
$$\displaylines{\sum_{j\leq j_0}2^{js}\|W_j(t)\|_{L^2}+\frac 12\int_0^t\sum_{j\leq j_0}2^{js}\|W_j\|_{L^2}\,d\tau
\leq \sum_{j\leq j_0} 2^{js}\|W_j(0)\|_{L^2}
\hfill\cr\hfill+ C\int_0^t\sum_{j\leq j_0} 2^{js}\|F_j\|_{L^2}\,d\tau 
 + C\sum_{j\leq j_0} \int_0^t2^{j(s+2)}\|Z_j\|_{L^2}\,d\tau.}$$
 The last term may be controlled by the data according to \eqref{eq:Zlf}. 
 Furthermore, 
 $\|W_j\|_{L^2} \lesssim \|Z_j\|_{L^2}$ for all $j<0,$ and 
$2^{j(s+2})\simeq 2^{js}$ for $j_0\leq j<0.$ 
Hence the above inequality still holds if one sums up to $j=0$. 
In the end, this allows us to get the following additional bound: 
$$ \|W(t)\|_{\dot B^{s}_{2,1}}^\ell+ \int_0^t\|W\|_{\dot B^{s}_{2,1}}^\ell\,d\tau \lesssim 
\|Z_0\|_{\dot B^s_{2,1}}^\ell+\int_0^t\|F\|_{\dot B^s_{2,1}}^\ell\,d\tau.$$
Let us finally look at  the part of $Z$ that undergoes direct dissipation, namely
 $Z_2\triangleq \cP Z.$  We  claim that, as expected,   the low frequencies of $Z_2$ have
 better time integrability than the overall solution $Z.$ 
 Indeed, observing that $B(D)Z_2=W-\cP A(D)Z$
and   that  $\cP B(D)$  (restricted to functions defined on $M$) is invertible,
we may write 
$$
Z_2=((\cP B)(D))^{-1} W-((\cP B)(D))^{-1} \cP A(D) Z.$$
Hence, since $(\cP B)(D)$ (resp. $A(D)$) is a $0$-order (resp. $1$-st order) Fourier multiplier, we may write
$$\|Z_2\|^\ell_{\dot B^{s+1}_{2,1}} \lesssim \|W\|^\ell_{\dot B^{s+1}_{2,1}}  +
\|Z\|^\ell_{\dot B^{s+2}_{2,1}} \andf
\|Z_2\|^\ell_{\dot B^{s}_{2,1}} \lesssim \|W\|^\ell_{\dot B^{s}_{2,1}}  +
\|Z\|^\ell_{\dot B^{s+1}_{2,1}}.$$ 
Then,  remembering  \eqref{eq:Zlf} and using H\"older inequality and 
interpolation in Besov spaces when needed yields
$$\|Z_2\|_{L^2(\R_+;\dot B^{s}_{2,1})}^\ell + \|Z_2\|_{L^1(\R_+;\dot B^{s+1}_{2,1})}^\ell 
 \lesssim \|Z_0\|_{\dot B^s_{2,1}}^\ell+\int_0^t\|F\|_{\dot B^s_{2,1}}^\ell\,d\tau.$$
 This has to be compared by the following (optimal) inequality for $Z$:
 $$\|Z\|_{L^2(\R_+;\dot B^{s+1}_{2,1})}^\ell + \|Z_2\|_{L^1(\R_+;\dot B^{s+2}_{2,1})}^\ell 
 \lesssim \|Z_0\|_{\dot B^s_{2,1}}^\ell+\int_0^t\|F\|_{\dot B^s_{2,1}}^\ell\,d\tau.$$

\subsection{An  $L^p$ approach}

In this part, we are going to show that under slightly stronger structure 
assumptions\footnote{That are in particular satisfied by the linearized Euler equations
with relaxation.}  on the linear 
system \eqref{eq:Zlinear} than those that have been made so far,  it is possible to bound 
 the low frequencies of the solution on functional spaces built on $L^p$ for any $p\in[1,\infty].$
  This unusual setting is in sharp contrast with  the non dissipative case. 
  In fact, as pointed out  by P. Brenner in \cite{Brenner}, 
  apart from the notable exception of the transport equation, `most' 
   first order  `purely' hyperbolic  systems are ill-posed  in $L^p$ if $p\not=2.$ 
   It turns out that for nonlinear  partially dissipative systems satisfying the structure
   assumptions of this part, it is also possible to use, at least partially, 
   an $L^p$ type framework (see details  in \cite{CBD1,CBD3}). 
      This offers  one more degree of freedom in the choice of solutions 
      spaces allowing not only to prescribe weaker smallness conditions for global well-posedness, 
      but also to get  more  accurate informations on the qualitative properties of the constructed solutions. 
 
\medbreak
 In order to proceed, let us assume without  loss of generality that $M=\R^{n_1}\times\{0\}$ and 
 decompose $Z\in\R^n$ into \begin{small}$\begin{pmatrix} Z_1\\Z_2\end{pmatrix}$\end{small}.
 For expository purpose, further assume that there is no source term ($F=0$). 
 Then, System \eqref{eq:Z} may be rewritten by blocks as follows: 
  \begin{equation}\label{eq:Zblock}\frac d{dt}\begin{pmatrix} Z_1\\Z_2\end{pmatrix} +\begin{pmatrix} A_{11}(D)&A_{12}(D)\\A_{21}(D)&A_{22}(D)\end{pmatrix}
  \begin{pmatrix} Z_1\\Z_2\end{pmatrix}+\begin{pmatrix} 0\\B_{22}(D)Z_2\end{pmatrix}=
  \begin{pmatrix}0\\0\end{pmatrix},\end{equation}
  where the $0$-order Fourier multiplier $B_{22}(D)$   has symbol in $\cM_{n_2}(\R),$ and so on.  
  \smallbreak
In the spirit of the computations of the previous paragraph, let us introduce
\begin{equation}\label{eq:WW}
W\triangleq  Z_2+(B^{-1}_{22}A_{21})(D) Z_1 +(B_{22}^{-1}A_{22})(D)Z_2.\end{equation}
This definition of a damped mode is  consistent  with the one we had
before: we just applied to \eqref{eq:W0}  the $0$-order operator $(B_{22}(D))^{-1}$
that corresponds to the inverse of  $\cP B(D)$  restricted to  $M.$  
Now, we note that 
$$\d_tZ_2+B_{22}(D)W=0$$
and that from the definition of $Z$, we have
$$\d_tW+B_{22}(D)W = (B_{22}^{-1}A_{11})(D)\d_tZ_1+(B_{22}^{-1}A_{22})(D)\d_tZ_2.$$
Hence, using   System \eqref{eq:Zblock} for computing $\d_tZ,$
 we get the following  equation:
\begin{equation}\label{eq:equationW}\d_tW+B_{22}(D)W=-(B_{22}^{-1}A_{21})(D)\bigl(A_{11}(D)Z_1+A_{12}(D)Z_2\bigr)
-(B_{22}^{-1}A_{22}B_{22})(D)W.\end{equation}
Rewriting the equation of  $Z_1$ in terms of $W$ yields 
\begin{equation}\label{eq:equationZ1}\d_tZ_1 +\bigl(A_{11}(D)-(A_{12}B_{22}^{-1}A_{21})(D)\bigr)Z_1= (A_{12}B_{22}^{-1}A_{22})(D)Z_2
-A_{12}(D)W.\end{equation}
In order to pursue our analysis, we make the following assumption: 
\begin{equation}A_{11}(D)\equiv0\andf \cA(D)\triangleq -(A_{12}B_{22}^{-1}A_{21})(D)\quad\hbox{is a positive
operator}.\end{equation}
By positive, we mean that the symbol $A_{12}B_{22}^{-1}A_{21}$ has range in the set 
of positive Hermitian matrices of size $n_2.$ 
For this particular structure,  the above hypothesis turns out to be equivalent to Condition (SK) (see Lemma \ref{l:elliptic}). 
\smallbreak
Then, after applying $\ddj$ to \eqref{eq:equationW}  and for \eqref{eq:equationZ1},  we obtain:
\begin{equation}\label{eq:Wp}\left\{\begin{aligned}
&\d_tZ_{1,j}+\cA(D)Z_{1,j}=  (A_{12}B_{22}^{-1}A_{22})(D)Z_{2,j}
-A_{12}(D)W_j,\\
&\d_tW_j+B_{22}(D)W_j=-(B_{22}^{-1}A_{21}A_{12})(D)Z_{2,j}-(B_{22}^{-1}A_{22}B_{22})(D)W_j.\end{aligned}\right.\end{equation}
Using Duhamel formula for computing $Z_{1,j}$ from the first equation of \eqref{eq:Wp}, we get
$$Z_{1,j}(t)= e^{-t\cA(D)} Z_{1,j}(0)+\int_0^te^{-(t-\tau)\cA(D)}\bigl( (A_{12}B_{22}^{-1}A_{22})(D)Z_{2,j}(\tau)-A_{12}(D)W_j(\tau)\bigr)d\tau.$$
Since $\cA(D)$ is second order positive and satisfies the assumptions of  Lemma \ref{l:sg},  there exist two constants $c$ and  $C$ such that the following bound holds:
\begin{equation}\label{eq:boundALp}
\|e^{-t\cA(D)}\ddj z\|_{L^p(\R^{n_1};\R^{n_1})}\leq Ce^{-c2^{2j}t} \|\ddj z\|_{L^p(\R^{n_1};\R^{n_1})},\qquad j\in\Z.
\end{equation}
Then, we get from Bernstein inequality \eqref{eq:bernstein3},
 remembering that all the blocks of $A(D)$ are homogeneous
multipliers of degree $1$ and that $B_{22}^{-1}(D)$ is homogeneous of degree $0,$  
$$\|Z_{1,j}(t)\|_{L^p} \lesssim e^{-c2^{2j}t} \|Z_{1,j}(0)\|_{L^p} + \int_0^t e^{-c2^{2j}(t-\tau)}
\bigl(2^{2j}\|Z_{2,j}(\tau)\|_{L^p} + 2^j\|W_j(\tau)\|_{L^p}\bigr)\,d\tau,$$
whence taking the supremum  or the integral on $[0,t],$ 
$$\|Z_{1,j}(t)\|_{L^p} + 2^{2j}\int_0^t\|Z_{1,j}\|_{L^p}\,d\tau
\lesssim\|Z_{1,j}(0)\|_{L^p} +\int_0^t\bigl(2^{2j}\|Z_{2,j}\|_{L^p} + 2^j\|W_j\|_{L^p}\bigr)\,d\tau.$$
Similarly, Lemma \ref{l:elliptic} guarantees that we have 
\begin{equation}\label{eq:boundBLp}
\|e^{-tB_{22}(D)}\ddj z\|_{L^p(\R^{n_2};\R^{n_2})}\leq Ce^{-ct} \|\ddj z\|_{L^p(\R^{n_2};\R^{n_2})},\qquad j\in\Z,
\end{equation}
which allows to get eventually  
$$\|W_j(t)\|_{L^p} + \int_0^t\|W_j\|_{L^p}\,d\tau \lesssim \|W_j(0)\|_{L^p}+ 2^{2j}\int_0^t\|Z_j\|_{L^p}\,d\tau
+ 2^j\int_0^t\|W_j\|_{L^p}\,d\tau.$$
Owing to the factor $2^j,$  there exists an integer $j_0\in\Z$ so that the last term may be absorbed by 
the left-hand side for all $j\leq j_0.$ 
Hence, multiplying by $2^{js}$ then summing up on $j\leq j_0$ yields, with the notation
$\|z\|^{\ell,j_0}_{\dot B^s_{p,1}}\triangleq \sum_{j\leq j_0} 2^{j\sigma}\|\ddj z\|_{L^p},$ 
\begin{equation}\label{eq:Wt} \|W(t)\|_{\dot B^s_{p,1}}^{\ell,j_0} +\int_0^t\|W\|_{\dot B^s_{p,1}}^{\ell,j_0}\,d\tau
 \lesssim \|W_0\|_{\dot B^s_{p,1}}^{\ell,j_0}+ \int_0^t\|Z\|_{\dot B^{s+2}_{p,1}}^{\ell,j_0}\,d\tau\end{equation}
while the inequality for $Z_1$ gives us 
 \begin{equation}\label{eq:Z1}\|Z_1(t)\|_{\dot B^s_{p,1}}^{\ell,j_0} +\int_0^t\|Z_1\|_{\dot B^{s+2}_{p,1}}^{\ell,j_0}\,d\tau
 \lesssim \|Z_{1,0}\|^{\ell,j_0}_{\dot B^s_{p,1}}+ \int_0^t\bigl(\|Z_2\|_{\dot B^{s+2}_{p,1}}^{\ell,j_0}
 +\|W\|_{\dot B^{s+1}_{p,1}}^{\ell,j_0}\bigr)d\tau.\end{equation}
The definition of $W$ in \eqref{eq:WW} ensures that for all $j\leq j_0$ (with negative enough $j_0$), there holds that
\begin{equation}\label{eq:Z2}
\|W_j\|_{L^p}\lesssim \|Z_{2,j}\|_{L^p} + 2^j\|Z_{1,j}\|_{L^p}\andf
\|Z_{2,j}\|_{L^p}\lesssim \|W_{j}\|_{L^p} + 2^j\|Z_{1,j}\|_{L^p}.\end{equation}
Hence, adding up $\ep\cdot$\eqref{eq:Wt} to \eqref{eq:Z1} with $\ep$ small enough and 
negative enough $j_0$,  we conclude that 
$$\|Z(t)\|_{\dot B^s_{p,1}}^{\ell,j_0}+ \int_0^t\bigl(\|Z_1\|^{\ell,j_0}_{\dot B^{s+2}_{p,1}}+ \|W\|_{\dot B^s_{p,1}}^{\ell,j_0}\bigr)\,d\tau
\lesssim \|Z_0\|_{\dot B^s_{p,1}}^{\ell,j_0}.$$
Of course,  combining with \eqref{eq:Z2} yields also 
 $$\int_0^t \|Z_2\|^{\ell,j_0}_{\dot B^{s+1}_{p,1}}\,d\tau\lesssim \|Z_0\|_{\dot B^s_{p,1}}^{\ell,j_0}.$$
By the same token, if we consider a source term $F$ in \eqref{eq:Zblock}, one gets the following bound:
$$\|Z(t)\|_{\dot B^s_{p,1}}^{\ell,j_0}+ \int_0^t\bigl(\|Z_1\|^{\ell,j_0}_{\dot B^{s+2}_{p,1}}+
 \|Z_2\|^{\ell,j_0}_{\dot B^{s+1}_{p,1}}+ \|W\|_{\dot B^s_{p,1}}^{\ell,j_0}\bigr)\,d\tau
\lesssim \|Z_0\|_{\dot B^s_{p,1}}^{\ell,j_0}+ \int_0^t \|F\|^{\ell,j_0}_{\dot B^{s}_{p,1}}\,d\tau,$$
which is actually the same as the one we proved before for $p=2.$
\smallbreak
At the linear level, there is no restriction on the value of $p$: it can be any element of $[1,\infty].$
Reverting to the initial nonlinear system \eqref{eq:Z}, it is possible to work out a functional framework 
of $L^p$ type for the low frequencies of the solution. However, owing to the interactions between 
the low  and high frequencies  through the nonlinear terms, 
there are some restrictions on $p.$ The most obvious one  is that, if combining Bernstein and H\"older
inequalities for estimating  the medium frequencies 
in a $L^2$ type space of a product of low frequencies that belong to a $L^p$ type space, one needs to have
 $p\in[2,4].$  In high dimension, there are stronger restrictions on $p.$ 
The reader is referred to \cite{CBD1,CBD3} for more details and complete statements.


\section{Global existence in the critical regularity setting}\label{s:2}

The principal aim of this section is to prove the global existence of strong solutions for \eqref{GEQSYM}
supplemented with initial data that are  a perturbation of a constant state $\bar V$ 
satisfying   Condition (SK). For  notational simplicity, we  assume that $\bar V=0$ so that the system under consideration  
reads\footnote{The reader is  referred to \cite{CBD2} for the proof of similar results for 
more general symmetrizable quasilinear
partially dissipative hyperbolic systems satisfying (SK).}  
\begin{equation}\label{eq:ZZ}\d_tZ +\sum_{k=1}^d A^k(Z)\cdot\d_k Z + BZ=0.\end{equation}
It is assumed 
that the (smooth) given functions $A^1,\cdots,A^d$ range in the set of  $n\times n$ real symmetric matrices, and 
that $B=$\begin{small}$\begin{pmatrix} 0&0\\ 0&L_{2}\end{pmatrix}$\end{small} with $L_{2}\in GL_{n_2}(\R)$
satisfying for some $c>0,$
\begin{equation}\label{eq:B22}
L_{2}z\cdot z \geq c|z|^2,\qquad z\in\R^{n_2}.\end{equation}
Set $A(\xi)\triangleq i\sum_{k=1}^d\xi_k \bar A^k$  with $\bar A^k\triangleq A^k(0),$ 
and $B(\xi)\triangleq B.$ 
According to the linear analysis that was performed in the previous paragraph 
in the context of System \eqref{eq:ZZ}, Condition (SK) is equivalent to: 
  \begin{equation}\label{eq:SK}  {\rm Rank}\; \begin{pmatrix} B(\xi)\\BA(\xi)\\\dots\\ BA^{n-1}(\xi)\end{pmatrix}=n\quad\hbox{for all}\
  \xi\in\R^d\setminus\{0\}.\end{equation}

\subsection{The main results} 

In order to find out  a suitable  functional framework  for solving \eqref{eq:ZZ},  let us temporarily 
consider a smooth solution $Z.$   Taking advantage of the symmetry of the matrices $A^k$ and  integrating by parts, one gets  the following `energy identity':
$$\frac12\frac d{dt} \|Z\|_{L^2}^2
-\frac12\sum_{k,\ell,m} \int_{\R^d} Z^\ell Z^m\d_k(A^k_{\ell m}(Z)) \,dx+\int_{\R^d} BZ\cdot Z\,dx=0.$$
Therefore,  combining  with \eqref{eq:B22} and Gronwall inequality, we discover that
$$\|Z(t)\|_{L^2}^2+c\int_0^t\|Z_2\|_{L^2}^2\,d\tau\leq \|Z_0\|_{L^2}^2 \exp\Bigl(C\int_0^t\|\nabla Z\|_{L^\infty}\,d\tau\Bigr)\cdotp$$
Hence, even for controlling the $L^2$ norm of the solution, a bound of $\nabla Z$ in $L^1_{loc}(\R_+;L^\infty)$ is needed. 
Since no gain of regularity can be expected on the whole solution (see \eqref{eq:Zhf}), 
we must assume that $Z_0$ belongs to a functional space $X$ that is embedded in the set of globally 
Lipschitz functions.  If $X=H^s$ then this embedding holds if and only if  $s>d/2+1.$  In the framework of Besov spaces with last index $1,$ one can reach the critical index $s=d/2+1,$ owing
 to the (critical) embedding
 \begin{equation}\label{eq:embed} \dot B^{\frac d2}_{2,1}(\R^d)\hookrightarrow \cC_b(\R^d). \end{equation}
 Hence,   $X$ must be a subspace of $\dot B^{\frac d2+1}_{2,1}.$ Consequently, we shall take $s'=1+d/2$ in \eqref{eq:Zhf}. 

As regards the value of the regularity exponent $s$ in \eqref{eq:Zlf} for the low frequencies, a natural candidate is 
  $s=-1+d/2$ since \eqref{eq:Zlf} and \eqref{eq:Zhf} together give us 
a control of $Z$ in $L^1(\R_+;\dot B^{\frac d2+1}_{2,1})$ (provided we succeed in bounding 
 in $L^1(\R_+;\dot B^{\frac d2-1}_{2,1})$ the nonlinear  term $F$), 
and thus of $\nabla Z$ in $L^1(\R_+;L^\infty)$.  
Having at our disposal global $L^1$-in-time estimates  for the solution  will be 
particularly comfortable for further analysis in contrast with the classical `Sobolev' approaches
for partially dissipative systems where only $L^2$-in time estimates are available. 

To make a long story short, a good candidate for a solution space is    
the set of functions $Z$ in $\cC^1_b(\R_+\times\R^d;\R^n)$ such that
$$
Z^\ell\in \cC_b(\R_+;\dot B^{\frac d2-1}_{2,1})\cap L^1(\R_+;\dot B^{\frac d2+1}_{2,1})\andf
Z^h\in \cC_b(\R_+;\dot B^{\frac d2+1}_{2,1})\cap L^1(\R_+;\dot B^{\frac d2+1}_{2,1}).$$
According to the  linear analysis presented before, one can expect to get  additional informations for low frequencies, through the damped mode $W$ defined in \eqref{eq:WW}, that is essentially equivalent to $\d_tZ_2$ in our context. 
We will eventually obtain the following result that will be proved in the next subsection. 
\begin{thm}\label{ThmGlobal}  Let the Conditions \eqref{eq:B22} and \eqref{eq:SK} be  in force
and assume that $d\geq2.$ Then, there exists a positive constant $\alpha$ 
such that for all   $Z_0\in \wt{B}^{\frac{d}{2}-1,\frac{d}{2}+1}_{2,1}$  satisfying 
 \begin{equation}\label{eq:smalldata}
\cZ_0\triangleq \|Z_0\|_{\wt{B}^{\frac{d}{2}-1,\frac{d}{2}+1}_{2,1}} \leq \alpha,
\end{equation} System \eqref{eq:ZZ}   supplemented 
with initial data $Z_0$ admits a unique global-in-time solution $Z$ in the set 
$$E\triangleq\bigl\{Z\in \mathcal{C}_b(\mathbb{R}_+;\wt {B}^{\frac{d}{2}-1,\frac{d}{2}+1}_{2,1}),\:
Z\in L^1(\mathbb{R}_+;\dot{B}^{\frac{d}{2}+1}_{2,1})\!\andf\! \d_tZ_2\!\in\! L^1(\mathbb{R}_+;\dot{B}^{\frac{d}{2}-1}_{2,1})\bigr\}\cdotp$$
Moreover,  there exist an explicit  Lyapunov functional, equivalent to $\|Z\|_{\wt {B}^{\frac{d}{2}-1,\frac{d}{2}+1}_{2,1}}$  and  a  constant $C$ depending only 
on  the matrices $A^k$ and on $L_2$, and such that
\begin{equation} \label{eq:X0}
\cZ(t)\leq C\cZ_0\quad\hbox{for all }\ t\geq 0\end{equation} 
where\footnote{Whenever $X$ is a Banach space, $p\in[1,\infty]$ and $T\geq0,$ notation
$\|\cdot\|_{L_T^p(X)}$ designates the Lebesgue norm $L^p$ 
of  functions on $[0,T]$ with values in $X.$}
\begin{multline}\label{eq:X}\cZ(t)\triangleq\|Z\|^\ell_{L^\infty_t(\dot{B}^{\frac{d}{2}-1}_{2,1})}+\|Z\|^h_{L^\infty_t(\dot{B}^{\frac{d}{2}+1}_{2,1})} +\|Z\|_{L^1_t(\dot{B}^{\frac{d}{2}+1}_{2,1})}\\+\|\d_tZ_2\|_{L^1_t(\dot{B}^{\frac{d}{2}-1}_{2,1})}
 +\|Z_2\|^\ell_{L^1_t(\dot{B}^{\frac{d}{2}}_{2,1})}
 +\|Z_2\|^\ell_{L^2_t(\dot{B}^{\frac{d}{2}-1}_{2,1})}.\end{multline}
 \end{thm}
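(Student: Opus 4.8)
The plan is to combine the linear a priori estimates of the previous section with a nonlinear bootstrap argument based on the Lyapunov functional, closing the estimate via the smallness of $\cZ_0$. First I would set up the iteration scheme: construct a sequence of approximate solutions $(Z^m)_{m\in\N}$, e.g. by a Friedrichs-type truncation in frequency or by solving the linear system \eqref{eq:Zlinear} with $F^m\triangleq-\sum_k(A^k(Z^{m-1})-\bar A^k)\d_k Z^{m-1}$, starting from $Z^0\equiv0$. Each $Z^m$ is globally defined, and the goal is to prove a uniform bound $\cZ^m(t)\le C\cZ_0$ for all $m$ and $t$, with $\cZ$ as in \eqref{eq:X}.

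The heart of the argument is the uniform estimate. I would feed the linear estimates into the functional $\cZ(t)$ piece by piece: inequality \eqref{eq:Ztotal} controls the low-frequency $\dot B^{d/2-1}_{2,1}$-norm, the high-frequency $\dot B^{d/2+1}_{2,1}$-norm, and the $L^1_t(\dot B^{d/2+1}_{2,1})$-norm of $Z$ in terms of $\cZ_0$ plus $\int_0^t\|F\|_{\wt B^{d/2-1,d/2+1}_{2,1}}\,d\tau$; the damped-mode analysis of Subsection on \emph{The damped mode} controls $\|Z_2\|^\ell_{L^1_t(\dot B^{d/2}_{2,1})}$, the better-integrability bound gives $\|Z_2\|^\ell_{L^2_t(\dot B^{d/2-1}_{2,1})}$, and from $\d_tZ_2=-B_{22}(D)W$ (or directly from the equation) one reads off $\|\d_tZ_2\|_{L^1_t(\dot B^{d/2-1}_{2,1})}\lesssim\|W\|^\ell_{L^1_t(\dot B^{d/2}_{2,1})}+(\hbox{high-frequency terms})$. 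It then remains to estimate the nonlinear source $F=-\sum_k(A^k(Z)-\bar A^k)\d_k Z$ in $\wt B^{d/2-1,d/2+1}_{2,1}$ in space, integrated in time. Using the standard paradifferential product and composition estimates in Besov spaces (recalled in the Appendix), together with $A^k(Z)-\bar A^k=\int_0^1 DA^k(\tau Z)\,d\tau\cdot Z$ and the algebra/embedding $\dot B^{d/2}_{2,1}\cdot\dot B^{d/2}_{2,1}\hookrightarrow\dot B^{d/2}_{2,1}$, one bounds $\int_0^t\|F\|_{\wt B^{d/2-1,d/2+1}_{2,1}}\,d\tau$ by $C\,\cZ(t)^2$ (here the precise interplay of low/high exponents and the condition $d\ge2$ matter, exactly as flagged after \eqref{eq:Z2}; the medium-frequency product estimates are where one would need to be careful). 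This yields $\cZ(t)\le C\cZ_0+C\cZ(t)^2$, and a standard continuity/bootstrap argument gives $\cZ(t)\le 2C\cZ_0$ as soon as $\cZ_0\le\alpha$ with $\alpha$ small enough, uniformly in $m$.

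With the uniform bound in hand, I would pass to the limit: the uniform control in $E$ plus the equation give uniform bounds on $\d_tZ^m$ in a negative-regularity space, hence strong compactness locally in $(t,x)$ via Aubin–Lions, which suffices to pass to the limit in the quasilinear term and produce a solution $Z\in E$ satisfying \eqref{eq:X0}. Uniqueness I would prove directly in the solution class: writing the equation for the difference $\delta Z=Z-\widetilde Z$ of two solutions, performing an $L^2$-type (or $\dot B^{d/2-1}_{2,1}$-type, to stay at a scaling-critical level — one loses one derivative, so a $\dot B^{d/2-1}_{2,1}\times\dot B^{d/2}_{2,1}$ stability estimate is the natural choice) energy estimate with the dissipation from $B$, and closing by Gronwall using that $\nabla Z,\nabla\widetilde Z\in L^1_{loc}(\R_+;L^\infty)$ (guaranteed by the $L^1_t(\dot B^{d/2+1}_{2,1})$ control and \eqref{eq:embed}). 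The continuity in time of $Z$ with values in $\wt B^{d/2-1,d/2+1}_{2,1}$ follows from the equation and the bounds, completing the proof.

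\textbf{Main obstacle.}
I expect the principal difficulty to be the nonlinear estimate of $F$ in the hybrid space $\wt B^{d/2-1,d/2+1}_{2,1}$ with the time integration, because the two different regularity exponents for low and high frequencies force a careful case analysis of high–high, high–low and low–high frequency interactions in the product $(A^k(Z)-\bar A^k)\d_k Z$; in particular controlling the medium frequencies of a product where one factor is only low-frequency-regular is precisely the point that restricts the dimension (here $d\ge2$) and would, in an $L^p$ variant, restrict $p$. A secondary subtlety is obtaining the sharp $L^2_t(\dot B^{d/2-1}_{2,1})$ and $L^1_t(\dot B^{d/2}_{2,1})$ bounds on $Z_2$ at the nonlinear level, i.e. checking that the damped-mode mechanism of the linear analysis survives the quasilinear perturbation with the same integrability gains; this requires that the source terms generated by the nonlinearity in the equation \eqref{eq:equationW} for $W$ be themselves controlled by $\cZ(t)^2$ in the appropriate low-frequency norm.
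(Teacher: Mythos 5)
Your overall architecture (uniform a priori bound for approximate solutions, bootstrap on smallness, compactness, stability estimate for uniqueness) matches the paper's, and your treatment of the low frequencies, of the damped mode, and of the final continuity argument closing $\cZ(t)\le C\cZ_0+C\cZ(t)^2$ is essentially what is done there. But there is a genuine gap in the high-frequency part of the uniform estimate, and it is the central difficulty of the theorem rather than a technical detail.

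You propose to control the whole functional by feeding $F=-\sum_k(A^k(Z)-\bar A^k)\d_kZ$ as a source term into the linear estimate \eqref{eq:Ztotal} and to bound $\int_0^t\|F\|_{\wt B^{d/2-1,d/2+1}_{2,1}}\,d\tau$ by $C\,\cZ(t)^2$. For the high frequencies this cannot work: bounding $\|F\|^h_{\dot B^{d/2+1}_{2,1}}$ requires $\|\nabla Z\|_{\dot B^{d/2+1}_{2,1}}$, i.e. $\|Z\|_{\dot B^{d/2+2}_{2,1}}$, one derivative more than the solution possesses (the functional $\cZ$ only controls $Z$ in $L^1_t(\dot B^{d/2+1}_{2,1})$). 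This is the classical loss of one derivative for quasilinear hyperbolic systems, and it also dooms the explicit iteration $F^m=-\sum_k(A^k(Z^{m-1})-\bar A^k)\d_kZ^{m-1}$: the bound on $Z^m$ would involve a higher norm of $Z^{m-1}$ and the scheme does not close in a fixed space. The paper circumvents this by never treating the quasilinear term as a source in high frequencies: it writes $\d_tZ_j+\sum_kA^k(Z)\d_kZ_j+BZ_j=\sum_k[A^k(Z),\ddj]\d_kZ$, keeps the variable-coefficient transport term on the left (handled by integration by parts thanks to the symmetry of $A^k$), and uses the commutator estimate, which gains exactly the missing derivative; $F$ only reappears in the cross term coming from the Lyapunov correction $r^{-1}\wt L_{r,\omega}$, where the prefactor $2^{-j}$ compensates the loss, so that an estimate of $F$ in $\dot B^{d/2}_{2,1}$ (not $\dot B^{d/2+1}_{2,1}$) suffices. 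Relatedly, the construction of approximate solutions should either use the genuinely quasilinear linearization ($A^k(Z^{m-1})$ acting on $\d_kZ^m$) or, as in the paper, truncate the low frequencies of the data and invoke the classical theory for symmetric hyperbolic systems together with the continuation criterion. Your ``main obstacle'' paragraph correctly identifies the low-frequency product-law issue behind the restriction $d\ge2$, but the decisive obstacle is the one above.
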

\medbreak
Choosing regularity $d/2-1$ for low frequencies has some disadvantages,  though: 
\begin{itemize}
\item it does not allow to treat the mono-dimensional case  since the low frequencies of the nonlinear terms  of type $DZ\times Z$ cannot be estimated 
in $L^1(\R_+;\dot B^{\frac d2-1}_{2,1})$ (this  is the needed regularity for the right-hand side
of \eqref{eq:Zlf}). 
Indeed, the numerical product does not map $\dot B^{\frac12}_{2,1}(\R)\times \dot B^{-\frac12}_{2,1}(\R)$ 
to $\dot B^{-\frac12}_{2,1}(\R).$  
\item it does not provide us with  uniform bounds in the high relaxation asymptotics
(see the beginning of Section \ref{s:relax} for more explanations).
\end{itemize}
\medbreak
Another possible choice  is  $s=d/2.$ Then,  the solution space becomes 
the set of $Z$ in $\cC^1_b(\R_+\times\R^d;\R^n)$ satisfying
$$Z^\ell\in \cC_b(\R_+;\dot B^{\frac d2}_{2,1})\cap L^1(\R_+;\dot B^{\frac d2+2}_{2,1})\andf
Z^h\in \cC_b(\R_+;\dot B^{\frac d2+1}_{2,1})\cap L^1(\R_+;\dot B^{\frac d2+1}_{2,1})$$
plus crucial informations from the damped mode that, in particular, will ensure that 
$$\nabla Z_2\in L^1(\R_+;\dot B^{\frac d2}_{2,1})\andf Z_2\in L^2(\R_+;\dot B^{\frac d2}_{2,1}).$$ 
This alternative framework allows to consider  initial data 
that are  less decaying at infinity (regularity  $\dot B^{\frac d2}_{2,1}$ for low frequencies
is less stringent than  $\dot B^{\frac d2-1}_{2,1}$),
to handle  the one-dimensional situation, and  to provide
crucial  uniform a priori bounds in the strong  relaxation limit. 
The only drawback is that  this alternative  framework  
requires seemingly  stronger structure assumptions on the system (that are nevertheless 
fulfilled by the compressible Euler equations).  
In order to specify them, let us rewrite System \eqref{eq:ZZ} by blocks as follows:
\begin{equation} \left\{ \begin{array}{l}\displaystyle \partial_tZ_1 + \sum_{k=1}^d\left(A_{11}^k(Z)\partial_{k}Z_1+A_{12}^k(Z)\partial_{k}Z_2\right)=0,\\ \displaystyle \partial_tZ_2 + \sum_{k=1}^d\left(A_{21}^k(Z)\partial_{k}Z_1+A_{22}^k(Z)\partial_{k}Z_2\right)+{L_2} Z_2=0. \end{array} \right. \label{GE}
\end{equation} 
Then, we need the following additional assumption:
\begin{enumerate}
\item[({\bf H3})]  For all $k\in\{1,\cdots,d\},$ $\bar A^k_{11}=0$ and $Z\mapsto A^k_{11}(Z)$ is linear
with respect to $Z_2.$
\end{enumerate}
Note that in the context of gas dynamics, the above assumption just means that there are no terms
like $\nabla \varrho$ or $\varrho\nabla\varrho$ in the density equation, which is indeed the case~!
 \begin{thm}\label{Thmd2}   In general dimension $d\geq1,$
 let the assumptions of Theorem \ref{ThmGlobal} concerning system \eqref{GEQSYM} be 
 in force and assume in addition that ({\bf H3}) holds true. 
  Then, there exists a positive constant $\alpha$ 
such that for all   $Z_0\in \wt {B}^{\frac{d}{2},\frac{d}{2}+1}_{2,1}$  satisfying 
 \begin{equation}\label{eq:smalldatabis}
\cZ'_0\triangleq  \|Z_0\|_{\wt{B}^{\frac{d}{2},\frac{d}{2}+1}_{2,1}} \leq \alpha,
\end{equation} System \eqref{GE}  supplemented with initial data $Z_0$  admits a unique global-in-time solution 
in the subspace $F$  of  functions $Z$ of $\mathcal{C}_b(\mathbb{R}_+;\wt{B}^{\frac{d}{2},\frac{d}{2}+1}_{2,1})$ such that
$$Z_2^\ell, Z^h\!\in\! L^1(\mathbb{R}_+;\dot{B}^{\frac{d}{2}+1}_{2,1}), \,\;\;\: Z_1^\ell\!\in\! L^1(\mathbb{R}_+;\dot{B}^{\frac{d}{2}+2}_{2,1})\!\andf\!\! \d_tZ_2\!\in\! L^1(\mathbb{R}_+;\dot{B}^{\frac{d}{2}}_{2,1}).$$
Moreover,   there exists an explicit  Lyapunov functional that is 
 equivalent to $\|Z\|_{\wt{B}^{\frac{d}{2},\frac{d}{2}+1}_{2,1}}$ 
and we have the following bound:
\begin{multline}\label{eq:Y}\cZ'(t)\leq C\cZ'_0
\quad\hbox{where}\quad
\cZ'(t)\triangleq\|Z\|_{L^\infty_t(\wt{B}^{\frac{d}{2},\frac{d}{2}+1}_{2,1})}+\|Z_1\|^\ell_{L^1_t(\dot{B}^{\frac{d}{2}+2}_{2,1})}\\
+\|Z_2\|^\ell_{L^1_t(\dot{B}^{\frac{d}{2}+1}_{2,1})}
+\|Z_2\|^\ell_{L^2_t(\dot{B}^{\frac{d}{2}}_{2,1})}+\|Z\|^h_{L^1_t(\dot{B}^{\frac{d}{2}+1}_{2,1})}+\|\d_tZ_2\|_{L^1_t(\dot{B}^{\frac{d}{2}}_{2,1})}.
\end{multline} \end{thm}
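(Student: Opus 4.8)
\textbf{Proof plan for Theorem \ref{Thmd2}.} The plan is to run the standard Friedrichs-type scheme (regularize the system, solve the approximate problems on maximal time intervals, derive uniform-in-$n$ a priori estimates, pass to the limit) but with the critical hybrid-Besov norm $\cZ'$ replaced by its \emph{Lyapunov-type} refinement, exactly as in the linear analysis of Section 1, so that the a priori bound closes at the critical regularity level. The heart of the matter is therefore the a priori estimate: assuming a smooth solution $Z$ exists on $[0,T]$ with $\cZ'(t)$ small, I want to prove $\cZ'(t)\le C\cZ'_0$ uniformly in $T$. First I would localize \eqref{GE} with $\ddj$, producing the block system for $(Z_{1,j},Z_{2,j})$ with commutator source terms $R_j^k\triangleq[\ddj,A^k(Z)\cdot]\d_k Z$ that, by the usual commutator estimates in Besov spaces (recalled in the Appendix), are controlled in $\dot B^{\frac d2}_{2,1}$ (resp. $\dot B^{\frac d2+1}_{2,1}$ for high frequencies) by $\|\nabla Z\|_{\dot B^{\frac d2}_{2,1}}\|Z\|_{\wt B^{\frac d2,\frac d2+1}_{2,1}}$ plus similar quadratic terms.

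For the \emph{high frequencies} ($j\ge 0$) I would use the energy method directly on \eqref{GE}: multiply the $\ddj$-localized system by $Z_j$, integrate, exploit the symmetry of the $\bar A^k$ and \eqref{eq:B22} to get a damping term $c\|Z_{2,j}\|_{L^2}^2$, and absorb the variable-coefficient and commutator contributions using smallness of $\cZ'$; this reproduces \eqref{eq:Zhf}-type bounds and gives $Z^h\in L^1_t(\dot B^{\frac d2+1}_{2,1})$ once the missing high-frequency dissipation of $Z_1$ is recovered through the damped mode (or, more simply here, through \eqref{eq:whZ}, since high frequencies see full exponential decay at rate $\sim 1$). For the \emph{low frequencies} ($j\le j_0$) I would work with the damped mode $W$ defined in \eqref{eq:WW}, using its evolution equation \eqref{eq:equationW} and the companion equation \eqref{eq:equationZ1} for $Z_1$; because hypothesis (H3) forces $\bar A_{11}^k=0$ and $A_{11}^k$ linear in $Z_2$, the operator $\cA(D)=-(A_{12}B_{22}^{-1}A_{21})(D)$ is genuinely second-order elliptic (equivalent to (SK), Lemma \ref{l:elliptic}), so $Z_1^\ell$ gains \emph{two} derivatives and lands in $L^1_t(\dot B^{\frac d2+2}_{2,1})$, while $W$ is directly damped at order $0$ and $Z_2^\ell$ inherits $L^1_t(\dot B^{\frac d2+1}_{2,1})$ and $L^2_t(\dot B^{\frac d2}_{2,1})$ integrability exactly as in the $L^p$ computation \eqref{eq:Wp}--\eqref{eq:Z2} (here with $p=2$). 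One must check that the nonlinear and variable-coefficient perturbations entering \eqref{eq:equationW}--\eqref{eq:equationZ1}, which differ from the linear symbols by factors of $Z$, can be estimated in $\dot B^{\frac d2}_{2,1}$ at low frequency by $\cZ' \cdot \cZ'$: this is where (H3) is again essential (it is what removes a dangerous $Z_1\partial Z_1$ interaction from the $Z_1$-equation and lets the product laws close with $s=d/2$ rather than $s=d/2-1$, curing the one-dimensional obstruction mentioned after Theorem \ref{ThmGlobal}).

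Combining the high- and low-frequency pieces, adding a small multiple of the low-frequency $W$-estimate to the $Z_1$-estimate (as in the passage to \eqref{eq:Z1}--\eqref{eq:Z2}) to absorb cross terms, and adding the whole thing to the high-frequency energy estimate, yields a closed inequality of the form $\cZ'(t)\le C\cZ'_0 + C\,\cZ'(t)^2$; a standard bootstrap/continuity argument then gives $\cZ'(t)\le 2C\cZ'_0$ for $\alpha$ small, which is \eqref{eq:Y}, and the functional one actually propagates (sum of the spectrally-localized Lyapunov functionals $L_{r,\omega}$ of \eqref{eq:Lomega}, together with the $W$-modulated low-frequency pieces) is by construction equivalent to $\|Z\|_{\wt B^{\frac d2,\frac d2+1}_{2,1}}$. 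Existence then follows by applying these uniform bounds to the approximate (e.g. spectrally truncated) solutions and passing to the limit via compactness, with the $L^1_t$-in-time control of $\nabla Z$ ensuring that the limit is genuinely $\cC^1_b$ and a classical solution; uniqueness is obtained by estimating the difference of two solutions in the lower-regularity space $\wt B^{\frac d2-1,\frac d2}_{2,1}$ (a loss of one derivative, standard for quasilinear hyperbolic systems, using that each solution is Lipschitz).

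\textbf{Main obstacle.} The delicate point is the low-frequency closure at the \emph{exact} critical index $s=d/2$: one must verify that every nonlinear term produced by freezing coefficients in \eqref{GE} — in particular those hidden inside the definition \eqref{eq:WW} of $W$ and inside \eqref{eq:equationW}--\eqref{eq:equationZ1} — is a genuine quadratic remainder estimable by $\cZ'\cdot\cZ'$ in $L^1_t(\dot B^{\frac d2}_{2,1})^{\ell}$, without any derivative loss and without needing the extra decay that $s=d/2-1$ would have provided; this is precisely what hypothesis (H3) is tailored to guarantee, and keeping careful track of which interactions it kills (versus which survive and must be bounded by product/commutator laws) is the technical crux of the argument.
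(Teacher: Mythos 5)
Your plan is correct and coincides with the strategy the paper itself indicates: the proof of Theorem \ref{ThmGlobal} (localized Lyapunov functionals, high-frequency energy method with commutators, damped mode, bootstrap, Friedrichs-type approximation and stability estimate) combined with the low-frequency $Z_1$/$W$ analysis of the ``$L^p$ approach'' subsection specialized to $p=2$, where (H3) makes $\cA(D)=-(A_{12}B_{22}^{-1}A_{21})(D)$ elliptic (Lemma \ref{l:elliptic}) and lets the product laws close at $s=d/2$ in every dimension $d\geq1$. The paper defers the detailed verification to \cite{CBD2}, but your road map, including the identification of (H3) as the hypothesis that turns the residual low-frequency nonlinearities into quadratic remainders, is the intended argument.
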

Theorem \ref{Thmd2} directly applies to the isentropic compressible Euler equations
with relaxation, written in terms of the sound speed $c$ and of $v$ 
(that is,  System \eqref{eq:eulerc}).
The result we get reads as follows: 
\begin{thm}\label{thm:euler} Let $\bar c>0,$ $d\geq1$ and $\gamma>1.$  
There exists a positive constant  $\alpha$ such that  for any  data $(c_0,v_0)$
such that $c_0-\bar c$ and $v_0$ belong to $\wt{B}^{\frac{d}{2},\frac{d}{2}+1}_{2,1},$ and satisfy
 \begin{equation}\label{eq:smalldataeuler}
\cA_0\triangleq  \|(c_0-\bar c,v_0)\|_{\wt{B}^{\frac{d}{2},\frac{d}{2}+1}_{2,1}} \leq \alpha,
\end{equation}
System  \eqref{eq:eulerc} with $\ep=1$ admits a unique global solution $(c,v)$ with 
$(c-\bar c,v)\in \mathcal{C}_b(\mathbb{R}_+;\wt{B}^{\frac{d}{2},\frac{d}{2}+1}_{2,1})$ that satisfies
\begin{multline}\label{eq:cv}\|(c-\bar c,v)\|_{L^\infty(\mathbb{R}_+;\wt{B}^{\frac{d}{2},\frac{d}{2}+1}_{2,1})}
+\|c-\bar c\|^\ell_{L^1(\mathbb{R}_+;\dot{B}^{\frac{d}{2}+2}_{2,1})}\\
+\|v\|^\ell_{L^1(\mathbb{R}_+;\dot{B}^{\frac{d}{2}+1}_{2,1})}
+\|v\|^\ell_{L^2(\mathbb{R}_+;\dot{B}^{\frac{d}{2}}_{2,1})}+\|(c-\bar c,v)\|^h_{L^1(\mathbb{R}_+;\dot{B}^{\frac{d}{2}+1}_{2,1})}+\|\d_tv\|_{L^1(\mathbb{R}_+;\dot{B}^{\frac{d}{2}}_{2,1})}
\leq C\cA_0.\end{multline}
\end{thm}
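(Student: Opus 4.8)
The plan is to deduce Theorem \ref{thm:euler} directly from Theorem \ref{Thmd2} by checking that the isentropic Euler system with relaxation, once written for $(c,v)$ and recentered at $(\bar c,0)$, belongs to the class \eqref{GE} and satisfies all the structural hypotheses (H1), (H2), (H3), the positivity \eqref{eq:B22} and Condition (SK) in the form \eqref{eq:SK}. First I would set $Z_1\triangleq c-\bar c$ (so $n_1=1$), $Z_2\triangleq v$ (so $n_2=d$, $n=d+1$), and rewrite \eqref{eq:eulerc} with $\ep=1$ as
\begin{equation*}
\left\{\begin{aligned}
&\d_t Z_1+(\wt\gamma\bar c)\,\div Z_2 + \bigl(Z_2\cdot\nabla Z_1+\wt\gamma Z_1\div Z_2\bigr)=0,\\
&\d_t Z_2+(\wt\gamma\bar c)\,\nabla Z_1 + Z_2 + \bigl(Z_2\cdot\nabla Z_2+\wt\gamma Z_1\nabla Z_1\bigr)=0.
\end{aligned}\right.
\end{equation*}
A harmless rescaling of $c$ (or of the space variable) normalizes $\wt\gamma\bar c$ to $1$, so this is exactly \eqref{GE} with $L_2=I_d$, hence \eqref{eq:B22} holds with $c=1$. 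The matrices $A^k(Z)$ are affine in $Z$ with values in the symmetric matrices (the principal symbol is the standard symmetric wave-type block $\begin{pmatrix}0&\wt\gamma\bar c\,\xi\\ \wt\gamma\bar c\,{}^t\!\xi&0\end{pmatrix}$ up to lower-order symmetric corrections coming from $Z_2\cdot\nabla$), so (H1) holds; $DH(\bar V)$ has spectrum $\{0\}$ (multiplicity $1$) and $\{-1\}$ (multiplicity $d$), giving (H2). For (H3) I would observe that $A^k_{11}(Z)$ is the coefficient of $\d_k Z_1$ in the $Z_1$-equation, which is the $k$-th component of $Z_2$; thus $\bar A^k_{11}=0$ and $Z\mapsto A^k_{11}(Z)$ is linear in $Z_2$, precisely (H3). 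The term $\wt\gamma Z_1\div Z_2$ lands in $A^k_{12}$, which is unconstrained.

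Next I would verify Condition (SK) via the Kalman criterion of Lemma \ref{l:SK}. With $A_\omega=i\begin{pmatrix}0&\omega\\{}^t\!\omega&0\end{pmatrix}$ (after normalization) and $B=\begin{pmatrix}0&0\\0&I_d\end{pmatrix}$ one computes $B_\omega A_\omega=i\begin{pmatrix}0&0\\{}^t\!\omega&0\end{pmatrix}$, so $\begin{pmatrix}B_\omega\\B_\omega A_\omega\end{pmatrix}$ has rank $d+1=n$ for every $\omega\in\S^{d-1}$; this is the computation already displayed in the excerpt for the linearized damped Euler equations, and it gives \eqref{eq:SK}. Equivalently one checks that the only eigenvector of $A_\omega$ lying in $\ker B_\omega$ is $0$: eigenvectors of $A_\omega$ have a nonzero first component or a velocity part parallel to $\omega$, neither of which is killed by $B$ unless it vanishes. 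So all hypotheses of Theorem \ref{Thmd2} are met.

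Then I would simply invoke Theorem \ref{Thmd2}: for $\cA_0=\|(c_0-\bar c,v_0)\|_{\wt B^{d/2,d/2+1}_{2,1}}\le\alpha$ there is a unique global solution $Z=(c-\bar c,v)$ in the space $F$, and the bound \eqref{eq:Y} translates verbatim into \eqref{eq:cv} once one notes that $\d_t Z_2$ and $\d_t v$ coincide and that $\|Z\|_{L^\infty_t(\wt B^{d/2,d/2+1}_{2,1})}$ is $\|(c-\bar c,v)\|_{L^\infty(\R_+;\wt B^{d/2,d/2+1}_{2,1})}$. The Lyapunov-functional statement is inherited directly from Theorem \ref{Thmd2}. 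A point worth a sentence is that Theorem \ref{Thmd2} is stated for $\bar V=0$, whereas here the constant state is $(\bar c,0)\ne0$; this is cosmetic since the change of unknown $Z=V-\bar V$ is exactly what \eqref{eq:ZZ} already encodes, and the coefficients $A^k(\bar V+Z)$ are smooth near $\bar V$ with $A^k_{11}$ still affine in $Z_2$ after the shift (the shift only adds constants to $A^k_{12},A^k_{21},A^k_{22}$, which (H3) does not constrain).

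The only genuinely non-routine point is the verification of (H3) together with (H1) in the shifted variables, i.e.\ making sure that writing the quasilinear Euler system in the $(c,v)$ variables really produces the block structure \eqref{GE} with a density equation free of $\nabla c$ and $c\nabla c$ terms and with symmetric $A^k$; this is where the choice of the sound-speed variable $c$ (rather than $\varrho$) is essential, and it is the step I would present with care. Everything else — (H2), \eqref{eq:B22}, the Kalman rank computation, and the final translation of norms — is immediate from the material already developed in Section \ref{s:2}.
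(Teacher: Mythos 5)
Your proposal is correct and follows exactly the route the paper takes: the paper proves Theorem \ref{thm:euler} simply by observing that Theorem \ref{Thmd2} ``directly applies'' to \eqref{eq:eulerc} written in the sound-speed variable, and your verification of (H1)--(H3), \eqref{eq:B22} and the Kalman rank condition \eqref{eq:SK} for $Z_1=c-\bar c$, $Z_2=v$, together with the verbatim translation of \eqref{eq:Y} into \eqref{eq:cv}, is precisely the checking that this invocation requires.
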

A similar statement  holds true for  the barotropic compressible Euler equations with general smooth pressure law 
$P$ satisfying $P'>0$ in the neighborhood of the reference density $\bar\varrho,$ 
 although one cannot `symmetrize'  the system any longer by using the sound speed. 
For more details, the reader may refer to  \cite{CBD2} where a  class of partially dissipative systems, more general than 
\eqref{GEQSYM}, is considered. 
 \medbreak  
In the rest of this section, we  focus on the proof of Theorem \ref{ThmGlobal}.  
The reader is  referred to \cite{CBD2} for more general systems   and for the proof of Theorem \ref{Thmd2}.  
A similar statement in the $L^p$ framework has been established in \cite{CBD3}.


\subsection{A priori estimates}

The overall strategy is to apply  the Littlewood-Paley truncation operator $\ddj$ to \eqref{eq:ZZ}, then to 
follow the method  that has been described in the previous section so as to get optimal estimates in $L^2$ for each 
dyadic block. Performing eventually a suitable weighted summation on $j$ will lead to the control 
of Besov norms of the solution, as stated in Theorem \ref{ThmGlobal}. 

Throughout, we assume that we are given a smooth and sufficiently decaying solution 
of \eqref{eq:ZZ} on $[0,T]\times\R^d$ such that 
\begin{equation}\label{eq:smallZ}
\sup_{t\in[0,T]}\|z(t)\|_{\dot B^{\frac d2}_{2,1}(\R^d)} \quad\hbox{is sufficiently small.}\end{equation}
We shall use repeatedly that, owing to the embedding \eqref{eq:embed}, the solution $Z$ is also small in $L^\infty([0,T]\times\R^d).$ 


\subsubsection*{$\bullet$ Low frequencies}

Let us denote    $Z_j\triangleq \ddj Z$  and $F_j\triangleq \ddj F,$ with 
\begin{equation}\label{eq:F} F\triangleq \sum_{k=1}^d (\bar A^k-A^k(Z))\d_kZ.\end{equation} 
We see that for all $j\in\Z,$ 
\begin{equation}\label{eq:ZjLF}\d_tZ_j+\sum_{k=1}^d \bar A^k\d_kZ_j +BZ_j=F_j.\end{equation}
Hence, taking the $L^2(\R^d;\R^n)$ scalar product with $Z_j$ and using  that the first order
terms are skew-symmetric yields
$$\frac 12\frac d{dt}\|Z_j\|_{L^2}^2 +\int_{\R^d} BZ_j\cdot Z_j\,dx=\int_{\R^d} Z_j\cdot F_j\,dx.$$
The term with $B$ may be bounded from below according to \eqref{eq:B22}. Hence, using Cauchy-Schwarz 
inequality for bounding the right-hand side delivers for some $c>0,$ 
$$\frac 12\frac d{dt}\|Z_j\|_{L^2}^2 + c\|Z_{2,j}\|_{L^2}^2 \leq\|F_j\|_{L^2}\|Z_j\|_{L^2}.$$
In order to recover the full  dissipation,  we proceed as in the previous section, introducing 
the functional $L_{r,\omega}$ defined in \eqref{eq:Lomega}. 
Adapting the computations therein to the case where the source term in \eqref{eq:Zlinear}
is nonzero, we get
for all $r>0$ and $\omega\in\S^{d-1}$ (with the notations of \eqref{eq:def}):
$$\displaylines{\frac d{dt} L_{r,\omega}(\wh Z_j)+\frac{\min(1,r^2)}{2}\sum_{k=0}^{n-1}\ep_k|B_\omega A_\omega^k\wh Z_j|^2
\leq \Re\bigl(\wh F_j\cdot\wh Z_j\bigr)\hfill\cr\hfill+\min(r,r^{-1})
\sum_{k=1}^{n-1}
 \Bigl(\Re\bigl(B_\omega A_\omega^{k-1}\wh Z_j\cdotp B_\omega A_\omega^k\wh F_j\bigr)
 + \Re\bigl(B_\omega A_\omega^{k-1}\wh F_j\cdotp B_\omega A_\omega^k\wh Z_j \bigr)\Bigr)\cdotp}$$
In light of Cauchy-Schwarz inequality, the sum in the right-hand side may
be bounded by $\sqrt{L_{r,\omega}(\wh Z_j) L_{r,\omega}(\wh F_j)}.$ 
Hence,   using that Condition (SK) and \eqref{eq:equivL} ensure the existence of
a positive constant $c_0>0$ such that for all $r>0$ and $\omega\in\S^{d-1},$ 
\begin{equation}\label{eq:SKL}
\sum_{k=0}^{n-1}\ep_k|B_\omega A_\omega^k\wh Z_j|^2\geq 2c_0  L_{r,\omega}(\wh Z_j),\end{equation}
we conclude that 
\begin{equation}\label{eq:dtLr}
\frac d{dt} L_{r,\omega}(\wh Z_j)+ c_0\min(1,r^2) L_{r,\omega}(\wh Z_j) \leq 
\sqrt{L_{r,\omega}(\wh Z_j) L_{r,\omega}(\wh F_j)}.\end{equation}
Let us denote  for all $j\in\Z,$ 
$$\cL_j\triangleq \|\wh Z_j\|_{L^2}^2+\sum_{k=1}^{n-1}\eps_k
\Re\int_{\R^d} (BA_\omega^{k-1}\wh Z_j(\xi))\cdot (BA_\omega^k\wh Z_j(\xi))\:\min(|\xi|,|\xi|^{-1})\,d\xi.$$
Integrating \eqref{eq:dtLr}  on $\R^d$ and observing that, by virtue of \eqref{eq:equivL}, we have
\begin{equation}\label{eq:equivLj}
\frac12\|Z_j\|_{L^2}^2\leq \cL_j\leq 2\|Z_j\|_{L^2}^2,\end{equation}
we get, up to a slight modification of $c_0,$ 
\begin{equation}\label{eq:Lj} 
\frac d{dt}\cL_j + c_0\min(1,2^{2j})\cL_j\leq C\|F_j\|_{L^2}\sqrt{\cL_j}. 
\end{equation}
Therefore, taking $X=\sqrt{\cL_j}$ and $A=  C\|F_j\|_{L^2}$ in Lemma \ref{SimpliCarre}, 
then multiplying by $2^{j(\frac d2-1)}$ delivers for all $j<0,$
$$2^{j(\frac d2-1)}\sqrt{\cL_j(t)} + \frac{c_0}2\:2^{j(\frac d2+1)}\int_0^t\sqrt{\cL_j}\,d\tau\leq 
2^{j(\frac d2-1)}\sqrt{\cL_j(0)} + C2^{j(\frac d2-1)}\int_0^t\|F_j\|_{L^2}\,d\tau.
$$
In order to bound the right-hand side, it suffices to combine 
the following  facts that are proved in e.g. \cite[Chap. 2]{BCD}: 
\begin{equation} \label{eq:num}
\hbox{For } d\geq2,\ \hbox{the numerical product maps}\  
\dot B^{\frac d2-1}_{2,1}(\R^d)\times\dot B^{\frac d2}_{2,1}(\R^d)\  \hbox{ to }\ \dot B^{\frac d2-1}_{2,1}(\R^d)\end{equation}
and, for all smooth function $\Phi:\R^d\to\R^p$ vanishing at $0,$
\begin{equation}\label{eq:compo}
\|\Phi(Z)\|_{\dot B^{\frac d2}_{2,1}}\leq C(\|Z\|_{L^\infty}) \|Z\|_{\dot B^{\frac d2}_{2,1}}.
\end{equation}
Hence, remembering also \eqref{eq:smallZ}, we conclude that
$$\|F\|_{\dot B^{\frac d2-1}_{2,1}} \lesssim \|Z\|_{\dot B^{\frac d2-1}_{2,1}}\|\nabla Z\|_{\dot B^{\frac d2}_{2,1}}.$$
So, finally,   there exist two positive constants $c_0$ and  $C$ such 
that for all $j<0,$ we have
\begin{multline}\label{eq:cLlf}
2^{j(\frac d2-1)}\sqrt{\cL_j(t)} + c_02^{j(\frac d2+1)}\int_0^t\sqrt{\cL_j}\,d\tau\leq 
2^{j(\frac d2-1)}\sqrt{\cL_j(0)} \\+ C\int_0^t c_j\|\nabla Z\|_{\dot B^{\frac d2}_{2,1}}\|Z\|_{\dot B^{\frac d2-1}_{2,1}}\,d\tau
\with \sum_{j\in\Z} c_j=1.
\end{multline}

\subsubsection*{$\bullet$ High frequencies}

In order to bound the high frequency part of the solution, we shall keep the functional 
$\cL_j$, but one cannot look  at $F$ defined in \eqref{eq:F}   as a source term 
since this would  entail a loss of one derivative. 
To overcome the difficulty, we mimic the proof of the $L^2$ estimate recalled 
at the beginning of this section, writing the system for $Z_j\triangleq \ddj Z$ as follows:  
\begin{equation}\label{eq:ZZj} 
\d_tZ_j+\sum_{k=1}^d A^k(Z)\d_kZ_j + BZ_j= \sum_{k=1}^d[A^k(Z),\ddj] \partial_kZ.
\end{equation}
Then, taking the $L^2(\R^d;\R^n)$ scalar product with $Z_j$ and integrating by parts yields:
$$\frac 12\frac d{dt}\|Z_j\|_{L^2}^2 +\int_{\R^d}\!BZ_j\cdot Z_j\,dx=\frac12\sum_{k,\ell,m}\int_{\R^d} \d_k(A^k_{\ell m}(Z)) 
Z_j^\ell\, Z_j^m\,dx
+\sum_{k=1}^d\int_{\R^d} [A^k(Z),\ddj] Z\cdot Z_j\,dx.$$
The last sum may be bounded according to the following classical 
commutator estimate (see e.g. \cite[Chap. 2]{BCD}) that is valid for all $s\in(-d/2,d/2+1]$: 
$$\|[A^k(Z),\ddj] \d_kZ\|_{L^2}\leq C c_j 2^{-js} \|\nabla(A^k(Z))\|_{\dot B^{\frac d2}_{2,1}} \|Z\|_{\dot B^s_{2,1}}\with 
\sum_{j\in\Z} c_j=1.$$
Thanks to the embedding \eqref{eq:embed} and to the definition of $\|\cdot\|_{\dot B^s_{2,1}},$
 we  have 
$$
\sum_{k,\ell,m}\int_{\R^d} \d_k(A^k_{\ell m}(Z)) 
Z_j^\ell\, Z_j^m\,dx \leq C c_j 2^{-js} \|\nabla(A^k(Z))\|_{\dot B^{\frac d2}_{2,1}} \|Z\|_{\dot B^s_{2,1}}\|Z_j\|_{L^2}.
$$
Hence,  owing to \eqref{eq:B22},  we have for all $s\in(-d/2,d/2+1],$ 
\begin{equation}\label{eq:Zjjj}
\frac 12\frac d{dt}\|Z_j\|_{L^2}^2 + c\|Z_{2,j}\|_{L^2}^2 \leq Cc_j 2^{-js}\|\nabla Z\|_{\dot B^{\frac d2}_{2,1}}
\|Z\|_{\dot B^s_{2,1}}\|Z_j\|_{L^2}.\end{equation}
To recover the full dissipation, one has to compute for all $r\geq1$ and $\omega\in\S^{d-1},$ 
the time derivative of 
$$r^{-1}\wt L_{r,\omega}(\wh Z_j)\with \wt L_{r,\omega}(\wh Z_j)\triangleq
\sum_{k=1}^{n-1}\eps_k\Re(B A_\omega^{k-1}\wh Z_j\cdot BA_\omega^k)$$
as it will generate the term $\sum_{k=1}^{n-1}\frac{\ep_k}2|B A_\omega^k\wh Z_j|^2,$  
that is, the missing dissipation. 
To proceed, one can keep $F$ defined in \eqref{eq:F} as a source
term and start from   \eqref{eq:ZjLF}. For $j\geq0,$
the term $r^{-1}$ yields the factor $2^{-j}$ that exactly compensates the loss of one derivative
when estimating $F$ in $\dot B^{\frac d2+1}_{2,1}.$ Hence, it suffices to estimate
$F$ in $\dot B^{\frac d2}_{2,1},$ which  may be done by combining \eqref{eq:compo} 
and the following fact:  
\begin{equation} \label{eq:num2}
\hbox{the numerical product maps}\  
\dot B^{\frac d2}_{2,1}(\R^d)\times\dot B^{\frac d2}_{2,1}(\R^d)\  \hbox{ to }\ \dot B^{\frac d2}_{2,1}(\R^d).\end{equation} 
Remembering \eqref{eq:smallZ}, we get
\begin{equation}\label{eq:FF}
\|F\|_{\dot B^{\frac d2}_{2,1}}\lesssim \|Z\|_{\dot B^{\frac d2}_{2,1}}\|\nabla Z\|_{\dot B^{\frac d2}_{2,1}}.\end{equation}
Now,   adding up the relation we get for $r^{-1}\wt L_{r,\omega}(\wh Z_j)$ (after space integration)
to \eqref{eq:Zjjj}  yields for all $j\geq0$:  
\begin{equation}\label{eq:Ljj}
\frac d{dt}\cL_j + \frac12\sum_{k=0}^{n-1}\int_{\R^d} \eps_k|B A^k_\omega Z_j(\xi)|^2\,d\xi
\lesssim \bigl(2^{-j}\|F_j\|_{L^2}
+ c_j 2^{-j(\frac d2+1)}
\|Z\|_{\dot B^{\frac d2+1}_{2,1}}^2\bigr)\|Z_j\|_{L^2},\end{equation}
and \eqref{eq:SKL}  guarantees  that
\begin{equation}\label{eq:Ljjj}
\sum_{k=0}^{n-1}\int_{\R^d} \eps_k|B A^k_\omega Z_j(\xi)|^2\,d\xi\gtrsim \cL_j.\end{equation}
Hence, using Lemma \ref{SimpliCarre}, multiplying by $2^{j(\frac d2+1)}$ and taking advantage of \eqref{eq:F}
and \eqref{eq:FF},  we end up with
\begin{multline}\label{eq:cLhf}
2^{j(\frac d2+1)}\sqrt{\cL_j(t)} + c2^{j(\frac d2+1)}\int_0^t\sqrt{\cL_j}\,d\tau\\\leq 
2^{j(\frac d2+1)}\sqrt{\cL_j(0)} + 
C\int_0^t c_j\|Z\|_{\dot B^{\frac d2+1}_{2,1}}\|Z\|_{\dot B^{\frac d2}_{2,1}\cap \dot B^{\frac d2+1}_{2,1}}\,d\tau.
\end{multline}

\subsubsection*{$\bullet$ Conclusion}
Let us put 
\begin{equation}\label{def:cL}
\cL\triangleq \sum_{j<0} 2^{j(\frac d2-1)}\sqrt{\cL_j} + \sum_{j\geq0} 2^{j(\frac d2+1)}\sqrt{\cL_j}\andf
\cH\triangleq\|Z\|_{\dot B^{\frac d2+1}_{2,1}}.
\end{equation}
Since  $\cL_j\simeq\|Z_j\|_{L^2}^2,$ we have the following equivalence:
\begin{equation}\label{eq:equi}
\cL\simeq \|Z\|_{\wt B^{\frac d2-1,\frac d2+1}_{2,1}}.
\end{equation}
Note that this implies that 
\begin{equation}\label{eq:compare}
\|Z\|_{\dot B^s_{2,1}}\lesssim\cL \quad\hbox{for all }\   \frac d2-1\leq s\leq \frac d2+1.\end{equation}
Hence, we deduce from \eqref{eq:cLlf} and \eqref{eq:cLhf} that
\begin{equation}\label{eq:cL0}\cL(t)+c\int_0^t\cH(\tau)\,d\tau \leq \cL(0) +C\int_0^t\cH(\tau)\cL(\tau)\,d\tau.
\end{equation}
We claim that there exists $\alpha>0$ such that if $\cL(0) < \alpha$ then, for all $t\in[0,T],$ we have
\begin{equation}\label{eq:cL}\cL(t)+\frac c2\int_0^t\cH(\tau)\,d\tau \leq \cL(0).
\end{equation}
Indeed, let us choose $\alpha\in(0,c/(2C))$ so that
$\cL\leq\alpha$ implies that \eqref{eq:smallZ} is satisfied, and set 
$$T_0\triangleq \sup \bigl\{T_1\in[0,T],\: \sup_{t\in[0,T_1]} \cL(t)\leq \alpha\bigr\}\cdotp$$
The above set is nonempty (as $0$ is in it) and contains its supremum since 
$\cL$ is continuous (remember that we assumed that $Z$ is smooth). Hence we have
$$\cL(T_0)+c\int_0^{T_0}\cH(\tau)\,d\tau\leq \cL(0)+ C\int_0^{T_0} \cH(\tau)\cL(\tau)\,d\tau
\leq \cL(0)+ \frac c2\int_0^{T_0} \cH(\tau)\,d\tau.$$
Using the smallness hypothesis on $\cL(0),$ one may conclude that $\cL< \alpha$ on $[0,T_0].$
As $\cL$ is continuous,  we must have $T_0=T$ and \eqref{eq:cL} thus holds on $[0,T].$ 
\smallbreak
Clearly, time $t=0$ does not play any particular role, and one can apply the same argument on any sub-interval
of $[0,T],$ which leads to:
\begin{equation}\label{eq:cLt0}\cL(t)+\frac c2\int_{t_0}^t\cH(\tau)\,d\tau \leq \cL(t_0),\qquad
0\leq t_0\leq t\leq T.
\end{equation}
Hence, provided that  $\|Z_0\|_{\wt B^{\frac d2-1,\frac d2+1}_{2,1}},$   is small enough, $\cL$ is a Lyapunov functional that is, in light of  \eqref{eq:equi}, equivalent  to  $\|Z\|_{\wt B^{\frac d2-1,\frac d2+1}_{2,1}}.$


\subsection{The damped mode}

Define $W$  by the  relation:
$$\d_tZ_2+L_2W=0.$$
Since $L_2$ is invertible,  the second line of \eqref{GE} yields 
\begin{equation}\label{eq:WWW}
W=Z_2+L_2^{-1}\sum_{k=1}^d\bigl(A_{21}^k(Z)\d_kZ_1+A_{22}^k(Z)\d_kZ_2\bigr),
\end{equation}
which allows to get the following equation for $W$: 
\begin{equation}\label{eq:dtW}\d_tW +L_2W = L_2^{-1}\biggl(\sum_{k=1}^d
\d_t(A_{21}^k(Z)\d_kZ_1) +\d_t(A_{22}^k(Z))\d_kZ_2 - A_{22}^k(Z) L_2\d_kW\biggr)\cdotp\end{equation}
Applying $\ddj$ to the above relation and denoting $W_j\triangleq \ddj W$ leads to 
$$\displaylines{\d_tW_j +L_2W_j = L_2^{-1}\biggl(\sum_{k=1}^d
\ddj\d_t((A_{21}^k(Z)-\bar A_{21}^k)\d_kZ_1) + \bar A_{21}^k\d_t\d_kZ_{1,j} \hfill\cr\hfill+\ddj(\d_t(A_{22}^k(Z))\:\d_kZ_2) +\ddj\bigl((\bar A_{22}^k-A_{22}^k(Z))W\bigr)
- \bar A_{22}^k L_2\d_kW_j\biggr)\cdotp}$$
Using \eqref{eq:B22}, an  energy method and Lemma \ref{SimpliCarre}, 
we get two positive constants $c$ and $C$ such that for all $j\in\Z$ and $t\in[0,T],$
\begin{multline}\label{eq:Wjj}
\|W_j(t)\|_{L^2}+c\int_0^t\|W_j\|_{L^2} \leq \|W_j(0)\|_{L^2}\\
+ C\int_0^t\sum_{k=1}^d\Bigl(\|\ddj\d_t((A_{21}^k(Z)-\bar A_{21}^k)\d_kZ_1)\|_{L^2}
+\| \bar A_{21}^k\d_t\d_kZ_{1,j}\|_{L^2}\\
+ \|\ddj(\d_t(A_{22}^k(Z)\:\d_kZ_2)\|_{L^2}+ \|\ddj\bigl((\bar A_{22}^k-A_{22}^k(Z))W\bigr)\|_{L^2}
+\|L_2^{-1}\bar A_{22}^k L_2\d_kW_j\|_{L^2}\Bigr)\,d\tau.
\end{multline}
 Bernstein inequality \eqref{eq:bernstein2}  guarantees that 
$$
\|L_2^{-1}\bar A_{22}^k L_2\d_kW_j\|_{L^2}\leq C2^j\|W_j\|_{L^2}.$$
Hence, there exists $j_0\in\Z$ such that for all  $j\leq j_0,$ the last term may be absorbed by the 
time integral of the left-hand side. 
\smallbreak
Next,  using \eqref{GE} to compute the time derivatives, we see that the terms with 
$$\d_t((A^k_{21}(Z)-\bar A^k_{21})\d_kZ_1)\quad\hbox{or}\quad \d_t(A_{22}^k(Z))\,\d_kZ_2$$  are linear combinations
of coefficients  of type $K(Z)\,Z\otimes\nabla^2Z,$  $K(Z)\,Z\otimes\nabla Z$ 
and $K(Z)\,\nabla Z\otimes\nabla Z$ for suitable smooth functions $K.$ 
Hence, using  \eqref{eq:num}, \eqref{eq:compo} and remembering \eqref{eq:smallZ} yields 
$$\|\d_t((A_{21}^k(Z)-\bar A_{21}^k)\d_kZ_1\|_{\dot B^{\frac d2-1}_{2,1}} 
+ \|\d_t(A_{22}^k(Z))\:\d_kZ_2\|_{\dot B^{\frac d2-1}_{2,1}}
\leq C  \bigl( \|Z\|_{\dot B^{\frac d2}_{2,1}}\|\nabla Z\|_{\dot B^{\frac d2}_{2,1}}
+ \|Z\|_{\dot B^{\frac d2}_{2,1}}^2\bigr)\cdotp$$
To handle  $\|\ddj\bigl((\bar A_{22}^k-A_{22}^k(Z))W\bigr)\|_{L^2},$ we split $W$ into low and high frequencies. 
For the low frequency part, we just write that by composition \eqref{eq:compo} and product law \eqref{eq:num}, 
$$\|\ddj\bigl((\bar A_{22}^k-A_{22}^k(Z))W^\ell\bigr)\|_{L^2}\leq  Cc_j 2^{-j(\frac d2-1)} \|Z\|_{\dot B^{\frac d2}_{2,1}}
\|W\|_{\dot B^{\frac d2-1}_{2,1}}^\ell.$$
For the high frequency part, we further decompose $W$ as follows (in light of \eqref{eq:WWW}):
\begin{multline}\label{eq:WWWW}
W=Z_2+L_2^{-1}\sum_{k=1}^d\bigl(\bar A_{21}^k\d_kZ_1+\bar A_{22}^k\d_kZ_2\bigr)
\\+L_2^{-1}\sum_{k=1}^d\bigl((A_{21}^k(Z)-\bar A_{21}^k)\d_kZ_1+(A_{22}^k(Z)-\bar A_{22}^k)\d_kZ_2\bigr),
\end{multline}
which allows to get
\begin{equation}\label{eq:Wh} 
\|W\|^h_{\dot B^{\frac d2}_{2,1}} \leq \|Z_2\|^h_{\dot B^{\frac d2}_{2,1}}+C\|\nabla Z\|^h_{\dot B^{\frac d2}_{2,1}}
+C\|Z\|_{\dot B^{\frac d2}_{2,1}}\|\nabla Z\|_{\dot B^{\frac d2}_{2,1}},\end{equation}
whence, using \eqref{eq:num2},  
$$\begin{aligned}
\|(\bar A_{22}^k-A_{22}^k(Z))W^h\|_{\dot B^{\frac d2}_{2,1}}&\lesssim \|Z\|_{\dot B^{\frac d2}_{2,1}}
\|W^h\|_{\dot B^{\frac d2}_{2,1}}\\&\lesssim
 \|Z\|_{\dot B^{\frac d2}_{2,1}}\bigl( \|Z_2\|^h_{\dot B^{\frac d2}_{2,1}}+\|\nabla Z\|^h_{\dot B^{\frac d2}_{2,1}}
+\|Z\|_{\dot B^{\frac d2}_{2,1}}\|\nabla Z\|_{\dot B^{\frac d2}_{2,1}}\bigr),\end{aligned}$$
and thus
$$\|\ddj\bigl((\bar A_{22}^k-A_{22}^k(Z))W^h\bigr)\|_{L^2}\lesssim c_j 2^{-j\frac d2}
\|Z\|_{\dot B^{\frac d2}_{2,1}} \bigl(1+\|Z\|_{\dot B^{\frac d2}_{2,1}}\bigr)\|\nabla Z\|_{\dot B^{\frac d2}_{2,1}}\with 
\sum_{j\in\Z} c_j=1.
$$
Plugging this information in \eqref{eq:Wjj}, multiplying by $2^{j(\frac d2-1)},$ 
summing up on $j\leq j_0$ and remembering that $\|Z\|_{\dot B^{\frac d2}_{2,1}}$ is small,   
we conclude that\footnote{Handling the intermediate frequencies
corresponding to $j_0\leq j<0$ may be done from \eqref{eq:cLlf} since, then, 
$2^{j(\frac d2+1)}\simeq 2^{j(\frac d2-1)}$ and $\|W_j\|_{L^2}\lesssim \|Z_j\|_{L^2}.$}
$$\displaylines{
\|W(t)\|_{\dot B^{\frac d2-1}_{2,1}}^\ell + \frac c2\int_0^t \|W\|_{\dot B^{\frac d2-1}_{2,1}}^\ell \,d\tau
\leq \|W_0\|_{\dot B^{\frac d2-1}_{2,1}}^\ell \hfill\cr\hfill+C \int_0^t \bigl(\|Z\|_{\dot B^{\frac d2}_{2,1}}\|\nabla Z\|_{\dot B^{\frac d2}_{2,1}}+\|Z\|_{\dot B^{\frac d2}_{2,1}}^2\bigr)\,d\tau
+C\int_0^t\|Z\|_{\dot B^{\frac d2+1}_{2,1}}^\ell\,d\tau.}$$
Since $$\|Z\|_{\dot B^{\frac d2}_{2,1}}\lesssim  \|Z\|_{\dot B^{\frac d2-1}_{2,1}}^\ell +  \|Z\|_{\dot B^{\frac d2+1}_{2,1}}^h$$ 
and 
$$\|Z\|_{\dot B^{\frac d2}_{2,1}}^2\lesssim  \|Z\|_{\dot B^{\frac d2-1}_{2,1}} \|Z\|_{\dot B^{\frac d2+1}_{2,1}},$$
taking advantage of \eqref{eq:cL} eventually yields :
\begin{equation}\label{eq:West}
\|W(t)\|_{\dot B^{\frac d2-1}_{2,1}}^\ell + c\int_0^t \|W\|_{\dot B^{\frac d2-1}_{2,1}}^\ell \,d\tau
\leq C\cL(0)\quad\hbox{for all } \ t\in[0,T].\end{equation}
Owing to \eqref{eq:Wh} and \eqref{eq:cL}, the high frequencies of $W$  also satisfy 
\begin{equation}\label{eq:WHF}
\|W(t)\|_{\dot B^{\frac d2-1}_{2,1}}^h + c\int_0^t \|W\|_{\dot B^{\frac d2-1}_{2,1}}^h \,d\tau
\leq C\cL(0)\quad\hbox{for all } \ t\in[0,T],\end{equation}
which  completes the proof of  \eqref{eq:X0}. 

\subsection{Proving Theorem \ref{ThmGlobal}}

Having the a priori estimates \eqref{eq:cL}, \eqref{eq:West} and \eqref{eq:WHF} at hand, 
constructing a global solution obeying Inequality  \eqref{eq:X0} for any data $Z_0$ satisfying 
\eqref{eq:smalldata} follows from rather standard arguments. 
First, in  order to benefit from the classical theory on first order hyperbolic systems, 
we remove the low frequency part of $Z_{0}$ so as to have an 
initial data in the  \emph{nonhomogeneous}  Besov space $B^{\frac d2+1}_{2,1}.$
More precisely, we set  for all $n\in\N,$ 
\begin{equation}\label{def:Zn}Z_0^n\triangleq (\Id-\dot S_n) Z_0\with \dot S_n\triangleq \chi(2^{-n}D).\end{equation}
In light of e.g. \cite[Chap. 4]{BCD}, we 
 get a unique maximal solution $Z^n$ in $$\cC([0,T^n); B^{\frac d2+1}_{2,1})
\cap \cC^1([0,T^n); B^{\frac d2}_{2,1}).$$
Since $B^{\frac d2}_{2,1}$ is embedded in $\dot B^{\frac d2}_{2,1},$ it is easy to prove from 
\eqref{GE} and the composition and product laws \eqref{eq:num} and \eqref{eq:compo} that
$\d_t Z_1$  and $(\d_t Z_2-L_2Z_2)$ are in $L^\infty_{loc}(0,T^n;\dot B^{\frac d2-1}_{2,1}),$
and as  $Z_{0}^n$ belongs to $\dot B^{\frac d2-1}_{2,1},$ we deduce that 
$Z^n$ is actually in $\cC([0,T^n); B^{\frac d2+1}_{2,1}\cap B^{\frac d2-1}_{2,1}),$ hence obeys 
\eqref{eq:cL} for all $t\in[0,T^n).$ 
In particular, the embedding $\dot B^{\frac d2}_{2,1}\hookrightarrow L^\infty$ guarantees that 
$$
\int_0^{T^n} \|\nabla Z^n\|_{L^\infty}\,dt <\infty,
$$
and thus the standard continuation criterion for first order hyperbolic symmetric systems
(again, refer to  e.g. \cite[Chap. 4]{BCD}) ensures that $T^n=\infty.$ In other words,  for
all $n\in\N,$ the function $Z^n$ is a global solution of \eqref{GE} that satisfies 
 \eqref{eq:cL}, \eqref{eq:West} and \eqref{eq:WHF}  for all $t\in\R_+.$ 
 Note that, owing to the definition \eqref{def:Zn}, we have
 $$
 \|Z_0^n\|_{\wt B^{\frac d2,\frac d2+1}_{2,1}}\leq   \|Z_0\|_{\wt B^{\frac d2,\frac d2+1}_{2,1}},\qquad n\in\N.$$
 Hence $(Z^n)_{n\in\N}$ is a sequence of global smooth solutions that is 
 bounded in the space $E$ of Theorem \ref{ThmGlobal}. 
\smallbreak
Proving the convergence of $(Z^n)_{n\in\N}$ relies on the following proposition 
that can be easily proved by writing out the system satisfied by the difference of two 
solutions $Z$ and $Z'$ of \eqref{GE},  namely, 
$$\d_t(Z-Z')+\sum_{k=1}^d A^k(Z)\cdotp \d_k(Z-Z') + B(Z-Z')= \sum_{k=1}^d (A^k(Z')-A^k(Z))\d_k Z', $$
applying the Littlewood-Paley truncation operator
$\ddj$ to this system then arguing as for getting \eqref{eq:Zjjj} and using product
laws (see the details in \cite[Prop. 2]{CBD2}):
\begin{prop}\label{p:stability}
 Consider two  solutions $Z$ and $Z'$ of \eqref{GE} in the space $E$ corresponding
to small enough initial data $Z_0$ and $Z'_0$ in $\wt B^{\frac d2-1,\frac d2+1}_{2,1}.$  Then we have 
for all $t\geq0,$
$$\|(Z-Z')(t)\|_{\dot B^{\frac d2}_{2,1}}\leq \|Z_0-Z'_0\|_{\dot B^{\frac d2}_{2,1}}
+C\int_0^t\bigl(\|Z\|_{\wt B^{\frac d2,\frac d2+1}_{2,1}}+\|Z'\|_{\wt B^{\frac d2,\frac d2+1}_{2,1}}\bigr)d\tau.$$
\end{prop}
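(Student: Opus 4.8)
The plan is to write down the evolution equation for the difference $\delta\!Z\triangleq Z-Z'$, which is exactly the system displayed just before the statement, namely
\begin{equation*}
\d_t(\delta\!Z)+\sum_{k=1}^d A^k(Z)\cdotp\d_k(\delta\!Z)+B\,\delta\!Z=\sum_{k=1}^d\bigl(A^k(Z')-A^k(Z)\bigr)\d_kZ'\triangleq G,
\end{equation*}
and to run on it a Littlewood--Paley localized $L^2$ energy estimate, mimicking the derivation of \eqref{eq:Zjjj}. First I would apply $\ddj$, which produces the commutator source $\sum_{k}[A^k(Z),\ddj]\d_k(\delta\!Z)$ in addition to $\ddj G$, then take the $L^2(\R^d;\R^n)$ scalar product with $\delta\!Z_j\triangleq\ddj(\delta\!Z)$, integrate by parts in the first-order terms using the symmetry of the $A^k(Z)$, and simply drop the nonnegative term $\int_{\R^d}B\,\delta\!Z_j\cdot\delta\!Z_j\,dx\geq0$ (for a \emph{stability} estimate one does not need the dissipation, only $B\geq0$). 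This yields, for every $j\in\Z$,
\begin{equation*}
\frac12\frac d{dt}\|\delta\!Z_j\|_{L^2}^2\leq\|\delta\!Z_j\|_{L^2}\Bigl(\|\ddj G\|_{L^2}+\sum_{k=1}^d\|[A^k(Z),\ddj]\d_k(\delta\!Z)\|_{L^2}+\tfrac12\|\nabla(A^k(Z))\|_{L^\infty}\|\delta\!Z_j\|_{L^2}\Bigr)\cdotp
\end{equation*}

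The second step is to bound the right-hand side at regularity $d/2$. The commutator is handled by the commutator inequality recalled above (valid for $s\in(-d/2,d/2+1]$, used here with $s=d/2$), giving $\|[A^k(Z),\ddj]\d_k(\delta\!Z)\|_{L^2}\lesssim c_j2^{-jd/2}\|\nabla(A^k(Z))\|_{\dot B^{d/2}_{2,1}}\|\delta\!Z\|_{\dot B^{d/2}_{2,1}}$ with $\sum_jc_j=1$, while $\|\nabla(A^k(Z))\|_{L^\infty}$ is controlled by the embedding \eqref{eq:embed}. For the source $G$, I would write $A^k(Z)-A^k(Z')=-\bigl(\int_0^1 DA^k\bigl(Z'+\theta\,\delta\!Z\bigr)\,d\theta\bigr)\delta\!Z$ and combine the composition estimate \eqref{eq:compo}, the algebra property \eqref{eq:num2} of $\dot B^{d/2}_{2,1}$ and the (uniform in time) smallness of $Z$ and $Z'$ in $\dot B^{d/2}_{2,1}$ — hence in $L^\infty$ — inherited from membership in $E$, to get $\|G\|_{\dot B^{d/2}_{2,1}}\lesssim\|\delta\!Z\|_{\dot B^{d/2}_{2,1}}\|\nabla Z'\|_{\dot B^{d/2}_{2,1}}$. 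In all these bounds the data $Z$, $Z'$ enter only through $\|\nabla(A^k(Z))\|_{\dot B^{d/2}_{2,1}}\lesssim\|Z\|_{\dot B^{d/2+1}_{2,1}}$ and $\|\nabla Z'\|_{\dot B^{d/2}_{2,1}}\simeq\|Z'\|_{\dot B^{d/2+1}_{2,1}}$, and both are $\lesssim\|Z\|_{\wt B^{d/2,d/2+1}_{2,1}}$, resp.\ $\lesssim\|Z'\|_{\wt B^{d/2,d/2+1}_{2,1}}$, since $2^{j(d/2+1)}\leq 2^{jd/2}$ for $j<0$.

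The third step is routine: integrate the block inequality in time via Lemma \ref{SimpliCarre} (with $X=\|\delta\!Z_j\|_{L^2}$), multiply by $2^{jd/2}$, sum over $j\in\Z$ using $\sum_jc_j=1$, and absorb the $\|\nabla(A^k(Z))\|_{L^\infty}\|\delta\!Z_j\|_{L^2}$ contributions into the same right-hand side. This leads to
\begin{equation*}
\|\delta\!Z(t)\|_{\dot B^{d/2}_{2,1}}\leq\|\delta\!Z(0)\|_{\dot B^{d/2}_{2,1}}+C\int_0^t\bigl(\|Z\|_{\wt B^{d/2,d/2+1}_{2,1}}+\|Z'\|_{\wt B^{d/2,d/2+1}_{2,1}}\bigr)\|\delta\!Z(\tau)\|_{\dot B^{d/2}_{2,1}}\,d\tau,
\end{equation*}
which is the asserted estimate (Gr\"onwall's lemma then upgrades it to an exponential bound wherever the convergence argument needs it). The one genuinely delicate point is the apparent loss of one derivative: $G$ is only controlled in $\dot B^{d/2}_{2,1}$ at the cost of a $\dot B^{d/2+1}_{2,1}$ norm of $Z'$, which is why the stability estimate can be closed at the lower regularity $d/2$ but not at the endpoint $d/2+1$; this forces the sequence $(Z^n)_n$ to be shown convergent first in $L^\infty_{loc}(\R_+;\dot B^{d/2}_{2,1})$, the limit being then recovered in the full space $E$ by interpolation and a Fatou-type argument.
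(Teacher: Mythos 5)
Your proof is correct and follows exactly the route the paper sketches for this proposition (write the difference equation, localize with $\ddj$, run the $L^2$ energy/commutator estimate as for \eqref{eq:Zjjj}, drop the nonnegative $B$ term, and close with the product/composition laws and Lemma \ref{SimpliCarre}); the paper gives only this one-line sketch and defers the details to \cite[Prop. 2]{CBD2}. Note that the inequality you actually derive carries the factor $\|(Z-Z')(\tau)\|_{\dot B^{d/2}_{2,1}}$ inside the time integral, which is the form needed for the subsequent Gronwall/Cauchy-sequence argument; the statement as printed omits this factor, apparently by a typographical slip, and your version is the intended one.
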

From this proposition (applied to $Z^n$ and $Z^m$ for any $(n,m)\in\N^2$), Gronwall lemma
and  the definition of the initial data in \eqref{def:Zn}, we  gather that $(Z^n)_{n\in\N}$
is a Cauchy sequence in the space $\cC_b(\R_+;\dot B^{\frac d2}_{2,1}),$ 
hence converges to some function $Z$ in $\cC_b(\R_+;\dot B^{\frac d2}_{2,1}).$ 
As the regularity  is high, passing to the limit in the system is not an issue, and one can easily conclude
that $Z$  satisfies \eqref{GE} supplemented with data $Z_0.$

That $Z$  belongs to the smaller space $E$  stems from standard functional analysis. Typically, one uses that all the Besov spaces under consideration satisfy the Fatou 
property, that is, for instance
$$\|Z\|_{\wt B^{\frac d2-1,\frac d2+1}_{2,1}}\leq C\liminf \|Z^n\|_{\wt B^{\frac d2-1,\frac d2+1}_{2,1}}.$$
  The only property that is missing is the time continuity 
with range  in $\dot B^{\frac d2+1}_{2,1}.$ However, this   is known to be true
for general quasilinear symmetric systems (see e.g.  \cite[Chap. 4]{BCD}). 
\medbreak
Finally, the uniqueness follows from  Proposition \ref{p:stability}.


\section{Decay estimates and asymptotic behavior}\label{s:decay}

The global-in-time properties of integrability for the solution $Z$ that have been proved  
so far   ensure that $Z(t)$ tends to $0$ in the tempered distributional meaning 
when $t$ goes to $\infty.$
In the present section, we aim at specifying the decay rate for some Besov norms of $Z,$
whenever the initial data satisfy a (mild) additional  condition. 
In the pioneering works
by the Japanese school in the 70ies and early 80ies  (see e.g. \cite{MN,SK}),  it  was expressed in terms of
Lebesgue spaces $L^p$ for some $p\in[1,2).$  However, it is well understood now that it suffices
to prescribe  this condition in some  homogeneous Besov spaces with a negative regularity index.
\smallbreak
In order to understand how  those  spaces come into play, looking first at the  linearized 
at the linearized system \eqref{eq:Zlinear} with no source term is very informative.
Let $Z$ be  the corresponding solution. Using 
 \eqref{eq:whZ} and Fourier-Plancherel theorem yields for all $t\geq0$: 
\begin{equation}\label{eq:decayZj}
\|Z_j(t)\|_{L^2}\leq C\|Z_j(0)\|_{L^2} e^{-c\min(1,2^{2j})t},\quad j\in\Z.\end{equation}
This means that the high frequencies of $Z$ decay to $0$ exponentially fast, 
and that the low frequencies behave as those  of the heat flow. 
More precisely,   for all $\alpha\geq0,$ we have
$$({2^{2j}t})^{\alpha/2}\|Z_j(t)\|_{L^2}\leq C(2^{2j}t)^{\alpha/2} e^{-c2^{2j}t}\|Z_j(0)\|_{L^2},\qquad
t\geq0,\quad j<0.$$
Hence, since the function $x^{\alpha/2}e^{-x}$ is bounded on $\R_+,$
we eventually get for all $s\in\R,$
\begin{equation}\label{eq:decaylflinear}t^{\alpha/2}\|Z(t)\|_{\dot B^{s+\alpha}_{2,1}}^\ell\leq C_\alpha \|Z_0\|_{\dot B^s_{2,1}}^\ell,\qquad t\geq0.\end{equation}
We note that, as for  the free heat equation,  in order to obtain some decay for the low frequencies, 
 a shift a regularity is needed. This is  the reason why it is wise 
 to make an additional assumption (e.g. some negative regularity) on the initial data to eventually 
 get some decay rate   for the norms we considered before for the  global solutions to \eqref{eq:ZZ}. 
 In fact, to compare our results with the classical ones in the literature, one can  introduce
 another family of homogeneous Besov spaces, namely the sets $\dot B^s_{2,\infty}$ of
 tempered distributions $z$ on $\R^d$ satisfying 
 \begin{equation}\label{eq:Binfty}
\|z\|_{\dot B^{s}_{2,\infty}} \triangleq  \sup_{j\in\Z} 2^{js}\|\ddj z\|_{L^2} <\infty\andf
\lim_{j\to-\infty} \|\chi(2^{-j}D)z\|_{L^\infty} =0.\end{equation}
 Owing to the critical embedding 
$$\|z\|_{\dot B^{\frac d2-\frac dp}_{2,\infty}(\R^d)}\lesssim \|z\|_{L^p(\R^d)},\qquad 1\leq p\leq2,$$
making   assumptions in  spaces $\dot B^s_{2,\infty}$ with a negative $s$ is \emph{weaker}
than in  the pioneering works  on decay estimates \cite{MN} where the initial data were 
assumed to be  in $L^1$ (this corresponds to the endpoint value $\sigma_1=d/2$)
or (see  \cite{SK})   in  $L^p$ for some $p\in[1,2)$  (take $\sigma_1=d/p-d/2$). 
\medbreak
 This motivates the following statement that we shall prove in the rest of the section:
 \begin{thm}\label{Thm:decay}  Let the assumptions of Theorem \ref{ThmGlobal} be in force, and assume in addition 
that $Z_0\in\dot B^{-\sigma_1}_{2,\infty}$   for some $\sigma_1$ in $(1-d/2,d/2].$
Let $\alpha_1\triangleq(\sigma_1+d/2-1)/2$ and $c_0\triangleq (\|Z_0\|_{\dot B^{-\sigma_1}_{2,\infty}}+  \|Z_0\|_{\wt B^{\frac d2-1,\frac d2+1}_{2,1}})^{-1/\alpha_1}.$

Then, the global solution $Z$ constructed in Theorem \ref{ThmGlobal} also belongs
to $L^\infty(\R_+;\dot B^{-\sigma_1}_{2,\infty}),$ and
 there exists a constant $C_0$ that may be computed in terms of $c_0$  
such  that 
$$\displaylines{(1+c_0t)^{\alpha_1}  \|Z(t)\|_{\dot B^{\frac d2-1}_{2,1}}^\ell +  (1+c_0t)^{2\alpha_1}
\bigl(\|Z(t)\|_{\dot B^{\frac d2+1}_{2,1}}^h+  \|\d_tZ_2(t)\|_{\dot B^{\frac d2-1}_{2,1}}^\ell\bigr)
\leq C_0  \|Z_0\|_{\wt B^{\frac d2-1,\frac d2+1}_{2,1}}. }$$
\end{thm}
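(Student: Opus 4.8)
The plan is to first propagate the negative-regularity bound $Z\in L^\infty(\R_+;\dot B^{-\sigma_1}_{2,\infty})$, and then to run a time-weighted bootstrap. Throughout one uses the global information of Theorem \ref{ThmGlobal}, in particular that $\int_0^\infty\|Z\|_{\dot B^{\frac d2+1}_{2,1}}\,d\tau\lesssim\cL(0)\lesssim\|Z_0\|_{\wt B^{\frac d2-1,\frac d2+1}_{2,1}}$ is small. Applying $\ddj$ to \eqref{eq:ZZ}, Duhamel's formula together with \eqref{eq:whZ} gives for all $j\in\Z$
$$\|Z_j(t)\|_{L^2}\lesssim e^{-c\min(1,2^{2j})t}\|Z_{0,j}\|_{L^2}+\int_0^t e^{-c\min(1,2^{2j})(t-\tau)}\|F_j(\tau)\|_{L^2}\,d\tau,$$
with $F$ as in \eqref{eq:F}. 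Multiplying by $2^{-j\sigma_1}$, taking the supremum over $j$, using that the product maps $\dot B^{-\sigma_1}_{2,\infty}\times\dot B^{\frac d2}_{2,1}$ to $\dot B^{-\sigma_1}_{2,\infty}$ for $\sigma_1\in(1-d/2,d/2]$ (so that $\|F\|_{\dot B^{-\sigma_1}_{2,\infty}}\lesssim\|Z\|_{\dot B^{-\sigma_1}_{2,\infty}}\|\nabla Z\|_{\dot B^{\frac d2}_{2,1}}$), and the smallness of $\int_0^\infty\|\nabla Z\|_{\dot B^{\frac d2}_{2,1}}\,d\tau$, a Grönwall argument yields $\sup_{t\ge0}\|Z(t)\|_{\dot B^{-\sigma_1}_{2,\infty}}\lesssim\|Z_0\|_{\dot B^{-\sigma_1}_{2,\infty}}$ (the high-frequency part being anyway controlled by Theorem \ref{ThmGlobal}).

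The heart of the proof is the low-frequency decay, which rests on the elementary bound $\sum_{j<0}2^{ja}e^{-c2^{2j}t}\lesssim(1+t)^{-a/2}$, valid precisely when $a>0$; this is applied with $a=\sigma_1+\frac d2-1=2\alpha_1>0$, which is where the hypothesis $\sigma_1>1-d/2$ enters. In the Duhamel inequality above one writes $\|Z_{0,j}\|_{L^2}\le 2^{j\sigma_1}\|Z_0\|_{\dot B^{-\sigma_1}_{2,\infty}}$ and sums $2^{j(\frac d2-1)}\|Z_j\|_{L^2}$ over $j<0$: the free part is $O((1+t)^{-\alpha_1})$, and the Duhamel term is split as $\int_0^{t/2}+\int_{t/2}^t$. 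On $[0,t/2]$ the kernel supplies the factor $(1+t)^{-\alpha_1}$ against $\|F(\tau)\|^\ell_{\dot B^{-\sigma_1}_{2,\infty}}\lesssim\|Z(\tau)\|_{\dot B^{-\sigma_1}_{2,\infty}}\|\nabla Z(\tau)\|_{\dot B^{\frac d2}_{2,1}}$, whose time integral is small; on $[t/2,t]$ one uses $\|F(\tau)\|^\ell_{\dot B^{\frac d2-1}_{2,1}}\lesssim\|Z(\tau)\|_{\dot B^{\frac d2-1}_{2,1}}\|\nabla Z(\tau)\|_{\dot B^{\frac d2}_{2,1}}$ together with the decay of $\|Z(\tau)\|^\ell_{\dot B^{\frac d2-1}_{2,1}}$ already available on $[t/2,t]$. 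Introducing $\mathcal M(t)\triangleq\sup_{0\le\tau\le t}(1+c_0\tau)^{\alpha_1}\|Z(\tau)\|^\ell_{\dot B^{\frac d2-1}_{2,1}}$, this closes as $\mathcal M(t)\lesssim\|Z_0\|_{\wt B^{\frac d2-1,\frac d2+1}_{2,1}}+(\text{small})\,\mathcal M(t)$, giving the first term of the statement; the constant $c_0$ is normalised so that the weights $(1+c_0\tau)^\beta$ swallow the constants produced by the nonlinear estimates.

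For the faster-decaying quantities one proceeds in the same spirit. The high-frequency norm $\|Z(t)\|^h_{\dot B^{\frac d2+1}_{2,1}}$ is governed by its own exponential decay and by the feedback of $F$ from the low frequencies; repeating the bootstrap with the two-derivative gain — and, exactly as in the existence proof, handling the derivative loss in $F$ through the commutator form \eqref{eq:ZZj} rather than as a genuine source — produces the rate $(1+c_0t)^{-2\alpha_1}$. Finally, $\partial_tZ_2=-L_2W$ with $W$ solving the damped equation \eqref{eq:dtW}; using Duhamel for $\partial_t+L_2$, the representation \eqref{eq:WWW} of $W$, and the decay rates just obtained for $Z$ (noting the extra derivative sitting inside $W$) yields $\|\partial_tZ_2(t)\|^\ell_{\dot B^{\frac d2-1}_{2,1}}=O((1+c_0t)^{-2\alpha_1})$.

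The main obstacle is closing the second step: one must simultaneously obtain the correct algebraic decay of the heat-type kernel summed over dyadic shells (which forces the lower bound on $\sigma_1$), perform the $[0,t/2]\cup[t/2,t]$ splitting so that one factor of the quadratic nonlinearity lives in the bounded space $\dot B^{-\sigma_1}_{2,\infty}$ while the other lives in a space where decay has already been bootstrapped, and absorb every nonlinear contribution using only the small quantity $\int_0^\infty\|Z\|_{\dot B^{\frac d2+1}_{2,1}}\,d\tau$ from Theorem \ref{ThmGlobal}, no further smallness being available. Once this is done, the high-frequency and $\partial_tZ_2$ decay rates follow comparatively routinely from the damped-mode structure.
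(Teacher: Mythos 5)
Your proposal is correct in substance, but the central step is carried out by a genuinely different method than the paper's. For the low-frequency decay, you run the classical Duhamel bootstrap: split the integral at $t/2$, put $F$ in $\dot B^{-\sigma_1}_{2,\infty}$ on $[0,t/2]$ (using the product law with the \emph{undifferentiated} factor $A^k(Z)-\bar A^k$ in the negative space, which is the right way round to avoid derivative loss) and in $\dot B^{\frac d2-1}_{2,1}$ on $[t/2,t]$, then close via the weighted supremum $\mathcal M(t)$. The paper instead never splits the Duhamel integral: it combines the Lyapunov inequality $\frac d{dt}\cL+c\cH\le0$ from \eqref{eq:cLt0} with the interpolation $\|Z\|^\ell_{\dot B^{\frac d2-1}_{2,1}}\lesssim(\|Z\|^\ell_{\dot B^{\frac d2+1}_{2,1}})^{1-\theta_0}(\|Z\|^\ell_{\dot B^{-\sigma_1}_{2,\infty}})^{\theta_0}$ and the uniform bound in $\dot B^{-\sigma_1}_{2,\infty}$ to obtain the closed differential inequality $\frac d{dt}\cL+c_0\cL^{1/(1-\theta_0)}\le0$, which integrates at once to $(1+c_0t)^{-\alpha_1}$ decay of the \emph{whole} hybrid norm — no bootstrap, no case distinction between low and high frequencies at this stage. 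The paper explicitly acknowledges your route as viable (it is the one of \cite{DX1}) but "more technical and less elegant"; what the Nash/Guo--Wang argument buys is precisely that it bypasses the splitting and the absorption of the quadratic feedback. The propagation of negative regularity is done in the paper by an energy/commutator estimate rather than Duhamel, but both work. One point you should make explicit: your $[t/2,t]$ estimate uses the decay of the \emph{full} norm $\|Z(\tau)\|_{\dot B^{\frac d2-1}_{2,1}}$, whose high-frequency part decays only through the high-frequency bootstrap (which in turn feeds on the low-frequency decay), so the low- and high-frequency weighted suprema must be placed in a single functional $\mathcal M$ and closed simultaneously — as written, your low-frequency step does not close on its own. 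The high-frequency and damped-mode steps then match the paper's.
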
 
\begin{remark}  Under the  (stronger) structure assumptions of Theorem \ref{Thmd2}, one can prove a similar result
assuming only that $\sigma_1$ is in wider range $(-d/2,d/2].$ The inequality we eventually get is 
$$(1+c_0t)^{\alpha_1}  \|Z(t)\|_{\dot B^{\frac d2}_{2,1}}^\ell +  (1+c_0t)^{2\alpha_1}
\|Z(t)\|_{\dot B^{\frac d2+1}_{2,1}}^h+  (1+c_0t)^{2\alpha_1} \|\d_tZ_2(t)\|_{\dot B^{\frac d2}_{2,1}}^\ell
\leq C_0  \|Z_0\|_{\wt B^{\frac d2,\frac d2+1}_{2,1}}$$
with $\alpha_1\triangleq (\sigma_1+d/2)/2$ and  $c_0\triangleq (\|Z_0\|_{\dot B^{-\sigma_1}_{2,\infty}}+  \|Z_0\|_{\wt B^{\frac d2,\frac d2+1}_{2,1}})^{-1/\alpha_1}.$
\end{remark}
\begin{remark}
Even though the  negative Besov space assumption 
is weaker than in e.g. \cite{SK}, the obtained decay rates  are the same ones. 
Note also that    $\|Z_0\|_{\dot B^{-\sigma_1}_{2,\infty}}$ 
can be arbitrarily large: only $\|Z_0\|_{\wt B^{\frac d2-1,\frac d2+1}_{2,1}}$ has to be small.  
\smallbreak
The linear decay rate for low frequencies turns out to be  the correct one for the  solution
of the nonlinear system \eqref{eq:ZZ}, and 
 better (algebraic) decay rates  hold true for the high frequencies and for the damped mode. 
At the same time, although  the high frequencies of the solution of the linearized system \eqref{eq:Zlinear} have exponential decay, 
it is not the case for the nonlinear system \eqref{eq:ZZ}
owing to the coupling between the low and high frequencies through the nonlinear terms.
We do not claim optimality of  the above  decay rates  for the high
frequencies but, for sure, it is very unlikely  that they are exponential even for very particular initial data. 
\end{remark}
Let us briefly explain the general strategy  of the proof. 
The starting point  is to  show that the additional  
negative regularity is propagated for all time (with a time-independent control).
Then, we shall  combine it with  Inequality \eqref{eq:cLt0} and   an interpolation 
argument so as to exhibit a decay inequality for $\|Z(t)\|_{\dot B^{\frac d2-1}_{2,1}}.$
The rate that we shall get  in this way turns out to  be precisely the one  that was  expected from our linear analysis in \eqref{eq:decaylflinear}. 
Then, interpolating with the estimate in the negative space will enable us to capture optimal decay 
rates for  intermediate norms $\|Z(t)\|_{\dot B^{s}_{2,1}}.$ 

To the best of our knowledge the idea of combining a Lyapunov inequality with dissipation
and interpolation to get  (optimal) decay rates originates from  the work by J. Nash on parabolic 
equations in \cite{Nash}\footnote{Special thanks to L.-M. Rodrigues for pointing
out this reference to us.}.  Implementing  it  on  other equations
in a functional framework close to ours is rather recent. 
The overall strategy  is well explained in a  work by Y. Guo and Y. Wang \cite{GY}  devoted
to the Boltzmann equation and the compressible Navier-Stokes equations
in the  Sobolev spaces  setting, 
and  Z. Xin and J. Xu  in \cite{XX}  used the same method  to prove
decay estimates for the compressible Navier-Stokes equations in the critical regularity framework.
In the context of partially dissipative systems, the idea of prescribing additional integrability in terms of  negative Besov norms instead of Lebesgue ones seems to originate from  a paper by J. Xu and S. Kawashima \cite{XK3}. 

Finally, let us emphasize that it is possible to do without a Lyapunov functional (like we did in e.g. \cite{DX1}) but, somehow,  the proof is
more technical and less `elegant'.

\subsection{Propagation of negative regularity}

In order to prove that the  regularity in $\dot B^{-\sigma_1}_{2,\infty}$ is propagated for all time, 
 let  us start from the equation of $Z_j$ written in the following way:
$$\d_tZ_j+\sum_{k=1}^dA_k(Z)\d_kZ_j+BZ_j=\sum_{k=1}^d[A_k(Z),\ddj]\d_kZ.$$
Taking the $L^2$ scalar product with $Z_j$ and using  \eqref{eq:B22}  yields
\begin{equation}\label{eq:Zjs}
\frac12\frac {d}{dt}\|Z_j\|_{L^2}^2+c\|Z_{2,j}\|_{L^2}^2\leq \sum_{k=1}^d\|[A_k,\ddj]Z\|_{L^2}\|Z_j\|_{L^2}.\end{equation}
One can show (combine the commutator inequalities of  \cite[Chap. 2]{BCD} with \eqref{eq:compo}) that
$$\sup_{j\in\Z} 2^{-j\sigma_1}\|[A_k(Z),\ddj]\d_kZ\|_{L^2}\leq C\|\nabla Z\|_{\dot B^{\frac d2}_{2,1}}\|Z\|_{\dot B^{-\sigma_1}_{2,\infty}}
\quad\hbox{if }\ -\frac d2\leq -\sigma_1<\frac d2+1.$$
Hence, dropping the nonnegative term in the left-hand side of \eqref{eq:Zjs},  using Lemma \ref{SimpliCarre} and taking the supremum on $j$ yields 
$$\|Z(t)\|_{\dot B^{-\sigma_1}_{2,\infty}} \leq \|Z_0\|_{\dot B^{-\sigma_1}_{2,\infty}}+C\int_0^t\|\nabla Z\|_{\dot B^{\frac d2}_{2,1}}
\|Z\|_{\dot B^{-\sigma_1}_{2,\infty}}\,d\tau,\qquad t\geq0,
$$
which, after applying Gronwall lemma, leads to 
$$\|Z(t)\|_{\dot B^{-\sigma_1}_{2,\infty}} \leq \|Z_0\|_{\dot B^{-\sigma_1}_{2,\infty}}\exp\Bigl(C\int_0^t\|\nabla Z\|_{\dot B^{\frac d2}_{2,1}}\,d\tau\Bigr)\cdotp$$
Whenever $Z_0$ satisfies \eqref{eq:smalldata},  the global solution of Theorem \ref{ThmGlobal} has (small) 
gradient in $L^1(\R_+;\dot B^{\frac d2}_{2,1}).$
Hence the above inequality guarantees that $Z$ is uniformly bounded in $\dot  B^{-\sigma_1}_{2,\infty}$: 
there exists a constant $C$ depending only on $\sigma_1$ and such that
\begin{equation}\label{eq:Zsigma1}
\sup_{t\geq0}\|Z(t)\|_{\dot B^{-\sigma_1}_{2,\infty}} \leq C\|Z_0\|_{\dot B^{-\sigma_1}_{2,\infty}}.
\end{equation}


\subsection{Decay estimates for the whole solution}

The starting point is Inequality  \eqref{eq:cLt0} that is valid for all $0\leq t_0\leq t,$
and the fact that 
$$
 \cL\simeq\sum_{j\in\Z}2^{j(\frac d2-1)+\max(1,2^{2j})}\|Z_j\|_{L^2}
 \andf \cH=\|Z\|_{\dot B^{\frac d2+1}_{2,1}}.
 $$
 Being monotonous,  the function  $\cL$ is almost everywhere differentiable on $\R_+$ and 
 Inequality \eqref{eq:cLt0}  thus implies that
\begin{equation}\label{eq:LH}\frac d{dt}\cL +c\cH\leq 0\quad\hbox{a.e. on}\ \R_+.\end{equation}
Now, if $-\sigma_1<d/2-1,$ then one may use the following interpolation inequality:
$$\|Z\|_{\dot B^{\frac d2-1}_{2,1}}^\ell\lesssim
\bigl(\|Z\|^\ell_{\dot B^{\frac d2+1}_{2,1}}\bigr)^{1-\theta_0} 
\bigl(\|Z\|^\ell_{\dot B^{-\sigma_1}_{2,\infty}}\bigr)^{\theta_0}\with 
(1-\theta_0)\Bigl(1+\frac d2\Bigr)-\sigma_1\theta_0=\frac d2-1$$
which implies,  taking advantage of \eqref{eq:Zsigma1}, that
\begin{equation}\label{eq:decay1}
\|Z(t)\|_{\dot B^{\frac d2+1}_{2,1}}^\ell \gtrsim  
\bigl(\|Z(t)\|_{\dot B^{\frac d2-1}_{2,1}}^\ell\bigr)^{\frac1{1-\theta_0}}
\|Z_0\|_{\dot B^{-\sigma_1}_{2,\infty}}^{-\frac{\theta_0}{1-\theta_0}}.
\end{equation}
To handle the high frequencies of $Z,$ we just write that, owing to \eqref{eq:X0}, we have
\begin{equation}\label{eq:decay2}
\|Z(t)\|_{\dot B^{\frac d2+1}_{2,1}}^h \gtrsim  
\bigl(\|Z(t)\|_{\dot B^{\frac d2+1}_{2,1}}^h\bigr)^{\frac1{1-\theta_0}}
\|Z_0\|_{\wt B^{\frac d2-1,\frac d2+1}_{2,1}}^{-\frac{\theta_0}{1-\theta_0}}.
\end{equation}
Putting \eqref{eq:decay1} and \eqref{eq:decay2} together 
and remembering that 
\begin{equation}\label{eq:equiv} \cL\simeq \|Z\|_{\dot B^{\frac d2-1}_{2,1}}^\ell
+\|Z\|_{\dot B^{\frac d2+1}_{2,1}}^h,\end{equation} 
one may thus write that for a small enough $c>0,$ we have
$$\cH\gtrsim c_0 \cL^{\frac1{1-\theta_0}} \with
c_0\triangleq 
c\bigl(\|Z_0\|_{\dot B^{-\sigma_1}_{2,\infty}}+\|Z_0\|_{\wt B^{\frac d2-1,\frac d2+1}_{2,1}}\bigr)^{-\frac{\theta_0}{1-\theta_0}}\cdotp$$
Reverting to \eqref{eq:LH}, one eventually obtains the following differential inequality:
$$\frac d{dt}\cL +c_0\cL^{\frac1{1-\theta_0}}\leq 0,$$
which readily  leads to 
\begin{equation}\label{eq:decaybf}\cL(t)\leq (1+c_0t)^{1-1/\theta_0}\cL(0).\end{equation}
Now, replacing $\theta_0$ with its value, and using \eqref{eq:equiv},
 one can conclude that
\begin{equation}\label{eq:decaylf}  \|Z(t)\|_{\wt B^{\frac d2-1,\frac d2+1}_{2,1}}
\lesssim (1+c_0t)^{-\alpha_1}\|Z_0\|_{\wt B^{\frac d2-1,\frac d2+1}_{2,1}}\with \alpha_1\triangleq\frac12\biggl(\sigma_1+\frac d2-1\biggr)\cdotp\end{equation}
  As regards the low frequencies of
 the solution,  this decay is  consistent with \eqref{eq:decaylflinear} in the case $s=-\sigma_1$ and 
 $\alpha=\sigma_1+d/2-1.$


\subsection{High  frequency decay}
{}From \eqref{eq:Ljj} and \eqref{eq:Ljjj}, we gather 
$$\frac12 \frac d{dt}\cL_j+c\cL_j\lesssim 2^{-j}\|F_j\|_{L^2}\|Z_j\|_{L^2}+
c_j2^{-j(\frac d2+1)} \|Z\|_{\dot B^{\frac d2+1}_{2,1}}^2\sqrt{\cL_j}
\with \sum_{j\geq0} c_j=1.$$
Hence, bounding $F_j$ according to \eqref{eq:FF} yields
$$\frac12 \frac d{dt}\cL_j+c\cL_j\lesssim c_j2^{-j(\frac d2+1)} 
\|Z\|_{\dot B^{\frac d2+1}_{2,1}}\bigl(\|Z\|_{\dot B^{\frac d2}_{2,1}}+\|Z\|_{\dot B^{\frac d2+1}_{2,1}}\bigr)\sqrt{\cL_j}
\with \sum_{j\geq0} c_j=1,$$
whence
$$\frac12\frac d{dt} (e^{ct}\sqrt{\cL_j})^2\lesssim c_j2^{-j(\frac d2+1)} 
\|Z\|_{\dot B^{\frac d2+1}_{2,1}}\|Z\|_{\wt B^{\frac d2,\frac d2+1}_{2,1}}
(e^{ct}\sqrt{\cL_j}).$$
By time integration (viz. we use Lemma \ref{SimpliCarre}),  we deduce that  
$$
\sqrt{\cL_j(t)} \leq e^{-ct}\sqrt{\cL_j(0)}+ C2^{-j(\frac d2+1)}\int_0^te^{-c(t-\tau)} c_j(\tau)
\|Z(\tau)\|_{\dot B^{\frac d2+1}_{2,1}}\|Z(\tau)\|_{\wt B^{\frac d2,\frac d2+1}_{2,1}}\,d\tau.$$
Hence, multiplying both sides by $2^{j(\frac d2+1)},$ then summing up on $j\geq0$ and using
the equivalence of the high frequency part of \eqref{def:cL} with the norm in $\dot B^{\frac d2+1}_{2,1},$ we end up with
$$\|Z(t)\|_{\dot B^{\frac d2+1}_{2,1}}^h\lesssim e^{-ct} \|Z_0\|_{\dot B^{\frac d2+1}_{2,1}}^h
+ \int_0^te^{-c(t-\tau)}\|Z(\tau)\|_{\dot B^{\frac d2+1}_{2,1}}\|Z(\tau)\|_{\wt B^{\frac d2,\frac d2+1}_{2,1}}\,d\tau.$$
Consequently, for all $t\geq0,$
$$\displaylines{
(1+c_0t)^{2\alpha_1}\|Z(t)\|_{\dot B^{\frac d2+1}_{2,1}}^h\lesssim (1+c_0t)^{2\alpha_1} e^{-ct} \|Z_0\|_{\dot B^{\frac d2+1}_{2,1}}^h
\hfill\cr\hfill+ \int_0^t \biggl(\frac{1+c_0t}{1+c_0\tau}\biggr)^{2\alpha_1}e^{-c(t-\tau)}\:
\bigl((1+c_0\tau)^{\alpha_1} \|Z(\tau)\|_{\wt B^{\frac d2,\frac d2+1}_{2,1}}\bigr)^2\,d\tau.}$$
Inequality \eqref{eq:decaybf}  ensures  that 
$$
\sup_{\tau\geq0} (1+c_0\tau)^{\alpha_1} \|Z(\tau)\|_{\wt B^{\frac d2,\frac d2+1}_{2,1}}
\leq C  \|Z_0\|_{\wt B^{\frac d2-1,\frac d2+1}_{2,1}}.$$
Furthermore,  one can find a constant $C_0$ depending only on $\alpha_1$ and $c_0$ such that
$$ \int_0^t\biggl(\frac{1+c_0t}{1+c_0\tau}\biggr)^{2\alpha_1}e^{-c(t-\tau)}\,d\tau\leq C_0,\qquad t \geq0.$$
Hence, in the end, we get
\begin{equation}\label{eq:decayhf}
\sup_{t\geq0} (1+c_0t)^{2\alpha_1} \|Z(t)\|_{\dot B^{\frac d2+1}_{2,1}}^h\leq C_0
 \|Z_0\|_{\wt B^{\frac d2-1,\frac d2+1}_{2,1}}.\end{equation}


\subsection{The decay of the damped mode} 

According to \eqref{eq:dtW} and to \eqref{GE},   the damped mode $W\triangleq -L_2^{-1}\d_tZ_2$
 satisfies a relation of the form
$$\d_tW+ L_2W\simeq Z\cdot\nabla^2Z +\nabla Z\cdot\nabla Z +(1+Z)\nabla W$$
and, according to \eqref{eq:WWW}, we have 
$$W\simeq Z_2+\nabla Z + Z\nabla Z.$$
Therefore, applying $\ddj$ to the above equation, taking the $L^2$ 
scalar product with $W_j\triangleq \ddj W$ and using Bernstein inequality in order to bound the last term, we get for all $j\in\Z,$
$$\displaylines{\frac12\frac d{dt}\|W_j\|_{L^2}^2 + c\|W_j\|_{L^2}^2\leq 
C\bigl(\|\ddj((1+Z)Z\cdot\nabla^2Z)\|_{L^2} +\|\ddj((1+Z)\nabla Z\cdot\nabla Z)\|_{L^2}\hfill\cr\hfill
+\|\ddj(\nabla Z_2\cdot Z)\|_{L^2}\bigr)\|W_j\|_{L^2}
+C2^j\|W_j\|_{L^2}^2.}
$$
Let us choose $j_0\in\Z$ such that $C2^{j_0}\leq c/2$ (so that the last term may be absorbed by the left-hand side).
Then,  using Lemma \ref{SimpliCarre},  multiplying both sides by $2^{j(\frac d2-1)},$ then summing up on $j\leq j_0,$
we end up with\footnote{Rigorously speaking the low frequencies 
that are here considered are lower than with our previous definition since it may happen 
that $j_0\leq 0.$ However, one may check that the high frequency decay estimate in \eqref{eq:decayhf}
still holds if we put the threshold at some $j_0\leq 0$: the argument 
we used  works if summing up on $j\geq j_0$ provided we change the `constants' accordingly.}
\begin{multline}\label{eq:Wdecay}\|W(t)\|_{\dot B^{\frac d2-1}_{2,1}}^\ell \leq e^{-ct}\|W_0\|_{\dot B^{\frac d2-1}_{2,1}}^\ell
\\+C\int_0^t e^{-c(t-\tau)}\bigl(\|(1+Z)Z\otimes\nabla^2Z\|_{\dot B^{\frac d2-1}_{2,1}}^\ell
+\|(1+Z)\nabla Z\otimes \nabla Z\|_{\dot B^{\frac d2-1}_{2,1}}^\ell+\|Z\otimes\nabla Z_2\|_{\dot B^{\frac d2-1}_{2,1}}^\ell\bigr)d\tau.\end{multline}
Since  $d\geq2,$ the product laws \eqref{eq:num} and \eqref{eq:num2} guarantee that 
$$\|(1+Z)Z\otimes\nabla^2Z\|_{\dot B^{\frac d2-1}_{2,1}}
+\|(1+Z)\nabla Z\otimes \nabla Z\|_{\dot B^{\frac d2-1}_{2,1}}\lesssim
\bigl(1+ \|Z\|_{\dot B^{\frac d2}_{2,1}}\bigr)  \|Z\|_{\dot B^{\frac d2}_{2,1}}
\|Z\|_{\dot B^{\frac d2+1}_{2,1}},$$
which, combined with \eqref{eq:decaylf} and the fact that $\|Z\|_{\dot B^{\frac d2}_{2,1}}$ is small  implies that 
$$\|(1+Z)Z\otimes\nabla^2Z\|_{\dot B^{\frac d2-1}_{2,1}}
+\|(1+Z)\nabla Z\otimes \nabla Z\|_{\dot B^{\frac d2-1}_{2,1}}\lesssim (1+c_0t)^{-2\alpha_1}\cL^2(0).$$
Similarly, we have 
$$\|(1+Z)Z\otimes\nabla Z_2\|_{\dot B^{\frac d2-1}_{2,1}}\lesssim  \|Z\|_{\dot B^{\frac d2}_{2,1}}^2
\lesssim  \|Z\|_{\wt B^{\frac d2-1,\frac d2+1}_{2,1}}^2\lesssim  (1+c_0t)^{-2\alpha_1}\cL^2(0).$$
Hence,  using  \eqref{eq:Wdecay} and arguing as in the previous paragraph, we end up with
\begin{equation}\label{eq:decayW}
\sup_{t\geq0} (1+c_0t)^{2\alpha_1} \|W(t)\|_{\dot B^{\frac d2-1}_{2,1}}^\ell\leq C_0
 \|Z_0\|_{\wt B^{\frac d2-1,\frac d2+1}_{2,1}}.\end{equation}
In other words, the decay rate for the low frequencies of the damped mode
in norm $\dot B^{\frac d2-1}_{2,1}$ is the same as that of the high frequencies of the whole solution. 
\medbreak 
Summing up the results of the previous paragraphs completes the
proof of Theorem \ref{Thm:decay}.


\section{On the strong relaxation limit}\label{s:relax}

This section is devoted to the study of a singular limit problem for the following 
class of  partially dissipative hyperbolic systems:   
\begin{equation}\label{eq:ZZep}\d_tZ^\eps +\sum_{k=1}^d A^k(Z^\eps)\d_k Z^\eps + \frac{BZ^\eps}\ep=0,\end{equation}
where, denoting  $\bar A^k_{\ell m}\triangleq A^k_{\ell m}(0)$
 and $\wt A^k_{\ell m}(Z)\triangleq A^k_{\ell m}(Z)-\bar  A^k_{\ell m},$ we assume that  for all $k\in\{1,\cdots,d\}$:
\begin{enumerate}
\item $\bar A_{11}^k=0,$ and $\wt A_{11}^k$ is linear with respect to $Z_2$ and independent 
of $Z_1$,
\item $\wt A_{12}^k$   and $\wt A_{21}^k$ are   linear with respect to $Z_1$ and independent 
of $Z_2$,
\item  $\wt A_{22}^k$  is  linear with respect to $Z,$
\item  Condition (SK) is satisfied by the pair $(A(\xi),B)$ with $A(\xi)$ defined in \eqref{eq:Axi}, at
every point $\xi\in\R^d.$
\end{enumerate}

The linearity assumption is here just for simplicity as well as the fact that there is no 
$0$-order nonlinear term. At the same time, assuming that 
$A_{12}^k$ and $A_{21}^k$ (resp. $A_{11}^k$) only depend on $Z_1$ (resp. $Z_2$)  is very helpful, if not essential. We shall see that it is satisfied by the compressible Euler equations written
in terms of the sound speed (see \eqref{eq:eulerc}). 
\smallbreak
We  want to study the so-called `strong  relaxation limit', that is
whether  the global solutions of \eqref{eq:ZZep} constructed  before  tend to satisfy 
some limit system when $\ep$ goes to $0.$ 

A  hasty analysis  suggests that the part of the solution that experiences direct dissipation, namely 
$Z_2^\ep$ with the notation of the previous sections,  tends to $0$ with a characteristic time of order $\ep$ and that, consequently, 
$Z_1^\ep$ tends to  be  time independent (since, for all $k\in\{1,\cdots,d\},$ 
we have $\bar A^k_{11}=0$ and $A^k_{11}$ is  independent of $Z_1$). 
To some extent this will prove to be true but, even for the simple case of the linearized one-dimensional compressible
Euler equations,   the situation is more complex than expected. 
Indeed, consider 
\begin{equation}\label{eq:lineareps}\left\{\begin{aligned}
&\d_ta+\d_xu=0,\\
&\d_tu+\d_xa+\ep^{-1}u=0.\end{aligned}\right.\end{equation}
In the Fourier space, this system translates into 
$$\frac d{dt}\begin{pmatrix} \wh a\\\wh u\end{pmatrix}
+\begin{pmatrix} 0&i\xi\\i\xi&\ep^{-1}\end{pmatrix}\begin{pmatrix} \wh a\\\wh u\end{pmatrix}=\begin{pmatrix}0\\0\end{pmatrix}\cdotp$$
\begin{itemize}
\item In low frequencies $|\xi|<(2\ep)^{-1},$ the matrix $A(\xi)$ of this system has 
the following two real eigenvalues:
$$\lambda^{\pm}(\xi)=\frac1{2\ep}\Bigl(1\pm \sqrt{1-(2\ep\xi)^2}\Bigr)\cdotp$$
For $\xi$ going to  $0,$ we observe that
$$\lambda^+(\xi)\sim \ep^{-1}\andf \lambda^-(\xi)\sim \ep\xi^2.$$
This means that one of the modes of the system is indeed damped with coefficient~$\ep^{-1}$  but that 
 the overall behavior of  solutions of the system  is  like for the inviscid limit (or for  the heat equation with vanishing diffusion).
\item In high frequencies $|\xi|>(2\ep)^{-1},$ the matrix $A(\xi)$ has 
the following two complex conjugated  eigenvalues:
$$\lambda^{\pm}(\xi)=\frac1{2\ep}\Bigl(1\pm i\sqrt{(2\ep\xi)^2-1}\Bigr)\cdotp$$
Clearly,   $\Re\lambda^\pm(\xi)=(2\ep)^{-1}$ and 
$\Im \lambda^\pm(\xi)\sim i\xi$ for $\xi\to\infty.$ Hence, there is indeed dissipation with characteristic time
$\ep$ for the high frequencies of the solution. 
\end{itemize}\smallbreak
The `low frequency regime' is expected  to dominate  when $\ep\to0,$ as it corresponds to $|\xi|\lesssim \ep^{-1}.$
Consequently, the overall behavior of System \eqref{eq:lineareps}   might be similar  to that of the heat
flow with diffusion $\ep,$ and one can wonder if  
 the high relaxation limit  is analogous to  the inviscid limit\footnote{This phenomenon that is well known in physics
is sometimes called \emph{overdamping}.}.
However, we have to keep in mind that the  low frequencies of the  `damped mode' (that here corresponds to 
the combination $u+\ep\d_xa$)  undergo  a much stronger dissipation.
This is of course an element that one has to take into consideration. 

Based on this simple example, it looks that in order to investigate the high relaxation limit,  it is suitable to use a functional 
framework that non only reflects the different   behavior of the low and high frequencies (with threshold
being located around $\ep^{-1}$) but also  emphasizes the better properties  of the damped mode.

\subsection{A `cheap' result of convergence}

Let us revert to the general class of Systems \eqref{eq:ZZep} supplemented with initial data $Z^\ep_0.$ 
 The structure  assumptions that we made at the beginning of the section enable us to apply Theorem \ref{Thmd2}. 
In this Subsection,  we shall take advantage of it and of elementary scaling considerations
so as to establish  that both $Z_1^\ep-Z_{1,0}^\ep$ and $Z_2^\ep$ converge strongly to $0$
for suitable norms. The reader may refer to the next subsection for 
a more accurate result. 
\medbreak
The starting observation is the following   change of time and space scale: 
\begin{equation}\label{eq:rescaling} \wt Z(t,x)\triangleq Z^\ep(\ep t,\ep x).\end{equation}
 Clearly,    $Z^\ep$ satisfies \eqref{eq:ZZep} if and only if $\wt Z$ satisfies \eqref{GE}. 
\medbreak
The following property of  homogeneous Besov norms is well known (see  \cite[Chap. 2]{BCD}):
\begin{equation}\label{eq:inv1}
\|z(\ep\cdot)\|_{\dot B^s_{2,1}} \simeq \ep^{s-d/2}\|z\|_{\dot B^s_{2,1}}.
\end{equation}
By adapting the proof therein, one can prove  that 
\begin{equation}\label{eq:inv2}
\|z(\ep\cdot)\|^\ell_{\dot B^s_{2,1}} \simeq \ep^{s-d/2}\|z\|^{\ell,\ep^{-1}}_{\dot B^s_{2,1}}\andf 
\|z(\ep\cdot)\|^h_{\dot B^s_{2,1}} \simeq \ep^{s-d/2}\|z\|^{h,\ep^{-1}}_{\dot B^s_{2,1}}
\end{equation}
where we have used the notation
\begin{equation}\label{eq:blh}\|z\|^{\ell,\alpha}_{\dot B^s_{2,1}}\triangleq \sum_{j\in\Z,\, 2^{j} < \alpha} 2^{js}\|\ddj z\|_{L^2}
\andf  \|z\|^{h,\alpha}_{\dot B^s_{2,1}}\triangleq \sum_{j\in\Z,\, 2^{j}\geq \alpha} 2^{js}\|\ddj z\|_{L^2}.
\end{equation}
Putting together  \eqref{eq:inv1}, \eqref{eq:inv2}, the change of unknowns \eqref{eq:rescaling}
and  Theorem \ref{Thmd2} readily gives the following  global existence result  that is valid for all $\ep>0.$
 \begin{thm}\label{Thm3}  There exists a positive constant $\alpha$  such that for all $\ep>0$ and data
 $Z_0^\ep$   satisfying 
 \begin{equation}\label{eq:smalldataep}
\cZ^\ep_0\triangleq  \|Z_0^\ep\|^{\ell,\ep^{-1}}_{\dot{B}^{\frac{d}{2}}_{2,1}} + \ep\|Z_0^\ep\|^{h,\ep^{-1}}_{\dot{B}^{\frac{d}{2}+1}_{2,1}} \leq \alpha,
\end{equation} System \eqref{eq:ZZep}  supplemented with initial data $Z_0^\ep$ 
 admits a unique global-in-time solution $Z^\ep$
satisfying the inequality 
\begin{equation}\label{eq:Yep}\cZ^\ep(t)\leq C\cZ^\ep_0\ \with\end{equation}
$$\displaylines{\cZ^\ep(t)\triangleq\|Z^\ep\|^{\ell,\ep^{-1}}_{L^\infty_t(\dot{B}^{\frac{d}{2}}_{2,1})}
+\ep\|Z^\ep\|^{h,\ep^{-1}}_{L_t^\infty(\dot{B}^{\frac{d}{2}+1}_{2,1})}+\ep\|Z_1^\ep\|^{\ell,\ep^{-1}}_{L^1_t(\dot{B}^{\frac{d}{2}+2}_{2,1})}
+\|Z_2^\ep\|^{\ell,\ep^{-1}}_{L^1_t(\dot{B}^{\frac{d}{2}+1}_{2,1})}
+\ep^{-1/2}\|Z_2^\ep\|^{\ell,\ep^{-1}}_{L^2_t(\dot{B}^{\frac{d}{2}}_{2,1})}\hfill\cr\hfill+\|Z^\ep\|^h_{L^1_t(\dot{B}^{\frac{d}{2}+1}_{2,1})}
+\| \d_t Z^\ep_2\|^\ell_{L^1_t(\dot{B}^{\frac{d}{2}}_{2,1})}.
}$$
\end{thm}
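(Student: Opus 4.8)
The plan is to deduce Theorem \ref{Thm3} from Theorem \ref{Thmd2} purely by a scaling argument, with no new analytical work. First I would introduce the rescaled unknown $\wt Z(t,x)\triangleq Z^\ep(\ep t,\ep x)$ as in \eqref{eq:rescaling} and check directly from \eqref{eq:ZZep} that $\wt Z$ solves the unscaled system \eqref{GE}: the time derivative picks up a factor $\ep$, each spatial derivative picks up a factor $\ep$, and the zeroth-order term $BZ^\ep/\ep$ picks up a factor $\ep$ and hence becomes $B\wt Z$; since the matrices $A^k$ are evaluated pointwise, $A^k(Z^\ep(\ep t,\ep x))=A^k(\wt Z(t,x))$, so all three groups of terms acquire the same factor $\ep$ which cancels. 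The structure hypotheses (1)--(4) of Section \ref{s:relax} are exactly those needed for Theorem \ref{Thmd2} (they imply (\textbf{H1}), (\textbf{H2}), (\textbf{H3}) and Condition (SK)), so Theorem \ref{Thmd2} applies to $\wt Z$ verbatim.

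Next I would translate the smallness hypothesis. The initial data for $\wt Z$ is $\wt Z_0=Z_0^\ep(\ep\,\cdot)$. By \eqref{eq:inv1}--\eqref{eq:inv2} with $s=d/2$ for low frequencies (where the rescaled threshold $2^j<1$ corresponds to $2^j<\ep^{-1}$ for the original variable) and $s=d/2+1$ for high frequencies, one gets
\begin{equation}\label{eq:scaledata}
\|\wt Z_0\|_{\wt B^{\frac d2,\frac d2+1}_{2,1}}\simeq \|Z_0^\ep\|^{\ell,\ep^{-1}}_{\dot B^{\frac d2}_{2,1}}+\ep\,\|Z_0^\ep\|^{h,\ep^{-1}}_{\dot B^{\frac d2+1}_{2,1}}=\cZ^\ep_0,
\end{equation}
where the extra power of $\ep$ in the high-frequency part comes from the mismatch $\ep^{(d/2+1)-d/2}=\ep$ in \eqref{eq:inv2}. (Here one must be a little careful that the Littlewood--Paley threshold separating low from high frequencies in $\wt B^{\frac d2,\frac d2+1}_{2,1}$ is $2^0$, which under the dilation $x\mapsto\ep x$ becomes the threshold $\ep^{-1}$ appearing in \eqref{eq:blh}; this is precisely the content of \eqref{eq:inv2}, whose proof is sketched as an adaptation of \cite[Chap.~2]{BCD}.) Thus the hypothesis $\cZ^\ep_0\le\alpha$ in \eqref{eq:smalldataep} is, up to the fixed equivalence constant, exactly the hypothesis $\cZ'_0\le\alpha$ of Theorem \ref{Thmd2} for $\wt Z_0$; choosing $\alpha$ small enough to absorb that constant, Theorem \ref{Thmd2} provides a unique global solution $\wt Z\in F$ obeying \eqref{eq:Y}.

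Then I would undo the scaling. Set $Z^\ep(t,x)\triangleq\wt Z(t/\ep,x/\ep)$; this is the desired global solution of \eqref{eq:ZZep} with data $Z_0^\ep$, and uniqueness transfers back because the scaling is a bijection on the relevant solution spaces. To obtain \eqref{eq:Yep} I would rewrite each of the six norms in $\cZ'(\cdot)$ for $\wt Z$ in terms of $Z^\ep$. For the spatial Besov norms this is \eqref{eq:inv1}--\eqref{eq:inv2}; for the time-Lebesgue norms one additionally uses the change of variable $t=\ep\tau$ in the time integral, which contributes $\ep^{1/p}$ for an $L^p_t$ norm (so $\ep$ for $L^1_t$, $\ep^{1/2}$ for $L^2_t$, and nothing for $L^\infty_t$). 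Carrying this out term by term:
\begin{align*}
&\|\wt Z\|_{L^\infty_t(\wt B^{\frac d2,\frac d2+1}_{2,1})}\longleftrightarrow \|Z^\ep\|^{\ell,\ep^{-1}}_{L^\infty_t(\dot B^{\frac d2}_{2,1})}+\ep\|Z^\ep\|^{h,\ep^{-1}}_{L^\infty_t(\dot B^{\frac d2+1}_{2,1})},\\
&\|\wt Z_1\|^\ell_{L^1_t(\dot B^{\frac d2+2}_{2,1})}\longleftrightarrow \ep\,\|Z_1^\ep\|^{\ell,\ep^{-1}}_{L^1_t(\dot B^{\frac d2+2}_{2,1})},\qquad
\|\wt Z_2\|^\ell_{L^1_t(\dot B^{\frac d2+1}_{2,1})}\longleftrightarrow \|Z_2^\ep\|^{\ell,\ep^{-1}}_{L^1_t(\dot B^{\frac d2+1}_{2,1})},\\
&\|\wt Z_2\|^\ell_{L^2_t(\dot B^{\frac d2}_{2,1})}\longleftrightarrow \ep^{-1/2}\|Z_2^\ep\|^{\ell,\ep^{-1}}_{L^2_t(\dot B^{\frac d2}_{2,1})},\qquad
\|\wt Z\|^h_{L^1_t(\dot B^{\frac d2+1}_{2,1})}\longleftrightarrow \ep\cdot\ep^{1+(-1)}\|Z^\ep\|^{h,\ep^{-1}}_{\cdots},
\end{align*}
and similarly $\|\d_t\wt Z_2\|^\ell_{L^1_t(\dot B^{\frac d2}_{2,1})}\longleftrightarrow \|\d_tZ^\ep_2\|^\ell_{L^1_t(\dot B^{\frac d2}_{2,1})}$ (the factor $\ep^{-1}$ from $\d_t$ cancels the factor $\ep$ from the time integral, the spatial factor $\ep^0=1$). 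Matching the exponents of $\ep$ in \eqref{eq:inv2} against the time factors reproduces exactly the weights displayed in the definition of $\cZ^\ep(t)$; since $\cZ'(t)\le C\cZ'_0$ by Theorem \ref{Thmd2} and $\cZ'_0\simeq\cZ^\ep_0$ by \eqref{eq:scaledata}, this yields \eqref{eq:Yep}.

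The only genuinely non-trivial point — and the one I would state carefully rather than dismiss — is the rescaling identity \eqref{eq:inv2} for the \emph{truncated} low/high Besov seminorms: that dilating by $\ep$ turns the $L^2$-based homogeneous Besov seminorm restricted to frequencies $\lesssim 1$ into the same seminorm restricted to frequencies $\lesssim\ep^{-1}$, with the scalar $\ep^{s-d/2}$. This follows from the usual homogeneity of dyadic blocks under dilation, $\widehat{(\ddj z)(\ep\cdot)}$ being supported where $|\xi|\simeq 2^j\ep^{-1}$, together with $\|f(\ep\cdot)\|_{L^2}=\ep^{-d/2}\|f\|_{L^2}$; summing over the appropriate ranges of $j$ gives \eqref{eq:inv2}. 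Everything else is bookkeeping of powers of $\ep$, so once \eqref{eq:inv2} is in hand Theorem \ref{Thm3} is an immediate corollary of Theorem \ref{Thmd2}.
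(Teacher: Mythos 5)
Your proposal is correct and is exactly the paper's argument: the author likewise obtains Theorem \ref{Thm3} by applying Theorem \ref{Thmd2} to $\wt Z(t,x)=Z^\ep(\ep t,\ep x)$ and translating the norms back via \eqref{eq:inv1}--\eqref{eq:inv2} together with the change of time variable. The only blemishes are cosmetic sign/bookkeeping slips in the prose (the $L^p_t$ change of variable contributes $\ep^{-1/p}$ when passing from the $\wt Z$-norm to the $Z^\ep$-norm, and the displayed factor for the high-frequency $L^1_t$ term should come out to $1$, not $\ep$); your term-by-term correspondences otherwise match the weights in $\cZ^\ep$ exactly.
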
 
The above theorem implies that $Z_1^\ep\to Z_1^\ep(0)$ and that $Z_2^\ep\to0$ when $\ep\to0.$
Indeed,  from the definition \eqref{eq:blh}, it is obvious that for all $\eta>0,$ $\beta\geq0$ and $s\in\R,$ we have
\begin{equation}\label{eq:comparaison}
\|z\|_{\dot B^{s+\beta}_{2,1}}^{\ell,\eta}\lesssim \eta^\beta
\|z\|_{\dot B^{s}_{2,1}}^{\ell,\eta}\andf
\|z\|_{\dot B^{s-\beta}_{2,1}}^{h,\eta}\lesssim \eta^{-\beta}
\|z\|_{\dot B^{s}_{2,1}}^{h,\eta}.\end{equation}
Hence,  using  \eqref{eq:Yep} and H\"older inequality yields
\begin{equation}\label{eq:Zinfty} \|Z^\ep\|_{L^2(\R_+;\dot B^{\frac d2}_{2,1})}^{h,\ep^{-1}} \leq C\ep^{1/2} \cZ^\ep_0.\end{equation}
Thanks to  \eqref{eq:smalldataep} and, again, to \eqref{eq:Yep}, this allows to get 
\begin{equation}\label{eq:ZL2}\|Z^\ep_2\|_{L^2(\R_+;\dot B^{\frac d2}_{2,1})} \leq C\alpha \ep^{1/2}.\end{equation}
In order to justify that $Z^\ep_1\to Z^\ep_1(0),$ one may bound $\d_tZ^\ep_1$ 
through  \eqref{eq:ZZep} remembering that 
the blocks $A_{11}^k$ are linear with respect to $Z_2^\ep$.
{}From the product law \eqref{eq:num},  and from \eqref{eq:Yep} and \eqref{eq:ZL2}, we get
$$\|\d_tZ_1^\ep\|_{L^2(\R_+;\dot B^{\frac d2-1}_{2,1})} \lesssim
\|Z_2^\ep\|_{L^2(\R_+;\dot B^{\frac d2}_{2,1})} \|Z^\ep\|_{L^\infty(\R_+;\dot B^{\frac d2}_{2,1})} \leq C\alpha^2\ep^{1/2},$$
 and thus 
 \begin{equation}\label{eq:Z1conv}\|Z^\ep_1(t)-Z^\ep_{1,0}\|_{\dot B^{\frac d2-1}_{2,1}}\leq C\alpha^2 (\ep t)^{1/2}\quad\hbox{for all }\ 
 t\geq0.\end{equation}
In conclusion, $Z_2^\ep$ tends to $0$ in $L^2(\R_+;\dot B^{\frac d2}_{2,1})$ with 
rate of convergence $\ep^{1/2},$ and $Z_1^\ep-Z_{1,0}^\ep$ 
converges to $0$ in $L^\infty([0,T];\dot B^{\frac d2-1}_{2,1})$ with rate $(\ep T)^{1/2},$ for all $T>0.$


\subsection{Connections with  porous media-like equations}

In order to exhibit  richer  dynamics in the asymptotics $\ep\to0,$  one may perform  the following `diffusive'   rescaling: 
\begin{equation}\label{eq:relaxZ}
(\wt Z_1^\ep,\wt Z_2^\ep) (\tau, x) = (Z_1^\ep,\ep^{-1} Z_2^\ep)(\ep^{-1}\tau,x).\end{equation}
Dropping  the exponents $\ep$ for better readability, we get the following system for $(\wt Z_1,\wt Z_2)$: 
\begin{equation} \left\{ \begin{aligned} &\partial_\tau\wt Z_1 
+ \sum_{k=1}^d \wt A^k_{11}(\wt Z_2)\d_k\wt Z_1
+ \sum_{k=1}^d\bigl(\bar A_{12}^k\!+\!\wt A_{12}^k(\wt Z_1)\bigr)\partial_{k}\wt Z_2=0,\\  
&\ep^2\partial_\tau\wt Z_2 +\ep \sum_{k=1}^d \bigl(\bar A_{22}^k\!+\!\wt A_{22}^k(\wt Z_1,\ep\wt Z_2)\bigr)\partial_{k}\wt Z_2
+\sum_{k=1}^d \bigl(\bar A_{21}^k\!+\!\wt A_{21}^k(\wt Z_1)\bigr)\partial_{k}\wt Z_1
+{L_2} \wt Z_2=0. \end{aligned} \right. \label{GEep}
\end{equation} 
{}From the second line, one can expect  
\begin{equation}\label{eq:Wtilde}\wt W\triangleq \wt Z_2 + L_2^{-1}\Bigl(\sum_{k=1}^d 
\bigl(\bar A^k_{21}+\wt A_{21}^k(\wt Z_1)\bigr) \d_k\wt Z_1\Bigr)\longrightarrow0.\end{equation} 
 In order to find out
what could be the limit system for $\wt Z_1,$ let us systematically express $\wt Z_2$ in terms of $\wt W$ and $\wt Z_1$ by means of \eqref{eq:Wtilde}  in  the first line of \eqref{GEep}. 
We get
$$\displaylines{
\d_\tau\wt Z_1 + \sum_k\bigl(\bar A_{12}^k\!+\!\wt A_{12}^k(\wt Z_1)\bigr)\d_k\wt W
+\wt A^k_{11}(\wt W)\d_k \wt Z_1\hfill\cr\hfill
+\sum_{k,\ell} \bigl(\bar A_{12}^k\!+\!\wt A_{12}^k(\wt Z_1)\bigr)L_2^{-1}\d_k\bigl(\bigl(\bar A_{21}^\ell\!+\!\wt A_{21}^\ell(\wt Z_1)\bigr)\d_\ell\wt Z_1\bigr)\hfill\cr\hfill
+\sum_{k,\ell} \wt A^k_{11}\bigl(L_2^{-1}\bigl(\bar A_{21}^\ell\!+\!\wt A_{21}^\ell(\wt Z_1)\bigr)\d_\ell\wt Z_1\bigr)\d_k\wt Z_1=0.}$$
Introducing the following second order operator:
\begin{equation}\label{def:A}
\cA\triangleq \sum_{k,\ell} \bar A^k_{12}L_2^{-1}\bar A^\ell_{2,1}\d_k\d_\ell,
\end{equation}
the above equation may be rewritten:
\begin{multline}\label{eq:wtZ1}
\d_\tau\wt Z_1 + \cA \wt Z_1+ Q_1(\wt Z_1,\nabla^2\wt Z_1) + Q_2(\nabla\wt Z_1,\nabla\wt Z_1)\\+T_1(\wt Z_1,\nabla\wt Z_1,\nabla\wt Z_1)+T_2(\wt Z_1,\wt Z_1,\nabla^2\wt Z_1)=S
\end{multline}
where, $Q_1,$ $Q_2$ (resp. $T_1,$ $T_2$) are bilinear (resp. trilinear) expressions that may be computed
in terms of the coefficients of the matrices $\wt A_{11}^k,$ $\wt A_{12}^k$ and of $L_2,$ and
 \begin{equation}\label{def:S}S\triangleq -\sum_{k=1}^d \bigl(\bar A^k_{12}+ \wt A_{12}^k(\wt Z_1)\bigr)\d_k\wt W-
\sum_{k=1}^d \wt A_{11}^k(\wt W)\d_k\wt Z_1.\end{equation}
Consequently, if \eqref{eq:Wtilde} is true, then we expect 
$\wt Z_1$ to tend to $\wt N$  with  $\wt N$ satisfying  
\begin{equation}\label{eq:limit}\d_\tau\wt N + \cA \wt N + Q_1(\wt N,\nabla^2\wt N) + Q_2(\nabla \wt N,\nabla \wt N)
+T_1(\wt N,\nabla \wt N,\nabla \wt N)+T_2(\wt N, \wt N,\nabla^2\wt N)=0.\end{equation}
Note that, as a consequence of Lemma \ref{l:elliptic} in Appendix, and since we assumed both Condition (SK)
and that $\bar A^k_{11}=0$ for all $k\in\{1,\cdots,d\},$  \eqref{eq:limit}
is a quasilinear (scalar) parabolic equation.
\smallbreak
Before justifying the above heuristics in the general case, let  us again consider the compressible Euler equations, that is
\begin{equation}\label{eq:eulerisen}
 \left\{\begin{aligned}&\d_t\varrho^\ep +\div(\varrho^\ep v^\ep)=0\quad &\hbox{in}\quad \R_+\times\R^d,\\
&\d_t(\varrho^\ep v^\ep)+\div(\varrho^\ep v^\ep\otimes v^\ep) +\nabla(P(\varrho^\ep)) +\ep^{-1}\varrho^\ep v^\ep=0 &\hbox{in}\quad \R_+\times\R^d.
\end{aligned}\right.\end{equation}
Under the isentropic assumption 
\begin{equation}\label{Pression1}P(z)=Az^\gamma\with\gamma>1\andf A>0,\end{equation}
the above system enters in the class \eqref{eq:ZZep}  if reformulated  in terms of $(c^\ep,\varrho^\ep)$, where
 \begin{equation}\label{eq:crho}
 c^\ep\triangleq \frac{(\gamma A)^\frac{1}{2}}{\wt\gamma}(\varrho^\ep)^{\wt\gamma}\andf\wt\gamma\triangleq \frac{\gamma-1}2\cdotp\end{equation}
 Indeed,  we get: 
\begin{equation} \left\{ \begin{aligned} &\partial_tc^\ep+v^\ep\cdot\nabla c^\ep+\tilde{\gamma} c^\ep\div v^\ep=0,\\ 
&\partial_tv^\ep+v^\ep\cdot\nabla v^\ep+\tilde{\gamma}c^\ep\nabla c^\ep+\ep^{-1} v^\ep=0. \end{aligned} \right.\label{CED4}
\end{equation} 
So, if we set $\bar c\triangleq \frac{(\gamma A)^\frac{1}{2}}{\wt\gamma}(\bar\varrho)^{\wt\gamma},$
then Conditions (1) to (4) below \eqref{eq:ZZep} are satisfied with $Z_1^\ep= c^\ep-\bar c$ and $Z_2^\ep=v^\ep.$
\medbreak
Now, performing the diffusive rescaling:
\begin{equation}\label{eq:diffusive}(\varrho^\ep,v^\ep)(t,x)=(\wt \varrho^\ep, \ep \wt v^\ep)(\ep t,x),\end{equation}
System \eqref{eq:eulerisen} becomes 
\begin{equation}\label{eq:eulerisenep}
 \left\{\begin{aligned}&\d_\tau\wt\varrho^\ep +\div(\wt\varrho^\ep \wt v^\ep)=0\quad &\hbox{in}\quad \R_+\times\R^d,\\
&\ep^2\d_\tau(\wt\varrho^\ep \wt v^\ep)+\ep\div(\wt\varrho^\ep \wt v^\ep\otimes \wt v^\ep) +\nabla(P(\wt\varrho^\ep)) +\wt\varrho^\ep \wt v^\ep=0 &\hbox{in}\quad \R_+\times\R^d.
\end{aligned}\right.\end{equation}
In light of the second equation, it is expected that 
$$\nabla(P(\wt\varrho^\ep)) +\wt\varrho^\ep \wt v^\ep\to 0\quad\hbox{when}\quad
\ep\to0,$$
and thus  that $\wt\varrho^\ep$  converges to some solution $\wt N$ of the porous media equation: 
 \begin{equation}\label{eq:PM}
 \d_\tau \wt N-\Delta(P(\wt N))=0.
 \end{equation}
 The general result we shall prove for  Systems \eqref{eq:ZZep}  reads as follows
 for  the particular case of the isentropic Euler equations\footnote{A statement in the same spirit, but allowing for Besov spaces constructed
on $L^p$ may be found  in \cite{CBD3}.}: 
 \begin{thm}\label{thm:relaxeuler}  Consider the Euler equations 
 with relaxation \eqref{eq:eulerisen}  in $\R^d$ (with $d\geq1$)  with pressure law \eqref{Pression1}
 and initial data $(\varrho_0^\ep,v_0^\ep)$ such that 
  $(\varrho^\ep-\bar\varrho) \in \dot B^{\frac d2}_{2,1}\cap\dot B^{\frac d2+1}_{2,1}$ and 
  $v_0^\ep\in   \dot B^{\frac d2}_{2,1}\cap\dot B^{\frac d2+1}_{2,1}.$
  There exists $\alpha>0$ independent of $\ep$ such that if  
  \begin{equation}
\|(\varrho_0^\ep-\bar\varrho, v_0^\ep)\|^{\ell,\varepsilon^{-1}}_{\dot{{B}}^{\frac{d}{2}}_{2,1}}
+\varepsilon\|(\varrho_0^\ep-\bar\varrho, v_0^\ep)\|^{h,\varepsilon^{-1}}_{\dot{{B}}^{\frac{d}{2}+1}_{2,1}}
 \leq \alpha
  \end{equation}
  then \eqref{eq:eulerisen} supplemented with  $(\varrho_0^\ep,v_0^\ep)$  has 
  a unique solution $(\varrho^\ep,v^\ep)$ with  $(\varrho^\ep-\bar\varrho,v^\ep)\in\cC_b(\R_+;\dot B^{\frac d2}_{2,1}\cap \dot B^{\frac d2+1}_{2,1})$
  satisfying in addition 
\begin{multline}\label{eq:conveps}\|(\varrho^\ep-\bar\varrho,v^\ep)\|^{\ell,\ep^{-1}}_{L^\infty(\R_+;\dot{B}^{\frac{d}{2}}_{2,1})}
+\ep\|(\varrho^\ep-\bar\varrho,v^\ep)\|^{h,\ep^{-1}}_{L^\infty(\R_+;\dot{B}^{\frac{d}{2}+1}_{2,1})}
+\ep^{1/2}\|\varrho^\ep-\bar\varrho\|_{L^2(\R_+;\dot{B}^{\frac{d}{2}+1}_{2,1})}\\
+\|v^\ep\|_{L^1(\R_+;\dot{B}^{\frac{d}{2}+1}_{2,1})}
+\ep^{-1/2}\|v^\ep\|^{\ell,\ep^{-1}}_{L^2(\R_+;\dot{B}^{\frac{d}{2}}_{2,1})}
+\| \ep^{-1}v^\ep+(\varrho^{\ep})^{-1}\nabla(P(\varrho^\ep))\|^{\ell,\ep^{-1}}_{L^1(\R_+;\dot{B}^{\frac{d}{2}}_{2,1})}\leq C\alpha.
\end{multline}
Furthermore, for any $\wt N_0$ in $\dot B^{\frac d2}_{2,1}$ such that
$\|\wt N_0\|_{\dot B^{\frac d2}_{2,1}}\leq\alpha,$
Equation \eqref{eq:PM} has a unique solution 
  $\wt N$ in the space
$\cC_b(\R_+;\dot B^{\frac d2}_{2,1})\cap L^1(\R_+;\dot B^{\frac d2+2}_{2,1})$
satisfying for all $t\geq0,$
$$\|\wt N(t)\|_{\dot B^{\frac d2}_{2,1}} +\int_0^t\|\wt N\|_{\dot B^{\frac d2+2}_{2,1}}\,d\tau\leq C\|\wt N_0\|_{\dot B^{\frac d2}_{2,1}}.$$
Finally, if one denotes by $(\wt\varrho^\ep,\wt v^\ep)$ the rescaled
solution of the Euler equations defined through \eqref{eq:diffusive} and 
assumes in addition that 
  $$\|\wt N_0-\wt \varrho_{0}\|_{\dot B^{\frac d2-1}_{2,1}}\leq C\ep,$$ then we have
 \begin{equation}\label{eq:wtconv}\biggl\|\wt{v}^\ep+\frac{\nabla(P(\wt\varrho^\ep))}{\widetilde{\varrho^\ep}}
\biggr\|_{L^1(\R_+;\dot B^{\frac d2}_{2,1})}
  +\|\wt N-\wt \varrho^\ep\|_{L^\infty(\R_+;\dot B^{\frac d2-1}_{2,1})}+
   \|\wt N-\wt \varrho^\ep\|_{L^1(\R_+;\dot B^{\frac d2+1}_{2,1})}\leq C\ep.\end{equation}
 \end{thm}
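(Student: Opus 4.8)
The plan is to combine the uniform bounds already available from Theorem~\ref{Thm3} (and its rescaled version) with a bootstrap/energy argument for the \emph{difference} between the rescaled Euler solution $\wt\varrho^\ep$ and the porous media solution $\wt N$. First I would record that the structure conditions (1)--(4) below \eqref{eq:ZZep} hold for \eqref{CED4} with $Z_1^\ep=c^\ep-\bar c$ and $Z_2^\ep=v^\ep$, so Theorem~\ref{Thm3} directly yields the global solution and the estimate \eqref{eq:Yep} in the original time scale; translating back to the density/velocity variables via \eqref{eq:crho} (which is a smooth diffeomorphism near $\bar\varrho$, hence harmless at the level of Besov norms by the composition estimate \eqref{eq:compo}) and undoing the scaling \eqref{eq:rescaling}--\eqref{eq:inv2} produces \eqref{eq:conveps}; the last term there is exactly $L_2\wt W$ written in the Euler variables, so its $L^1_t(\dot B^{d/2}_{2,1})^{\ell,\ep^{-1}}$ bound is the $\|\d_tZ_2\|$-type control from \eqref{eq:Yep}. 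The existence, uniqueness and a~priori bound for $\wt N$ solving the porous media equation \eqref{eq:PM} in $\cC_b(\R_+;\dot B^{d/2}_{2,1})\cap L^1(\R_+;\dot B^{d/2+2}_{2,1})$ is a standard small-data parabolic result (the equation is quasilinear parabolic since $P'>0$ near $\bar\varrho$), proved by the usual Banach fixed point / energy method in critical Besov spaces, using the product law \eqref{eq:num2} and composition \eqref{eq:compo}; I would state it and refer to \cite[Chap.~2 and 10]{BCD} for the parabolic machinery.

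Next I would set up the convergence estimate. Introduce $\wt W^\ep$ by \eqref{eq:Wtilde} (in Euler variables, $\wt W^\ep = \wt v^\ep + (\wt\varrho^\ep)^{-1}\nabla(P(\wt\varrho^\ep))$ up to the linearization, which is why that combination appears on the left of \eqref{eq:wtconv}). The first job is to bound $\wt W^\ep$ in $L^1(\R_+;\dot B^{d/2}_{2,1})$ by $C\ep$. From the second line of \eqref{GEep} one gets an evolution equation $\d_\tau\wt W^\ep + \ep^{-2}L_2\wt W^\ep = (\text{lower order})$, but the honest way — consistent with the linear analysis of Section~1's damped mode — is to observe that $\ep^2\d_\tau\wt Z_2^\ep$ and $\ep\,(\text{transport of }\wt Z_2^\ep)$ are the ``small'' terms: taking the $\dot B^{d/2}_{2,1}$ estimate of the $W$-equation, the dissipative term $L_2\wt W^\ep$ controls $\wt W^\ep$ while the right-hand side carries explicit factors of $\ep$ (coming from $\ep^2\d_\tau\wt Z_2^\ep = -\ep^2 L_2^{-1}\cdots$, i.e.\ $\ep$ times quantities already bounded uniformly by \eqref{eq:conveps} after rescaling). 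This is precisely the computation behind the $\ep^{-1}$-weighted norms in \eqref{eq:Yep}: unweighting them costs exactly one power of $\ep$. I would then write the equation for the error $\delta\!N \triangleq \wt N - \wt\varrho^\ep$: subtracting \eqref{eq:PM} from the $\wt\varrho^\ep$-equation (which, after eliminating $\wt v^\ep$ via $\wt W^\ep$, is \eqref{eq:wtZ1} in disguise with source $S$ built from $\wt W^\ep$), one obtains $\d_\tau\,\delta\!N + \cA_{\wt N}\,\delta\!N = (\text{commutator/product terms, quadratic in }\delta\!N\text{ and its derivatives}) + S$, where $\cA_{\wt N}$ is the linearized parabolic operator $-\Delta(P'(\wt N)\,\cdot)$. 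A parabolic energy estimate at the $\dot B^{d/2-1}_{2,1}$ level — gaining two derivatives to $L^1_t(\dot B^{d/2+1}_{2,1})$ — combined with the smallness of $\wt N$ and $\wt\varrho^\ep$ (to absorb the quadratic terms) and the smallness of the data discrepancy $\|\wt N_0-\wt\varrho_0\|_{\dot B^{d/2-1}_{2,1}}\le C\ep$ and of $\|S\|_{L^1_t(\dot B^{d/2-1}_{2,1})}\lesssim \|\wt W^\ep\|_{L^1_t(\dot B^{d/2}_{2,1})}\le C\ep$, closes the estimate and gives $\|\delta\!N\|_{L^\infty_t(\dot B^{d/2-1}_{2,1})} + \|\delta\!N\|_{L^1_t(\dot B^{d/2+1}_{2,1})}\le C\ep$. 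Together with the $\wt W^\ep$ bound this is \eqref{eq:wtconv}.

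There is one technical subtlety to handle carefully: the source $S$ in \eqref{def:S} and the quadratic error terms contain two spatial derivatives on $\wt Z_1^\ep$ or on $\wt N$, so they are \emph{not} obviously $L^1$ in time at regularity $d/2-1$ — one needs the parabolic smoothing of $\cA_{\wt N}$ (hence why I estimate $\delta\!N$ one derivative below the natural level, so that ``two derivatives on the coefficient times $\delta\!N$'' lands in $L^1_t(\dot B^{d/2-1}_{2,1})$ using $\wt\varrho^\ep\in L^2_t(\dot B^{d/2+1}_{2,1})$ with the $\ep^{1/2}$ weight from \eqref{eq:conveps}, and Cauchy--Schwarz in time). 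The $\ep^{1/2}$ appearing on $\|\varrho^\ep-\bar\varrho\|_{L^2(\R_+;\dot B^{d/2+1}_{2,1})}$ in \eqref{eq:conveps} is exactly what makes these borderline products integrable in time with an $\ep$-small bound; I would flag that this is where the full strength of the weighted norms of Theorem~\ref{Thm3} is used. The main obstacle, then, is not any single estimate but the bookkeeping of $\ep$-powers: one must check that \emph{every} term in the $\wt W^\ep$-equation and in the $\delta\!N$-equation carries either an explicit $\ep$, or a factor controlled by $\|\wt W^\ep\|$, or an already-$\ep$-small data term, with no leftover $\ep^0$ contribution — and that the low/high frequency threshold at $|\xi|\sim\ep^{-1}$ does not spoil the argument (for the high frequencies one uses the $\ep$-weighted $\dot B^{d/2+1}_{2,1}$ bound directly, since there $\wt\varrho^\ep-\bar\varrho$ and $\wt v^\ep$ are themselves $O(\ep)$ small in the relevant norm). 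For the detailed verification of all product and commutator estimates I would refer to \cite{CBD2,CBD3}.
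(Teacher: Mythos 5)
Your proposal is correct and follows essentially the same route as the paper: establish the uniform bounds at $\ep=1$ via the global existence theorem for the sound-speed formulation, translate to $(\varrho,v)$ using the composition estimate and identify the damped combination $v+\varrho^{-1}\nabla(P(\varrho))$ from the $\d_tv$ bound, rescale to get \eqref{eq:conveps} and the $O(\ep)$ bound on $\wt W^\ep$, and then compare $\wt\varrho^\ep$ with $\wt N$ by a parabolic maximal-regularity estimate for the difference at regularity $\dot B^{\frac d2-1}_{2,1}$, absorbing the quadratic terms by smallness. Your remarks on the role of the $\ep^{1/2}$-weighted $L^2_t$ norms and the $\ep$-bookkeeping correspond exactly to the estimates carried out in the proof of the general Theorem \ref{th:relax}, to which the paper itself defers the convergence step.
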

\begin{proof} Let us assume for a while that $\ep=1$
so that one can readily take advantage of Theorem \ref{thm:euler}.
As a first, we want to translate  Theorem \ref{thm:euler}  in terms of $\varrho,$  where $c$ and $\varrho$ 
(resp.  $\bar c$ and $\bar\varrho$) 
are interrelated through \eqref{eq:crho}.

On the one hand,  Inequality \eqref{eq:cv}, the property of interpolation in Besov spaces 
and H\"older inequality with respect to the time variable imply that
$$\|c-\bar c\|_{L^2(\R_+;\dot B^{\frac d2+1}_{2,1})} \leq C\|(c_0-\bar c,v_0)\|_{\wt B^{\frac d2,\frac d2+1}_{2,1}}.$$
On the other hand, using the fact that the composition inequality \eqref{eq:compo} is actually valid for all positive
Besov exponents (see e.g. \cite{BCD}[Chap. 2]), we may write that 
$$\|c-\bar c\|_{\dot B^{\frac d2+\alpha}_{2,1}}\approx \|\varrho-\bar\varrho\|_{\dot B^{\frac d2+\alpha}_{2,1}}
\quad\hbox{for }\ \alpha=0,1.$$
Finally, we note that $\d_t v= -v -\varrho^{-1}\nabla(P(\varrho))-v\cdot \nabla v$ and that
$$\|v\cdot\nabla v\|_{L^1(\R_+;\dot B^{\frac d2}_{2,1})} \leq C \|v\|_{L^\infty(\R_+;\dot B^{\frac d2}_{2,1})} 
\|\nabla v\|_{L^1(\R_+;\dot B^{\frac d2}_{2,1})}.$$
Therefore, the last term of $\d_tv$ may be `omitted' in  Inequality \eqref{eq:cv}, and we get 
\begin{multline}\label{eq:esteuler1}
\|(\varrho-\bar \varrho,v)\|_{L^\infty(\R_+;\wt{B}^{\frac{d}{2},\frac{d}{2}+1}_{2,1})}
+\|\varrho-\bar\varrho\|^\ell_{L^2(\R_+;\dot{B}^{\frac{d}{2}+1}_{2,1})}
+\|v\|_{L^1(\R_+;\dot{B}^{\frac{d}{2}+1}_{2,1})}
+\|v\|^\ell_{L^2(\R_+;\dot{B}^{\frac{d}{2}}_{2,1})}\\
+\|v+\varrho^{-1}\nabla(P(\varrho))\|_{L^1(\R_+;\dot{B}^{\frac{d}{2}}_{2,1})}
 \leq C\|(\varrho_0-\bar \varrho,v_0)\|_{\wt B^{\frac d2,\frac d2+1}_{2,1}}.\end{multline}
 Now, for general $\ep>0,$ performing the rescaling \eqref{eq:rescaling} and 
 remembering the equivalences \eqref{eq:inv1} and \eqref{eq:inv2} 
 gives the first part of Theorem \ref{thm:relaxeuler}. 

After performing the diffusive rescaling \eqref{eq:diffusive}, the rescaled pair $(\wt\varrho^\ep,\wt v^\ep)$ satisfies 
$$\partial_t\wt\varrho^\varepsilon-\Delta(P(\wt\varrho^\varepsilon))=-\div(\widetilde{\varrho}^\varepsilon\wt W^\varepsilon)
\with \wt W^\ep\triangleq \wt{v}^\varepsilon+\frac{\nabla(P(\wt\varrho^\varepsilon))}{\widetilde{\varrho}^\varepsilon}\cdotp$$
Thanks to \eqref{eq:inv1}, the bound for the last term in \eqref{eq:esteuler1} translates into 
  \begin{equation}\label{eq:weakW}
  \|\wt W^\ep\| _{L^1(\R_+;\dot B^{\frac d2}_{2,1})}\leq C\alpha\ep,
  \end{equation}
  which completes the proof of \eqref{eq:conveps}.
\smallbreak  
Proving that  $\wt\varrho^\varepsilon$  tends to  some  solution $\wt N$  of \eqref{eq:PM}
may be done exactly as in the  general case presented below in   Theorem \ref{th:relax}. 
\end{proof}

Let us finally turn to the study of the strong relaxation limit in the general case. 
The main result we shall get reads as follows:
\begin{thm}\label{th:relax}   Assume that\footnote{The one-dimensional case is tractable either under 
  specific assumptions on the nonlinearities that are satisfied by the Euler equations, 
 or  in a  slightly different functional framework.  More details may be found in \cite{CBD3}.}  $d\geq2$ and consider a system of type 
 \eqref{GEep}  for some $\ep>0.$ Let  the structure hypotheses listed below \eqref{eq:ZZep}
  be  in force.  There exists a positive constant $\alpha$ 
 (independent of $\ep$) such that  for any initial data $\wt N_0\in\dot B^{\frac d2}_{2,1}$  for \eqref{eq:limit} 
  and  $\wt Z^\ep_0 \in \dot B^{\frac d2}_{2,1}\cap\dot B^{\frac d2+1}_{2,1}$  for \eqref{GEep} satisfying
  \begin{eqnarray}\label{eq:smallN0}&\|\wt N_0\|_{\dot B^{\frac d2}_{2,1}} \leq \alpha, \\\label{eq:smallZ0}
&\cZ_0^\ep\triangleq\|\wt Z^\ep_{0,1}\|^{\ell,\varepsilon^{-1}}_{\dot{{B}}^{\frac{d}{2}}_{2,1}}+\varepsilon\|\wt  Z^\ep_{0,2}\|^{\ell,\varepsilon^{-1}}_{\dot{{B}}^{\frac{d}{2}}_{2,1}}+ \varepsilon\|\wt  Z^\ep_{0,1}\|^{h,\varepsilon^{-1}}_{\dot{{B}}^{\frac{d}{2}+1}_{2,1}}
+\varepsilon^2\|\wt Z^\ep_{0,2}\|^{h,\varepsilon^{-1}}_{\dot{{B}}^{\frac{d}{2}+1}_{2,1}} \leq \alpha,\end{eqnarray}
  System \eqref{eq:limit} admits a unique solution 
   $\wt N$ in the space
$$\cC_b(\R_+;\dot B^{\frac d2}_{2,1})\cap L^1(\R_+;\dot B^{\frac d2+2}_{2,1}),$$
satisfying for all $t\geq0,$
\begin{equation}\label{eq:wtN}\|\wt N(t)\|_{\dot B^{\frac d2}_{2,1}} +\int_0^t\|\wt N\|_{\dot B^{\frac d2+2}_{2,1}}\,d\tau\leq C\|\wt N_0\|_{\dot B^{\frac d2}_{2,1}},\end{equation}
and System \eqref{GEep}  has a unique  global-in-time solution  $\wt Z^\ep$ 
in $\cC(\R_+;\dot B^{\frac d2}_{2,1}\cap\dot B^{\frac d2+1}_{2,1})$ 
such that
\begin{multline}\label{eq:wtZep}
\|\wt Z^\ep_1\|^{\ell,\varepsilon^{-1}}_{L^\infty(\R_+;\dot{{B}}^{\frac{d}{2}}_{2,1})}
+\varepsilon\|\wt Z^\ep_2\|^{\ell,\varepsilon^{-1}}_{L^\infty(\R_+;\dot{{B}}^{\frac{d}{2}}_{2,1})}
+\varepsilon\|\wt Z^\ep_1\|^{h,\varepsilon^{-1}}_{L^\infty(\R_+;\dot{{B}}^{\frac{d}{2}+1}_{2,1})}\\
+\varepsilon^2\|\wt Z^\ep_2\|^{h,\varepsilon^{-1}}_{L^\infty(\R_+;\dot{{B}}^{\frac{d}{2}+1}_{2,1})}
+\|\wt Z^\ep_1\|^{\ell,\varepsilon^{-1}}_{L^1(\R_+;\dot{{B}}^{\frac{d}{2}+2}_{2,1})}
+\varepsilon^{-1}\|\wt Z^\ep_1\|^{h,\varepsilon^{-1}}_{L^1(\R_+;\dot{{B}}^{\frac{d}{2}+1}_{2,1})}
+\|\wt Z^\ep_2\|_{L^1(\R_+;\dot{{B}}^{\frac{d}{2}+1}_{2,1})}
\\+\|\wt Z^\ep_2\|^{\ell,\varepsilon^{-1}}_{L^2(\R_+;\dot{{B}}^{\frac{d}{2}}_{2,1})}
+\varepsilon^{-1}\|\wt W^\ep\|_{L^1(\R_+;\dot{{B}}^{\frac{d}{2}}_{2,1})}\leq C\cZ_0^\ep,\end{multline}
where  $\wt W^\ep$ has been defined in \eqref{eq:Wtilde}.
\medbreak
  If, in addition,   $$\|\wt N_0-\wt Z^\ep_{1,0}\|_{\dot B^{\frac d2-1}_{2,1}}\leq C\ep,$$ then we have
 \begin{equation}\label{eq:wtZ2}  \|\wt N-\wt Z^\ep_1\|_{L^\infty(\R_+;\dot B^{\frac d2-1}_{2,1})}+
   \|\wt N-\wt Z^\ep_1\|_{L^1(\R_+;\dot B^{\frac d2+1}_{2,1})}\leq C\ep.\end{equation}
 \end{thm}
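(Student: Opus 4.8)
The plan is to prove Theorem~\ref{th:relax} in three stages: first the uniform (in $\ep$) well-posedness of the rescaled system \eqref{GEep}, then the well-posedness of the limit parabolic equation \eqref{eq:limit}, and finally the quantitative convergence \eqref{eq:wtZ2}. For the first stage, note that \eqref{GEep} is, up to the rescaling \eqref{eq:rescaling}, exactly System \eqref{GE}: indeed, if $Z^\ep(t,x)\triangleq (\wt Z_1^\ep, \ep\wt Z_2^\ep)(\ep t, x)$ is ran through the diffusive scaling \eqref{eq:relaxZ} one recovers \eqref{GE} for $(\wt Z_1,\wt Z_2)$. Hence Theorem~\ref{Thmd2} applies once the smallness of the data is checked, and the estimate \eqref{eq:wtZep} is obtained from \eqref{eq:Y} by tracking powers of $\ep$ through the norm equivalences \eqref{eq:inv1}--\eqref{eq:inv2} (the threshold $\ep^{-1}$ in the hybrid norms arising from the spatial rescaling). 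In particular, the smallness assumption \eqref{eq:smallZ0} is precisely the translation of \eqref{eq:smalldatabis} under this scaling, and the extra bound $\ep^{-1}\|\wt W^\ep\|_{L^1(\dot B^{d/2}_{2,1})}\leq C\cZ_0^\ep$ comes from the damped-mode estimate (the analogue of \eqref{eq:West}, itself controlling $\|\d_t Z_2\|$), rescaled: $\wt W^\ep$ is essentially $\ep^{-1}\d_\tau\wt Z_2^\ep$, consistent with \eqref{eq:dtW}.

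For the second stage, I would solve \eqref{eq:limit} by a standard fixed-point/energy argument in $\cC_b(\R_+;\dot B^{d/2}_{2,1})\cap L^1(\R_+;\dot B^{d/2+2}_{2,1})$. The linear part $\d_\tau + \cA$ with $\cA$ as in \eqref{def:A} is, by Lemma~\ref{l:elliptic} (invoked in the text precisely for this purpose, using Condition~(SK) and $\bar A^k_{11}=0$), a second-order \emph{parabolic} operator, so $e^{-\tau\cA}$ enjoys the usual maximal-regularity / smoothing estimate $\|e^{-\tau\cA}\ddj z\|_{L^2}\lesssim e^{-c2^{2j}\tau}\|\ddj z\|_{L^2}$ (the analogue of \eqref{eq:boundALp} with $p=2$). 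The quadratic and cubic terms $Q_1,Q_2,T_1,T_2$ in \eqref{eq:wtZ1} involve at worst $\nabla^2\wt N$ multiplied by lower-order factors, and the product laws \eqref{eq:num}, \eqref{eq:num2} together with the composition estimate \eqref{eq:compo} show they map the solution space into $L^1(\R_+;\dot B^{d/2}_{2,1})$, with a gain coming from smallness in $\dot B^{d/2}_{2,1}$. A contraction-mapping argument then yields the unique global solution $\wt N$ satisfying \eqref{eq:wtN}, provided $\alpha$ in \eqref{eq:smallN0} is small enough.

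The third and most delicate stage is the convergence \eqref{eq:wtZ2}. I would set $\delta N\triangleq \wt N-\wt Z^\ep_1$ and subtract \eqref{eq:limit} from \eqref{eq:wtZ1}. This gives
\begin{equation*}
\d_\tau \delta N + \cA\,\delta N = \bigl(\text{bilinear/trilinear differences in }\wt N,\wt Z^\ep_1\bigr) + S^\ep,
\end{equation*}
where $S^\ep$ is the source \eqref{def:S} built from $\wt W^\ep$. The difference terms are estimated by the usual "telescoping" trick, each factor bounded in the solution space and one difference factor $\delta N$ carried at regularity $\dot B^{d/2-1}_{2,1}$; by parabolic maximal regularity for $\cA$ one then closes a Gronwall-type inequality for $\|\delta N\|_{L^\infty(\dot B^{d/2-1}_{2,1})}+\|\delta N\|_{L^1(\dot B^{d/2+1}_{2,1})}$, using smallness of both $\wt N$ and $\wt Z^\ep_1$ to absorb the quadratic contributions. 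The source $S^\ep$ is controlled by $\|S^\ep\|_{L^1(\dot B^{d/2-1}_{2,1})}\lesssim (1+\|\wt Z^\ep_1\|_{L^\infty(\dot B^{d/2}_{2,1})})\,\|\wt W^\ep\|_{L^1(\dot B^{d/2}_{2,1})}\lesssim \ep$, by the rescaled damped-mode bound in \eqref{eq:wtZep}; together with the hypothesis $\|\wt N_0-\wt Z^\ep_{1,0}\|_{\dot B^{d/2-1}_{2,1}}\leq C\ep$ this gives \eqref{eq:wtZ2}.

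The main obstacle I anticipate is the third stage, and specifically keeping the $\ep$-dependence honest: one must verify that $\wt W^\ep=\wt Z_2^\ep + L_2^{-1}\sum_k(\bar A^k_{21}+\wt A^k_{21}(\wt Z^\ep_1))\d_k\wt Z^\ep_1$ is $O(\ep)$ in $L^1(\R_+;\dot B^{d/2}_{2,1})$ \emph{uniformly} in $\ep$, which requires the damped-mode analysis of Section~\ref{s:2} to survive the high-frequency cutoff at $2^j\sim\ep^{-1}$ with constants independent of $\ep$ — exactly the point of working with the $\ep$-weighted hybrid norms in \eqref{eq:wtZep}. A secondary subtlety is that $\delta N$ lives one derivative below the natural energy space for \eqref{eq:limit}, so the parabolic smoothing of $\cA$ must be used to recover the $L^1(\R_+;\dot B^{d/2+1}_{2,1})$ control of $\delta N$; the low regularity is what forces $d\geq 2$ (so that \eqref{eq:num} holds), consistent with the hypothesis of the theorem.
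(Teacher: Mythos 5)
Your proposal follows essentially the same route as the paper: global existence and the uniform bound \eqref{eq:wtZep} via the rescaling \eqref{eq:scaleZ} and Theorem \ref{Thmd2} (with the norm equivalences \eqref{eq:inv1}--\eqref{eq:inv2}), well-posedness of \eqref{eq:limit} by parabolic maximal regularity for the elliptic operator $\cA$ (Lemma \ref{l:elliptic} and Proposition \ref{p:heat}) plus product laws and a contraction argument, and finally the estimate of $\wt N-\wt Z_1^\ep$ by subtracting the two equations, with the source $S$ controlled through $\|\wt W^\ep\|_{L^1(\R_+;\dot B^{d/2}_{2,1})}=O(\ep)$. The one point treated more carefully in the paper that you only gesture at is that the rescaled damped mode from \eqref{eq:WWW} is \eqref{eq:Z2L2}, which differs from $\wt W^\ep$ of \eqref{eq:Wtilde} by an extra term $\ep L_2^{-1}\sum_k(\bar A^k_{22}+\wt A^k_{22})\d_k\wt Z_2$ that must itself be shown to be $O(\ep)$ in $L^1(\R_+;\dot B^{d/2}_{2,1})$; this is a routine consequence of \eqref{eq:wtZep} and does not affect the validity of your argument.
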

 \begin{proof}
That  \eqref{GEep} supplemented with initial data $\wt Z_0$ admits a unique global solution 
satisfying \eqref{eq:wtZep} follows from Theorem \ref{Thmd2} after suitable rescaling. 
Indeed, if we make  the  change of unknowns:
\begin{equation}\label{eq:scaleZ}(\wt Z_1,\wt Z_2)(\tau,x)=(\check{Z_1},\dfrac{\check{Z_2}}{\varepsilon})\left(\dfrac{\tau}{\varepsilon^2},\dfrac{x}{\varepsilon}\right), \end{equation}
then we discover that   $\wt Z$ satisfies \eqref{GEep} if and only if  $\check Z$ is a solution to    \eqref{GE}. 
Then, taking advantage of the equivalence of norms pointed out in \eqref{eq:inv1} and \eqref{eq:inv2}
gives the desired global existence  result and \eqref{eq:wtZep} up to the last term since 
defining  the damped mode  as in \eqref{eq:WWW} would lead to the function
\begin{equation}\label{eq:Z2L2}
\wt Z_2+L_2^{-1}\sum_{k=1}^d (\bar A_{21}^k + \wt A^k_{21}(\wt Z_1))\d_k\wt Z_1
+\ep L_2^{-1}\sum_{k=1}^d\bigl(\bar A_{22}^k + \wt A^k_{22}(\wt Z_1,\ep\wt Z_2)\bigr)\d_k\wt Z_2.\end{equation}
However, combining Inequality \eqref{eq:wtZep} (without the last term of course)
with  \eqref{eq:comparaison} ensures that 
$$\|\wt Z_1\|_{L^\infty(\R_+;\dot B^{\frac d2}_{2,1})} + \ep\|\wt Z_2\|_{L^\infty(\R_+;\dot B^{\frac d2}_{2,1})} +
\|\nabla \wt Z_2\|_{L^1(\R_+;\dot B^{\frac d2}_{2,1})}\lesssim \cZ_0^\ep.$$
Hence  the last term of \eqref{eq:Z2L2} is of order $\ep$ in $L^1(\R_+;\dot B^{\frac d2}_{2,1}),$
and $\wt W$ does satisfy \eqref{eq:wtZep}.

In order to prove the convergence of $\wt Z_1$ to $\wt N,$ 
let us first verify that  $S$  defined in  \eqref{def:S} is of order $\ep$ in $L^1(\R_+;\dot B^{\frac d2-1}_{2,1}).$ 
As $d\geq2,$  it is just a matter of  taking  advantage of the product law 
 \eqref{eq:num} to get
$$\|S\|_{\dot B^{\frac d2-1}_{2,1}} \lesssim  
\bigl(1+\|\wt Z_1\|_{\dot B^{\frac d2}_{2,1}}\bigr)\|\nabla \wt W\|_{\dot B^{\frac d2-1}_{2,1}} 
+  \|\wt W\|_{\dot B^{\frac d2}_{2,1}}\|\nabla \wt Z_1\|_{\dot B^{\frac d2-1}_{2,1}}.$$
Hence, 
$$\|S\|_{L^1(\R_+;\dot B^{\frac d2-1}_{2,1})}\lesssim \bigl(1+\|\wt Z_1\|_{L^\infty(\R_+;\dot B^{\frac d2}_{2,1})}\bigr)
\|\wt W\|_{L^1(\R_+;\dot B^{\frac d2}_{2,1})}$$
and using \eqref{eq:wtZep}   and the smallness of the initial data  thus yields
\begin{equation}\label{eq:S}\|S\|_{L^1(\R_+;\dot B^{\frac d2-1}_{2,1})}\leq C\ep\alpha.\end{equation}
Let us next briefly justify that any data $\wt N_0$ satisfying
\eqref{eq:smallN0} gives rise to a unique global solution  $\wt N$ of \eqref{eq:limit} in 
$\cC_b(\R_+;\dot B^{\frac d2}_{2,1})\cap L^1(\R_+;\dot B^{\frac d2+2}_{2,1})$
satisfying \eqref{eq:wtN}.
In fact, since the operator $\cA$ is strongly elliptic, 
the  parabolic estimates in Besov spaces with last index $1$
recalled in  Proposition \ref{p:heat} ensure that any smooth enough  global solution
$\wt N$ satisfies   for all $t\geq0,$
$$\displaylines{\|\wt N(t)\|_{\dot B^{\frac d2}_{2,1}} +\int_0^t\|\wt N\|_{\dot B^{\frac d2+2}_{2,1}}\,d\tau
\lesssim \|\wt N_0\|_{\dot B^{\frac d2}_{2,1}} +\int_0^t
\Bigl(\| Q_1(\wt N,\nabla^2\wt N)\|_{\dot B^{\frac d2}_{2,1}} \hfill\cr\hfill
+ \|Q_2(\nabla \wt N,\nabla \wt N)\|_{\dot B^{\frac d2}_{2,1}}+\|T_1(\wt N,\nabla \wt N,\nabla \wt N)\|_{\dot B^{\frac d2}_{2,1}}+\|T_2(\wt N,\wt N,\nabla^2 \wt N)\|_{\dot B^{\frac d2}_{2,1}}\Bigr)\,d\tau.}$$
Using the stability of the space $\dot B^{\frac d2}_{2,1}$ by product and
an obvious interpolation inequality, the nonlinear
terms may be estimated as follows: 
$$
\begin{aligned}
\| Q_1(\wt N,\nabla^2\wt N)\|_{\dot B^{\frac d2}_{2,1}}&\lesssim 
\|\wt N\|_{\dot B^{\frac d2}_{2,1}}\|\nabla^2\wt N\|_{\dot B^{\frac d2}_{2,1}}
\lesssim \|\wt N\|_{\dot B^{\frac d2}_{2,1}}\|\wt N\|_{\dot B^{\frac d2+2}_{2,1}},\\
\| Q_2(\nabla \wt N,\nabla \wt N)\|_{\dot B^{\frac d2}_{2,1}}&\lesssim \|\nabla \wt N\|_{\dot B^{\frac d2}_{2,1}}^2
\lesssim \|\wt N\|_{\dot B^{\frac d2}_{2,1}}\|\wt N\|_{\dot B^{\frac d2+2}_{2,1}},\\
\|T_1(\wt N,\nabla \wt N,\nabla \wt N)\|_{\dot B^{\frac d2}_{2,1}}&\lesssim 
 \|\wt N\|_{\dot B^{\frac d2}_{2,1}} \|\nabla \wt N\|_{\dot B^{\frac d2}_{2,1}}^2
\lesssim \|\wt N\|_{\dot B^{\frac d2}_{2,1}}^2\|\wt N\|_{\dot B^{\frac d2+2}_{2,1}},\\
\|T_2(\wt N, \wt N,\nabla^2\wt N)\|_{\dot B^{\frac d2}_{2,1}}&\lesssim 
 \|\wt N\|_{\dot B^{\frac d2}_{2,1}}^2 \|\nabla^2\wt N\|_{\dot B^{\frac d2}_{2,1}}
\lesssim \|\wt N\|_{\dot B^{\frac d2}_{2,1}}^2\|\wt N\|_{\dot B^{\frac d2+2}_{2,1}}.
\end{aligned}$$
Hence, we have for all $t\geq0,$ 
$$\|\wt N(t)\|_{\dot B^{\frac d2}_{2,1}} +\int_0^t\|\wt N\|_{\dot B^{\frac d2+2}_{2,1}}\,d\tau
\lesssim \|\wt N_0\|_{\dot B^{\frac d2}_{2,1}} +\int_0^t\bigl(1+ \|\wt N\|_{\dot B^{\frac d2}_{2,1}}\bigr) \|\wt N\|_{\dot B^{\frac d2}_{2,1}}
\|\wt N\|_{\dot B^{\frac d2+2}_{2,1}}\,d\tau.
$$
Clearly, if the solution is small enough (which is  ensured
if the initial data is small) then, the last term of the right-hand side may be absorbed by the
left-hand side, leading to  Inequality  \eqref{eq:wtN}.
The above formal inequalities combined with   a suitable contracting mapping argument 
(in the spirit of the one that is used e.g. for solving the incompressible Navier-Stokes equations, see details in \cite[Chap. 5]{BCD}), 
allow to conclude to the global existence of a solution to \eqref{eq:limit}, fulfilling the desired properties. 
\smallbreak
To finish  the proof of Theorem \ref{th:relax},  we just have to compare  $\wt Z_1$    with $\wt N.$ 
To proceed, let us subtract  \eqref{eq:limit} from \eqref{eq:wtZ1}. We get 
the following equation for $\dN\triangleq \wt Z_1-\wt N$: 
$$\displaylines{
\d_\tau\dN+\cA\dN=S-Q_1(\wt Z_1,\nabla^2\dN)- Q_1(\dN,\nabla^2\wt N)
-Q_2(\nabla\wt Z_1,\nabla\dN)-Q_2(\nabla\dN,\nabla \wt N)\hfill\cr\hfill
-T_1(\dN,\nabla \wt Z_1,\nabla \wt Z_1)-T_1(\wt N,\nabla\dN,\nabla \wt Z_1)-T_1(\wt N,\nabla \wt N,\nabla\dN)\hfill\cr\hfill
-T_2(\dN,\wt Z_1,\nabla^2\wt Z_1)-T_2(\wt N,\dN,\nabla^2 \wt Z_1)-T_2(\wt N,\wt N,\nabla^2\dN).}$$
Hence, by virtue of  Proposition \ref{p:heat}, we have for all $t\geq0,$ 
$$\displaylines{\|\dN\|_{L_t^\infty(\dot B^{\frac d2-1}_{2,1})\cap L_t^1(\dot B^{\frac d2+1}_{2,1})} \lesssim 
\|\dN(0)\|_{\dot B^{\frac d2-1}_{2,1}}+\|S\|_{L_t^1(\dot B^{\frac d2-1}_{2,1})}+
\|Q_1(\wt Z_1,\nabla^2\dN)\|_{L_t^1(\dot B^{\frac d2-1}_{2,1})}\hfill\cr\hfill
+\|Q_1(\dN,\nabla^2\wt N)\|_{L_t^1(\dot B^{\frac d2-1}_{2,1})}
+\|Q_2(\nabla\wt Z_1,\nabla\dN)\|_{L_t^1(\dot B^{\frac d2-1}_{2,1})}
+\|Q_2(\nabla\dN,\nabla \wt N)\|_{L_t^1(\dot B^{\frac d2-1}_{2,1})}\hfill\cr\hfill
+ \|T_1(\dN,\nabla \wt Z_1,\nabla \wt Z_1)\|_{L_t^1(\dot B^{\frac d2-1}_{2,1})}
+\|T_1(\wt N,\nabla\dN,\nabla \wt Z_1)\|_{L_t^1(\dot B^{\frac d2-1}_{2,1})}
+\|T_1(\wt N,\nabla \wt N,\nabla\dN)\|_{L_t^1(\dot B^{\frac d2-1}_{2,1})}
\hfill\cr\hfill
+ \|T_2(\dN,\wt Z_1,\nabla^2\wt Z_1)\|_{L_t^1(\dot B^{\frac d2-1}_{2,1})}
+\|T_2(\wt N,\dN,\nabla^2\wt Z_1)\|_{L_t^1(\dot B^{\frac d2-1}_{2,1})}
+\|T_2(\wt N,\wt N,\nabla^2\dN)\|_{L_t^1(\dot B^{\frac d2-1}_{2,1})}.}$$
So, using  \eqref{eq:num}, the stability of  $\dot B^{\frac d2}_{2,1}$ by product,   \eqref{eq:S}, 
\eqref{eq:smallN0} and \eqref{eq:wtN}, we find that  
$$\begin{aligned}
\|\dN\|_{L_t^\infty(\dot B^{\frac d2-1}_{2,1})\cap L_t^1(\dot B^{\frac d2+1}_{2,1})} &\lesssim 
 \|\dN(0)\|_{\dot B^{\frac d2-1}_{2,1}}+\|\wt S\|_{L_t^1(\dot B^{\frac d2-1}_{2,1})}\\
  +\bigl((1&+\|(\wt N,\wt Z_1)\|_{L_t^\infty(\dot B^{\frac d2}_{2,1})})
  \|\wt N\|_{L_t^\infty(\dot B^{\frac d2}_{2,1})}
  +  \|\wt Z_1\|_{L_t^\infty(\dot B^{\frac d2}_{2,1})}\bigr)\|\nabla\dN\|_{L_t^1(\dot B^{\frac d2}_{2,1})}
\\&\qquad+\|(\wt N,\wt Z_1)\|_{L_t^\infty(\dot B^{\frac d2}_{2,1})}\|\nabla^2\wt Z_1\|_{L_t^2(\dot B^{\frac d2}_{2,1})}
\|\dN\|_{L_t^2(\dot B^{\frac d2}_{2,1})}\\
&\qquad+ \bigl(\|\nabla^2\wt N\|_{L_t^1(\dot B^{\frac d2}_{2,1})} +\|\nabla\wt Z_1\|_{L_t^2(\dot B^{\frac d2}_{2,1})}^2\bigr)
\|\dN\|_{L_t^\infty(\dot B^{\frac d2-1}_{2,1})}\\
&\lesssim \|\dN(0)\|_{\dot B^{\frac d2-1}_{2,1}}+\alpha\ep
+(\alpha+\alpha^2)\|\dN\|_{L_t^1(\dot B^{\frac d2+1}_{2,1})\cap L_t^\infty(\dot B^{\frac d2-1}_{2,1})}.
\end{aligned}
$$ 
Hence, as $\alpha$ is small enough, we get:
\begin{equation}\label{eq:convdN}
\|\dN\|_{L_t^\infty(\dot B^{\frac d2-1}_{2,1})\cap L_t^1(\dot B^{\frac d2+1}_{2,1})}\lesssim
 \|\dN(0)\|_{\dot B^{\frac d2-1}_{2,1}}+\alpha \ep  \quad\hbox{for all }\ t\geq0,\end{equation}
 which completes the proof of the theorem. 
 \end{proof}
 
 We end this section with a few remarks.  The first one is that, for small $\ep,$ 
   it is natural to modify the definition in \eqref{eq:Wtilde} so as to have
 a damped mode that is  expressed in terms of $\wt Z_2$ and $\wt N.$ If we set
   \begin{equation}\label{def:checkW}\check W\triangleq  \wt Z_2 + L_2^{-1}\sum_{k=1}^d\bigl(\bar A^k_{21}+\wt A^k_{21}(\wt N)\bigr)\d_k \wt N,
    \end{equation}
  then we have 
$$
 \wt W-\check W= L_2^{-1}\sum_{k=1}^d\bigl(A_{21}^k(\wt N)\d_k\dN+ \wt A_{21}^k(\dN)\d_k\wt Z_1\bigr)\cdotp
$$
 In order to bound the right-hand side, one can observe that 
 $$\begin{aligned}
 \|A_{21}^k(\wt N)\d_k\dN\|_{L^1(\R_+;\dot B^{\frac d2}_{2,1})}
 &\lesssim \|\wt N\|_{L^\infty(\R_+;\dot B^{\frac d2}_{2,1})} \|\dN\|_{L^1(\R_+;\dot B^{\frac d2+1}_{2,1})},\\
  \|\wt A_{21}^k(\dN)\d_k\wt Z_1     \|_{L^1(\R_+;\dot B^{\frac d2}_{2,1})}
 &\lesssim \|\dN\|_{L^2(\R_+;\dot B^{\frac d2}_{2,1})} \|\d_k\wt Z_1\|_{L^2(\R_+;\dot B^{\frac d2}_{2,1})}.\end{aligned}$$
 Hence, taking advantage of  Inequalities \eqref{eq:wtN}, \eqref{eq:wtZep} and \eqref{eq:wtZ2}, 
 and of interpolation inequalities yields
 \begin{equation}\label{eq:WWcc}
 \| \wt W-\check W\|_{L^1(\R_+;\dot B^{\frac d2}_{2,1})}\leq C\alpha\bigl( \|\dN(0)\|_{\dot B^{\frac d2-1}_{2,1}}+\alpha \ep\bigr),
 \end{equation}
 which guarantees that $\check W$ satisfies  \eqref{eq:Wtilde}.
 \medbreak
 Note also that, since  $\wt Z_1$ is bounded in $\cC_b(\R_+;\dot B^{\frac d2}_{2,1})$ independently of $\ep$,
  using \eqref{eq:wtN} and \eqref{eq:wtZ2}, and  interpolating, one obtains
$$\|\wt N-\wt Z_1\|_{L^\infty(\R_+;\dot B^{\frac d2-\beta}_{2,1})}\leq C\ep^\beta,\qquad \beta\in(0,1).$$
Finally, observe that if we introduce the following rescaled solution of the limit system:
 $$
 N^\ep(t,x)\triangleq \wt N^\ep(\ep t,x),$$ 
 then  combining \eqref{eq:wtZ2} with the definition of $\wt Z_1^\ep$ in \eqref{eq:relaxZ} yields
 $$ Z_1^\ep= N^\ep + \cO(\ep)\quad\hbox{in}\quad L^\infty(\R_+;\dot B^{\frac d2-1}_{2,1}) $$
 which is, indeed, a more accurate information than what we had in Theorem  \ref{Thm3} or in \eqref{eq:Z1conv}.
 Similarly, putting  \eqref{eq:wtZ2} and \eqref{eq:WWcc} together yields the following expansion:
 $$ Z_2^\ep(t,x)=-\ep L_2^{-1}\sum_{k=1}^d\bigl(\bar A^k_{21}+\wt A^k_{21}(\wt N(\ep t,x))\bigr)\d_k\wt N(\ep t,x)
 + \cO(\ep)\quad \hbox{in}\quad L^1(\R_+;\dot B^{\frac d2}_{2,1}).$$


\appendix
\section{}

The following classical result (see the proof in e.g. the Appendix of \cite{CBD1})
has been used a number of times in this text. 
\begin{lem}\label{SimpliCarre}
Let $X : [0,T]\to \mathbb{R}_+$ be a continuous function such that $X^2$ is differentiable. Assume that there exists 
 a constant $c\geq 0$ and  a measurable function $A : [0,T]\to \mathbb{R}_+$ 
such that 
 $$\frac{1}{2}\frac{d}{dt}X^2+cX^2\leq AX\quad\hbox{a.e.  on }\ [0,T].$$ 
 Then, for all $t\in[0,T],$ we have
$$X(t)+c\int_0^tX(\tau)\,d\tau\leq X_0+\int_0^tA(\tau)\,d\tau.$$
\end{lem}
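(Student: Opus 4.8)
The statement to prove is Lemma A.1 (\texttt{SimpliCarre}): given a continuous $X:[0,T]\to\mathbb R_+$ with $X^2$ differentiable, a constant $c\ge 0$ and a measurable $A:[0,T]\to\mathbb R_+$ such that $\tfrac12\tfrac{d}{dt}X^2+cX^2\le AX$ a.e., one concludes $X(t)+c\int_0^t X(\tau)\,d\tau\le X_0+\int_0^t A(\tau)\,d\tau$.

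\medbreak

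The plan is to regularize $X$ by replacing it with $X_\delta\triangleq\sqrt{X^2+\delta^2}$ for $\delta>0$, so as to have a strictly positive, differentiable function to divide by, and then let $\delta\to0$ at the end. First I would note that $X_\delta$ is differentiable (since $X^2$ is and $X^2+\delta^2>0$), with $X_\delta' = \frac{(X^2)'}{2X_\delta}$, and that $X\le X_\delta$ while $X_\delta\le X+\delta$. From the hypothesis $\tfrac12(X^2)'+cX^2\le AX$ and the elementary bounds $X^2 = X_\delta^2-\delta^2\ge X_\delta^2 - \delta X_\delta$ (using $\delta^2\le\delta X_\delta$) and $AX\le AX_\delta$ (since $A\ge0$ and $X\le X_\delta$), I get
$$ X_\delta X_\delta' = \tfrac12(X^2)' \le AX - cX^2 \le AX_\delta - cX_\delta^2 + c\delta X_\delta \quad\text{a.e. on }[0,T].$$
Dividing by $X_\delta>0$ yields $X_\delta' + cX_\delta \le A + c\delta$ a.e. The key point is that $X_\delta'$ is the a.e. derivative of a function that is Lipschitz on $[0,T]$ (or at least absolutely continuous — this needs the brief remark that $X^2$ differentiable with, implicitly, a locally integrable derivative, which is the standing hypothesis in context, so $X_\delta$ is absolutely continuous), hence $X_\delta$ equals the integral of $X_\delta'$.

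\medbreak

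Next I would integrate the differential inequality $X_\delta'\le A + c\delta - cX_\delta$ over $[0,t]$. Since $X_\delta$ is absolutely continuous, $X_\delta(t)-X_\delta(0)=\int_0^t X_\delta'(\tau)\,d\tau \le \int_0^t A(\tau)\,d\tau + c\delta t - c\int_0^t X_\delta(\tau)\,d\tau$. Rearranging gives
$$ X_\delta(t) + c\int_0^t X_\delta(\tau)\,d\tau \le X_\delta(0) + \int_0^t A(\tau)\,d\tau + c\delta t.$$
Now let $\delta\to0^+$: $X_\delta(t)\to X(t)$ pointwise, $X_\delta(0)\to X(0)=X_0$, $c\delta t\to0$, and $\int_0^t X_\delta(\tau)\,d\tau\to\int_0^t X(\tau)\,d\tau$ by dominated convergence (dominated by $X+1$ on the compact interval, say, for $\delta\le1$). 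This gives exactly $X(t)+c\int_0^t X(\tau)\,d\tau\le X_0+\int_0^t A(\tau)\,d\tau$, as desired.

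\medbreak

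The main obstacle — really the only subtle point — is justifying the passage from the pointwise a.e. differential inequality on $X_\delta$ to the integrated inequality, i.e. confirming that $X_\delta$ is absolutely continuous so that the fundamental theorem of calculus applies. This is where the hypothesis that $X^2$ is differentiable is used in its full strength (together with the implicit integrability of $(X^2)'$, which holds in all the applications in the text, where $(X^2)' = 2XX'$ with $X$ continuous and $X'$ locally integrable); if one wanted to be fully rigorous under only "differentiable", one would invoke that a differentiable function with bounded derivative on a compact interval is Lipschitz, or restrict to the absolutely continuous case which is what actually occurs in practice. A clean alternative that sidesteps the regularization entirely is to apply the classical Gronwall/comparison argument directly: multiply $\tfrac12(X^2)'+cX^2\le AX$ by $e^{ct}$, but the division-by-$X$ issue resurfaces when $X$ vanishes, so the $X_\delta$-regularization is the cleanest route and I would present it as above.
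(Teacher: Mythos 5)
Your proof is correct and is essentially the argument the paper points to (the lemma is proved in the appendix of \cite{CBD1} by exactly this regularization $X_\delta\triangleq\sqrt{X^2+\delta^2}$, division by $X_\delta>0$, integration, and passage to the limit $\delta\to0$). Your explicit discussion of why the a.e.\ differential inequality for $X_\delta$ can be integrated is a fair acknowledgement of the one point usually left implicit.
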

We frequently took advantage of the fact that  applying derivatives or, more 
generally, Fourier multipliers on spectrally localized functions
is almost equivalent to multiplying by some constant depending only 
on the Fourier multiplier and on the spectral support. 

This is illustrated by  the classical Bernstein inequality  that states (see e.g. \cite[Chap. 2]{BCD}) that for any $R>0$ there exists
a constant $C$ such that for any $\lambda>0$ and any function $u:\R^d\to\R$  with Fourier transform $\wh u$
supported in the ball $B(0,R\lambda),$ we have
\begin{equation}\label{eq:bernstein1} 
\|\d^\alpha u\|_{L^q}\leq C^{1+|\alpha|} \lambda^{|\alpha|+d(\frac1p-\frac1q)}\|u\|_{L^p},\qquad
\alpha\in\N^d,\quad 1\leq p\leq q\leq\infty.
\end{equation}
The reverse Bernstein inequality asserts that, under the stronger assumption that $\wh u$ is 
supported in the annulus $\{x\in\R^d\,:\, r\lambda\leq |x|\leq R\lambda\}$ for some $0<r<R,$  then we have in addition, 
\begin{equation}\label{eq:bernstein2} 
\|u\|_{L^p}\leq C \lambda^{-1}\|\nabla u\|_{L^p},\qquad 1\leq p\leq\infty.
\end{equation}
A slight modification of the proof of \eqref{eq:bernstein1} allows to extend the result
to any smooth homogeneous multiplier : denoting by $M$ a smooth function on $\R^d\setminus\{0\}$
with homogeneity $\gamma,$ there exists a constant $C$ 
such that for any $\lambda>0$ and any function $u:\R^d\to\R$  with Fourier transform $\wh u$
supported   in the annulus $\{x\in\R^d\,:\, r\lambda\leq |x|\leq R\lambda\},$  we have
\begin{equation}\label{eq:bernstein3} 
\|M(D) u\|_{L^q}\leq C \lambda^{\gamma+d(\frac1p-\frac1q)}\|u\|_{L^p},\qquad
\alpha\in\N^d,\ 1\leq p\leq q\leq\infty.
\end{equation}

In the last section, in order to study the convergence to the limit system,  we used maximal regularity estimates in Besov spaces with last index $1$ for parabolic system. These 
estimates are well known for the heat equation (see e.g. \cite[Chap. 2]{BCD}). 
Below, we extend them   to 
semi-groups  generated by  strictly elliptic homogeneous multipliers in the following meaning:
we consider functions 
 $\cA\in \cC^\infty(\R^d\setminus\{0\};\cM_n(\C))$  homogeneous of degree $\gamma,$
 such that the matrix $\cA(\xi)$ is Hermitian and satisfies for some $c>0$:  
 \begin{equation}\label{eq:ellipticA}
  \bigl(\cA(\xi)z\cdot z\bigr)\geq c|\xi|^\gamma |z|^2,\qquad \xi\in\R^d\setminus\{0\},\quad z\in\C^n.\end{equation}


\begin{prop}\label{p:heat} Let $u\in \cC(\R_+;\cS')$ satisfy
\begin{equation} \label{eq:heateq1}
\left\{\begin{array}{ll} \d_tz + \cA(D)z= f\quad &\hbox{on }\ \R_+\times\R^d,\qquad\\[1ex]
z|_{t=0}=z_0\quad &\hbox{on }\ \R^d.\end{array}\right.\end{equation}
 Then, for any $p\in[1,\infty]$ and $s\in\R$ the following inequality holds true for all $t>0:$
\begin{equation}\label{eq:maxreg1}
\|z(t)\|_{\dot B^s_{p,1}}+\int_0^t\|z\|_{\dot B^{s+\gamma}_{p,1}}\,d\tau
\leq C\biggl(\|z_0\|_{\dot B^s_{p,1}}+\int_0^t\|f\|_{\dot B^s_{p,1}}\,d\tau\biggr)\cdotp
\end{equation}
\end{prop}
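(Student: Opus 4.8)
The plan is to reduce everything to the scalar heat equation by a Littlewood--Paley block-by-block analysis, exactly as in the linear section of the paper. First I would apply the dyadic operator $\ddj$ to \eqref{eq:heateq1}, setting $z_j\triangleq\ddj z$, $z_{0,j}\triangleq\ddj z_0$ and $f_j\triangleq\ddj f$, so that
$$\d_t z_j+\cA(D)z_j=f_j,\qquad z_j|_{t=0}=z_{0,j}.$$
The key point is a uniform (in $j$) semigroup bound: there exist constants $c,C>0$, depending only on the multiplier $\cA$ and on the annulus where $\varphi$ is supported, such that for every $j\in\Z$,
$$\|e^{-t\cA(D)}\ddj w\|_{L^p}\leq C e^{-c2^{\gamma j}t}\|\ddj w\|_{L^p},\qquad t\geq0.$$
This is the heart of the matter, and the step I expect to be the main obstacle. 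To prove it one writes $e^{-t\cA(D)}\ddj w=\mathcal F^{-1}\bigl(e^{-t\cA(\xi)}\varphi(2^{-j}\xi)\widehat w\bigr)$, and by the homogeneity one reduces (via the rescaling $\xi=2^j\eta$) to a fixed compact annulus $\{r\leq|\eta|\leq R\}$; there the ellipticity \eqref{eq:ellipticA} gives that the Hermitian matrix $\cA(\eta)$ has spectrum bounded below by $c>0$, whence $\|e^{-2^{\gamma j}t\,\cA(\eta)}\|\leq Ce^{-c2^{\gamma j}t}$ uniformly in $\eta$ on the annulus. One then checks that the (rescaled) kernel $\mathcal F^{-1}\bigl(e^{-\sigma\cA(\eta)}\varphi(\eta)\bigr)$ has $L^1$ norm bounded uniformly in $\sigma=2^{\gamma j}t\geq0$ — this follows by standard stationary-phase/integration-by-parts estimates on the smooth, compactly supported symbol, using that all derivatives in $\eta$ of $e^{-\sigma\cA(\eta)}$ stay bounded (with decay) uniformly in $\sigma$ — and concludes by Young's convolution inequality. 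An alternative, less computational route is to observe that on each dyadic block $\cA(D)$ acts (after rescaling) like a bounded operator whose numerical range sits in $\{\Re\lambda\geq c\}$, and invoke a functional-calculus / Hille--Yosida type bound for the generated semigroup; either way the uniform exponential decay \eqref{eq:boundALp}-style is what is needed, and it is already asserted in the paper for the special case $\cA$ of order $2$.

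With the block estimate in hand, the rest is routine. Using Duhamel's formula $z_j(t)=e^{-t\cA(D)}z_{0,j}+\int_0^t e^{-(t-\tau)\cA(D)}f_j(\tau)\,d\tau$ together with the semigroup bound and Bernstein's inequality \eqref{eq:bernstein3} (which lets us replace $\cA(D)$ acting on $z_j$ by a factor $\simeq 2^{\gamma j}$), I would obtain
$$\|z_j(t)\|_{L^p}+c\,2^{\gamma j}\int_0^t\|z_j\|_{L^p}\,d\tau\leq C\Bigl(\|z_{0,j}\|_{L^p}+\int_0^t\|f_j\|_{L^p}\,d\tau\Bigr),$$
by first writing $\|z_j(t)\|_{L^p}\leq Ce^{-c2^{\gamma j}t}\|z_{0,j}\|_{L^p}+C\int_0^t e^{-c2^{\gamma j}(t-\tau)}\|f_j(\tau)\|_{L^p}\,d\tau$ and then bounding $\int_0^t$ of this and multiplying the time integral by $2^{\gamma j}$, using $\int_0^t e^{-c2^{\gamma j}(t-\tau)}\,d\tau\leq (c2^{\gamma j})^{-1}$ and Fubini for the source term (here the last index $1$ plays no role yet; it is a pure $L^p$-in-space estimate for each fixed block).

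Finally I would multiply the block inequality by $2^{js}$ and sum over $j\in\Z$. Because the summation index is $1$ (Besov spaces $\dot B^s_{p,1}$), the sum and the time integral commute freely, so $\sum_j 2^{js}\int_0^t\|z_j\|_{L^p}\,d\tau=\int_0^t\|z\|_{\dot B^s_{p,1}}\,d\tau$ and $\sum_j 2^{j(s+\gamma)}\int_0^t\|z_j\|_{L^p}\,d\tau=\int_0^t\|z\|_{\dot B^{s+\gamma}_{p,1}}\,d\tau$; likewise for the data and source terms. This yields precisely \eqref{eq:maxreg1}. One should add a short remark that the computation is rigorous for $z$ smooth and decaying (so that all the dyadic pieces are well defined and the manipulations are legitimate), and that the general statement follows by the usual density/Fatou argument — the Fatou property of Besov spaces with last index $1$ passes the bound to the limit — exactly as is done elsewhere in the paper.
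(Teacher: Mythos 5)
Your proposal is correct and follows essentially the same route as the paper: dyadic localization, Duhamel's formula, the uniform block semigroup bound $\|\ddj e^{-t\cA(D)}w\|_{L^p}\leq Ce^{-c2^{\gamma j}t}\|\ddj w\|_{L^p}$ proved by rescaling to a fixed annulus and showing the kernel is in $L^1$ uniformly, and then weighted $\ell^1$ summation. The only step you assert rather than prove — that the $\eta$-derivatives of $e^{-\sigma\cA(\eta)}$ decay like $e^{-c_0\sigma}$ uniformly on the annulus — is exactly where the paper invokes the integral formula $D\,e^X\!\cdot H=\int_0^1 e^{(1-\tau)X}He^{\tau X}\,d\tau$ to handle the non-commutativity of the matrix-valued symbol with its derivatives; with that supplied, your argument is complete.
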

\begin{proof}
If $z$ satisfies \eqref{eq:heateq1}  then for any $j\in\Z,$ we have
$$
\d_tz_j + \cA(D)z_j= f_j \with z_j\triangleq \ddj z\andf f_j\triangleq \ddj f.
$$
Hence, according to Duhamel's formula, 
\begin{equation}\label{eq:zj}z_j(t)=e^{-t\cA(D)} z_{0,j}+\int_0^te^{-(t-\tau)\cA(D)}f_j(\tau)\,d\tau.\end{equation}
Let us provisionally  admit the following lemma:
\begin{lem}\label{l:sg}  
 There exist two constants $c_0$ and $C$ such that the following inequality 
holds for all $j\in\Z,$ $t\geq0$ and $p\in[1,\infty]$:
\begin{equation}\label{eq:ddjA}\|\ddj e^{t\cA(D)}z\|_{L^p}\leq C e^{-c_02^{\gamma j}t} \|\ddj z\|_{L^p}.
\end{equation}
\end{lem}
Then, plugging \eqref{eq:ddjA} in \eqref{eq:zj}  yields for all $j\in\Z,$
\begin{equation}\label{eq:heat2}
\|z_j(t)\|_{L^p}\lesssim e^{-c_02^{\gamma j}t}\|z_{0,j}\|_{L^p}
+\int_0^t e^{-c_02^{\gamma j}(t-\tau)}\|f_j(\tau)\|_{L^p}\,d\tau.
\end{equation}
Hence,  taking  the supremum norm on $[0,t]$ (resp. integrating on $[0,t]$), we get for all $j\in\Z,$
$$\|z_j\|_{L^\infty(0,t;L^p)} + 2^{\gamma j} \|z_j\|_{L^1(0,t;L^p)} 
\lesssim  \Bigl(1-e^{-c_02^{\gamma j}t}\Bigr)\Bigl(\|z_{0,j}\|_{L^p}+ \|f_j\|_{L^1(0,t;L^p)}\Bigr)\cdotp
$$
Just bounding the prefactor in the right-hand side by $1,$ multiplying the two sides by $2^{js}$
and summing up on $\Z$ yields the claimed inequality. 
 \end{proof}

\begin{proof}[Proof of Lemma \ref{l:sg}] Thanks to  the homogeneity of $\cA,$ using 
a suitable change of  variables reduces the proof to the case $j=0.$  Indeed, if we set $\zeta(x)\triangleq z(2^{-j}x),$ then we have $\dot\Delta_0 \zeta(2^jx)=\ddj z(x)$
and $$e^{-2^{\gamma j}\lambda\cA(D)}\dot\Delta_0\zeta(2^jx)=e^{-\lambda\cA(D)}\ddj z(x),\qquad \lambda\geq0.$$
Then, consider a  function~$\phi$
in~$\cD(\R^d\setminus\{0\})$ with value~$1$ on a neighborhood of
the support of $\varphi$ and write that
$$\begin{aligned}
e^{-t\cA(D)}\dot\Delta_0\zeta&=\cF^{-1}\left(\phi e^{-\lambda\cA(\cdot)}
\wh{\dot\Delta_0\zeta}\right)\\[1ex]
 & =  g_\lambda \star \dot\Delta_0u\with
 g_\lambda(x) \triangleq
(2\pi)^{-d} \Int_{\R^d}  e^{i\,x\cdot\xi}\phi(\xi)
e^{-\lambda\cA(\xi)}d\xi.\end{aligned}$$
If it is true that 
\begin{equation}\label{eq:g} 
\|g_\lambda\|_{L^1}\leq Ce^{-c_0\lambda}
\end{equation}
then  using that the convolution product maps  $L^1\star L^p$ to $L^p$ implies that    
$$
\|e^{-\lambda\cA(D)}\dot\Delta_0\zeta\|_{L^p}
\leq \|g_\lambda\|_{L^1}\|\dot\Delta_0\zeta\|_{L^p}
\leq C e^{-c_0\lambda}\|\dot\Delta_0\zeta\|_{L^p},
$$
and we get \eqref{eq:ddjA} after reverting to the original variables.
\medbreak
In order to prove \eqref{eq:g},   it suffices to establish that
$$|g_\lambda(x)|\leq C(1+|x|^2)^{-d} e^{-c_0\lambda},\qquad x\in\R^d,\quad\lambda>0.$$
Now,  integrating by parts, we get
$$(2\pi)^dg_\lambda(x)= (1+|x|^2)^{-d} h_\lambda(x)\with 
h_\lambda(x)\triangleq \int_{\R^d}   e^{i\,x\cdot\xi}\phi(\xi) (\Id-\Delta_\xi)^d\Bigl(e^{-\lambda\cA(\xi)}\Bigr)d\xi.$$
Of course, the integral may be restricted to $\Supp\phi$ which is a compact subset of $\R^d\setminus\{0\}.$
Owing to \eqref{eq:ellipticA}, on this subset, there exists a positive constant $c_0$ such that 
all the real parts of the eigenvalues of $\cA(\xi)$ are bounded from below by $2c_0.$
Now,  since the differential of the exponential map may be computed by the formula
$$D\, e^X\cdotp  H = \int_0^1 e^{(1-\tau)X} H e^{\tau X}\,d\tau,\qquad H\in\cM_n(\R),$$
 the chain rule entails that
 $$ D_\xi\Bigl(e^{-\lambda\cA(\xi)}\Bigr)\cdot H = -\lambda\int_0^1 e^{-\lambda(1-\tau)\cA(\xi)}
 \bigl(D_\xi\cA(\xi)\cdot H\bigr)e^{-\lambda\tau\cA(\xi)}\,d\tau.$$
 Hence,  there exist two  constants $C$ and $C'$ such that 
 $$ \biggl|D_\xi\Bigl(e^{-\lambda\cA(\xi)}\Bigr)\biggr|\leq C'\lambda e^{-2c_0\lambda}
 \leq C e^{-c_0\lambda},\qquad \lambda>0,\quad \xi\in\Supp\phi.$$
 By induction, one can get similar estimates for higher order derivatives
 of $\xi\mapsto e^{-\lambda\cA(\xi)},$ which eventually yields 
 $$|h_\lambda(x)|\leq C e^{-c_0\lambda},\qquad x\in\R^d,\quad\lambda>0,$$
 and completes the proof. 
\end{proof}

\begin{remark} In the case $p=2$ 
one can work out a shorter proof, based  on the Fourier-Plancherel theorem. However, it is interesting to point out 
that the very same result holds for any value of $p$ in $[1,\infty]$ \emph{including $1$ and $\infty$}, 
and with a constant independent of $p.$
\end{remark}

The following lemma ensures that in the setting of System \eqref{GE}, 
if both  Condition (SK) and $A^k_{11}(\bar V)=0$ for all $k\in\{1,\cdots,d\}$ are satisfied,  
then the second order differential operator $\cA$ defined in \eqref{def:A} 
is indeed strictly elliptic in the sense of Proposition \ref{p:heat}, with  $\gamma=2.$
\begin{lem}\label{l:elliptic}   Consider two  $n\times n$ Hermitian matrices $A$ and $B$ such that
\begin{equation}\label{eq:StructA}
A=\begin{pmatrix} 0 &A_{12}\\ A_{21} &A_{22}\end{pmatrix}\andf
B=\begin{pmatrix}0& 0\\ 0&B_{22}\end{pmatrix}\end{equation}
with $A_{12}\in\cM_{n_1,n_2}(\C),$ $A_{21}\in\cM_{n_2,n_1}(\C),$ $A_{22}\in\cM_{n_2,n_2}(\C)$
and $B_{22}\in\cM_{n_2,n_2}(\C).$ Suppose also that $B_{22}$ is positive.
Then, $B_{22}$ is invertible and  the following two properties are equivalent:
\begin{enumerate}
\item The matrix $A_{12}B_{22}^{-1}A_{21}$ is a $n_1\times n_1$ positive matrix.\smallbreak
\item Condition (SK) holds true (that is, the four equivalent conditions of Lemma \ref{l:SK} are satisfied).
\end{enumerate}
\end{lem}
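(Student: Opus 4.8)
The first observation is that $B_{22}$ is invertible: a positive Hermitian matrix has all eigenvalues with positive real part, hence no zero eigenvalue, so $B_{22}\in GL_{n_2}(\C)$ and $A_{12}B_{22}^{-1}A_{21}$ makes sense. The plan is then to prove the equivalence (1)$\iff$(2) by connecting both conditions to the same linear-algebraic object, namely the kernel of $B$ intersected with the eigenspaces of $A$, which is the formulation of (SK) available from Lemma \ref{l:SK}. Note that with the block structure \eqref{eq:StructA} one has $\ker B=\R^{n_1}\times\{0\}$ (more precisely $\C^{n_1}\times\{0\}$), so the condition in item (3) of Lemma \ref{l:SK} becomes: \emph{no nonzero vector of the form $\binom{x}{0}$ with $x\in\C^{n_1}$ is an eigenvector of $A$}.

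First I would unwind what it means for $\binom{x}{0}$ to be an eigenvector of $A$. Writing $A\binom{x}{0}=\binom{A_{12}\cdot 0}{A_{21}x}$... wait, more carefully: $A\binom{x}{0}=\binom{0\cdot x}{A_{21}x}=\binom{0}{A_{21}x}$ since the top-left block is $0$. For this to be a scalar multiple $\lambda\binom{x}{0}$ we need $A_{21}x=\lambda\cdot 0=0$ and $0=\lambda x$. If $x\neq0$ this forces $\lambda=0$ and $A_{21}x=0$. So (SK) fails (for this structure) if and only if there exists $x\neq0$ with $A_{21}x=0$, i.e.\ if and only if $A_{21}$ has nontrivial kernel, i.e.\ $\operatorname{rank}A_{21}<n_1$. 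Hence (SK) $\iff$ $A_{21}$ is injective $\iff$ $A_{12}=A_{21}^*$ (using that $A$ is Hermitian, so $A_{12}={}^t\!\bar A_{21}$) is surjective.

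Second, I would show that the positivity of $A_{12}B_{22}^{-1}A_{21}$ is equivalent to the same rank condition. Since $A$ is Hermitian, $A_{12}=A_{21}^*$. Because $B_{22}$ is positive Hermitian, $B_{22}^{-1}$ is positive Hermitian too, so it has a positive Hermitian square root $B_{22}^{-1/2}$; then $A_{12}B_{22}^{-1}A_{21}=A_{21}^*B_{22}^{-1}A_{21}=(B_{22}^{-1/2}A_{21})^*(B_{22}^{-1/2}A_{21})=C^*C$ with $C\triangleq B_{22}^{-1/2}A_{21}$. A matrix of the form $C^*C$ is always nonnegative Hermitian, and it is (strictly) positive definite if and only if $C$ is injective, which (as $B_{22}^{-1/2}$ is invertible) holds if and only if $A_{21}$ is injective, i.e.\ $\operatorname{rank}A_{21}=n_1$. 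Chaining this with the previous paragraph gives (1) $\iff$ $A_{21}$ injective $\iff$ (SK), which is the desired equivalence. One subtlety to state carefully: $B_{22}$ in the statement is ``positive'' but not assumed Hermitian-symmetric a priori in the application (it comes from $L_2$ with $L_2+{}^t L_2$ positive definite); however the lemma as stated does assume $A$ and $B$ Hermitian, so $B_{22}$ is Hermitian positive and the square-root argument applies verbatim.

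\textbf{Main obstacle.} There is essentially no deep obstacle here — the proof is elementary linear algebra. The one point requiring a little care is the bookkeeping with real versus complex vectors and with Hermitian adjoints (the paper's matrices are real in the nonlinear applications but the lemma is stated over $\C$); I would make sure the identification $A_{12}=A_{21}^*$ coming from ``$A$ Hermitian'' is used consistently, and that ``positive'' is read as ``positive definite Hermitian'' throughout (otherwise the square-root trick and the equivalence ``$C^*C>0\iff C$ injective'' need restating). A secondary point is to record explicitly, for use in Proposition \ref{p:heat} with $\gamma=2$, that positive definiteness of the constant Hermitian matrix $A_{12}B_{22}^{-1}A_{21}$ upgrades to the uniform ellipticity bound $\big(\cA(\xi)z\cdot z\big)\geq c|\xi|^2|z|^2$ by homogeneity and compactness of the unit sphere — but that is immediate once (1) is known.
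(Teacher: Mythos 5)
Your proof is correct. The difference from the paper is which of the equivalent formulations of (SK) from Lemma \ref{l:SK} you pivot on: you use item (3) (the eigenvector formulation), observing that for this block structure a nonzero vector $\binom{x}{0}\in\ker B$ is an eigenvector of $A$ exactly when $A_{21}x=0$, so that (SK) reduces immediately to the injectivity of $A_{21}$; the paper instead uses item (2) (the Kalman rank property) and needs a small induction on the powers $BA^k$ to show that the bottom-left block of each $BA^k$ factors through $A_{21}$, so that the full Kalman matrix has the same rank as $\left(\begin{smallmatrix}B\\BA\end{smallmatrix}\right)$, again reducing (SK) to ${\rm rank}\,A_{21}=n_1$. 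Both proofs then close the loop the same way, via $A_{12}B_{22}^{-1}A_{21}z\cdot z=B_{22}^{-1}A_{21}z\cdot A_{21}z$ (you package this as $C^*C$ with $C=B_{22}^{-1/2}A_{21}$, which is equivalent). Your route is shorter and avoids the induction; the paper's route has the incidental benefit of exhibiting explicitly that only the first power of $A$ contributes to the Kalman rank for this structure, a fact exploited elsewhere in the text when truncating the Lyapunov functional. Your closing remarks on Hermitian bookkeeping and on upgrading positivity of the constant matrix to the uniform ellipticity bound \eqref{eq:ellipticA} by homogeneity are both apt.
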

\begin{proof}
The invertibility of $B_{22}$ being  obvious, let us first
assume that $A_{12}B_{22}^{-1}A_{21}$ is  positive. Then, the rank of $A_{21}$ must be equal to $n_1$
and so does the rank of $B_{22} A_{21}.$ 
Now, we observe that
$$BA =\begin{pmatrix} 0&0\\ B_{22} A_{21}& B_{22}A_{22}\end{pmatrix}\cdotp$$
Hence, the rank of $\begin{pmatrix} B\\ BA\end{pmatrix}$ is equal to $n_1+n_2=n,$ and Condition (SK)
is thus satisfied.
\medbreak
Conversely, if $A$ has the special structure \eqref{eq:StructA} then an easy induction reveals that 
the bottom left  block    of  any positive power $k$ 
of $A$  ends with $A_{21}.$  The same property clearly holds for $BA^k$ that thus looks like 
$$
BA^k=\begin{pmatrix}0&0\\ B_{22} C_k A_{21}& D_k\end{pmatrix}\quad\hbox{for some }\ 
C_k, D_k\in\cM_{n_2}(\C).$$
Now, since $B_{22}$ is invertible, we have for all $k\in\N,$
$${\rm rank}\,(B_{22} C_k A_{21})\leq {\rm rank}\,(A_{21})= {\rm rank}\,(B_{22} A_{21}).$$
As the block at the bottom left of $BA$ is equal to  $B_{22} A_{21},$
one can conclude that, under assumption \eqref{eq:StructA} we automatically have 
$$
{\rm rank}\begin{pmatrix} B\\ BA\end{pmatrix}= {\rm rank}\left(\begin{smallmatrix} B\\BA\\\dots\\ BA^{n-1}\end{smallmatrix}\right)\cdotp$$
Hence, if we assume in addition that Condition (SK) is satisfied, then we must have 
$ {\rm rank}(B_{22}A_{21})= n_1,$ and thus  $ {\rm rank}(A_{21})= n_1,$ too. 
Now, since ${}^t\!\bar A_{12}=A_{21},$ we have for all $z\in\C^{n_1},$
$$A_{12}B_{22}^{-1}A_{21}z\cdot z = B_{22}^{-1}A_{21}z\cdot A_{21}z.$$
As $B_{22}^{-1}$ is  positive, the right-hand side
is nonnegative and  vanishes if and only if
$A_{21}z=0$ and thus if and only if $z=0$ since  $ {\rm rank}(A_{21})= n_1.$
Hence  $A_{12}B_{22}^{-1}A_{21}$ is positive, which completes the proof.  
\end{proof}


\end{document}